\newtheorem{mytheorem}{Theorem}
\newtheorem{mylem}[mytheorem]{Lemma}
\newtheorem{mydef}[mytheorem]{Definition}
\newtheorem{remark}[mytheorem]{Remark}
\numberwithin{mytheorem}{section}
\numberwithin{equation}{section}
\newcommand{\uproman}[1]{\uppercase\expandafter{\romannumeral#1}}
\DeclareMathOperator\supp{spt}
\DeclareMathOperator\divv{div}
\DeclareMathOperator\diam{diam}
\newcommand{\bigchi}{\scalebox{1.3}{$\chi$}}
\def\Xint#1{\mathchoice
{\XXint\displaystyle\textstyle{#1}}
{\XXint\textstyle\scriptstyle{#1}}
{\XXint\scriptstyle\scriptscriptstyle{#1}}
{\XXint\scriptscriptstyle\scriptscriptstyle{#1}}
\!\int}
\def\XXint#1#2#3{{\setbox0=\hbox{$#1{#2#3}{\int}$ }
\vcenter{\hbox{$#2#3$ }}\kern-.54\wd0}}
\DeclareRobustCommand*{\bfseries}{%
  \not@math@alphabet\bfseries\mathbf
  \fontseries\bfdefault\selectfont
  \boldmath
}
\newcommand{\foo}[1]{\mathbf{#1}}
\newtheorem{myproposition}[mytheorem]{Proposition}
\numberwithin{mytheorem}{section}
\numberwithin{equation}{section}
\def\Yint#1{\mathchoice
    {\YYint\displaystyle\textstyle{#1}}%
    {\YYint\textstyle\scriptstyle{#1}}%
    {\YYint\scriptstyle\scriptscriptstyle{#1}}%
    {\YYint\scriptscriptstyle\scriptscriptstyle{#1}}%
      \!\iint}
\def\YYint#1#2#3{{\setbox0=\hbox{$#1{#2#3}{\iint}$}
    \vcenter{\hbox{$#2#3$}}\kern-.51\wd0}}
\def\longdash{{-}\mkern-3.5mu{-}} 
\def\fiint{\Yint\longdash}
\def\Xint#1{\mathchoice
{\XXint\displaystyle\textstyle{#1}}%
{\XXint\textstyle\scriptstyle{#1}}%
{\XXint\scriptstyle\scriptscriptstyle{#1}}%
{\XXint\scriptscriptstyle\scriptscriptstyle{#1}}%
\!\int}
\def\XXint#1#2#3{{\setbox0=\hbox{$#1{#2#3}{\int}$ }
\vcenter{\hbox{$#2#3$ }}\kern-0.555\wd0}}
\def\fint{\Xint-}
\DeclareMathOperator\dist{dist}
\renewcommand{\d}{\mathrm{d}}
\newcommand{\dx}{\mathrm{d}x}
\newcommand{\dr}{\mathrm{d}r}
\newcommand{\dt}{\mathrm{d}t}
\newcommand{\ds}{\mathrm{d}s}
\newcommand{\dtau}{\mathrm{d}\tau}
\newcommand{\R}{\mathbb{R}}
\newcommand{\N}{\mathbb{N}}
\renewcommand{\epsilon}{\varepsilon}
\newcommand{\critical}{\frac{2n}{n+2}}
\renewcommand{\d}{\mathrm{d}}
\newcommand{\G}{\mathcal{G}}
\newcommand{\Q}{\mathcal{Q}}
\renewcommand{\epsilon}{\varepsilon}
\subjclass[2020]{35B65, 35B45, 35K55, 35K65}
\keywords{Parabolic systems, Boundary regularity,  $(p,q)$-growth conditions, Weak solutions, Gradient regularity}
\begin{document}

\title[Boundary regularity for parabolic systems]{Boundary regularity for parabolic systems with nonstandard \lowercase{\texorpdfstring{$(p,q)$}{}}-growth conditions in smooth convex domains}
\date{\today}

%%%%%%%%%%%%%%%%%%%%%%%%%%

\author[M. Strunk]{Michael Strunk}
\address{Michael Strunk \\
Fachbereich Mathematik, Universit\"at Salzburg \\
Hellbrunner Str. 34, 5020 Salzburg, Austria}
\email{michael.strunk@plus.ac.at}

%%%%%%%%%%%%%%%%%%%%%%%%%%

\begin{abstract}
We study the boundary regularity of local weak solutions to nonlinear parabolic systems of the form
\begin{equation*}
    \partial_t u^i - \mathrm{div} \big( a(|Du|) Du^i \big)= f^i, \qquad i=1,\dots,N,
\end{equation*}
in a space-time cylinder $\Omega_T = \Omega \times (0,T)$, where $\Omega \subset \mathbb{R}^n$ ($n \ge 2$) is a bounded, convex $C^2$-domain and $T>0$. The inhomogeneity $f=(f^1,\dots,f^N)$ belongs to $L^{n+2+\sigma}(\Omega_T,\R^N)$ for some $\sigma>0$. The coefficients $a\colon \R_{>0} \to \R_{>0}$ are of Uhlenbeck-type and satisfy a nonstandard $(p,q)$-growth condition with
\[ 
2 \leq p \leq q < p + \frac{4}{n+2}. 
\] 
Our main result establishes a local Lipschitz estimate up to the lateral boundary for any local weak solution that vanishes on the lateral boundary of the cylinder.
\end{abstract}

%%%%%%%%%%%%%%%%%%%%%%%%%%

\maketitle
\vspace{-0.5cm}
\tableofcontents

%%%%%%%%%%%%%%%%%%%%%%%%%%

\section{Introduction and statement of the main result} \label{sec:introduction}

In this manuscript, we study the boundary regularity of local weak solutions to parabolic systems of the type
\begin{equation} \label{pde}
   \partial_t u^i - \divv \big(a(|Du|) Du^i \big)= f^i, \qquad i=1,\dots,N,
\end{equation}
in a space-time cylinder $\Omega_T \coloneqq \Omega \times (0,T) \subset \R^{n+1}$, 
where $\Omega \subset \R^n$ ($n \ge 2$) is a bounded, open, convex $C^2$-set and $T>0$. 
The coefficients $a\colon \R_{>0} \to \R_{>0}$ are of class $C^1$ and satisfy nonstandard $(p,q)$-growth conditions with
\[
2 \le p \le q < p + \frac{4}{n+2}.
\]
A precise description of the assumptions on the coefficients $a$ is deferred to Section~\ref{subsec:structurecond}. 
For the right-hand side $f=(f^1,\dots,f^N)\colon \Omega_T \to \R^N$, we only assume
\[
f \in L^{n+2+\sigma}(\Omega_T,\R^N)
\]
for some $\sigma>0$, so that no weak differentiability of $f$ is required. As the main result of this article (Theorem~\ref{hauptresultat}), we establish a local~$L^\infty$-gradient estimate for local weak solutions $u$ that holds up to the lateral boundary, provided that $u$ vanishes on the respective part of the boundary in the sense of traces. More precisely, we show that
\[
Du \in L^\infty(\Omega_T \cap Q_\rho(z_0), \R^{Nn}),
\]
for any cylinder 
\[
Q_\rho(z_0) \coloneqq B_\rho(x_0) \times \Lambda_\rho(t_0) 
\coloneqq B_\rho(x_0) \times (t_0-\rho^2, t_0],
\] 
with $u \equiv 0$ on $(\partial \Omega)_T \cap Q_{2\rho}(z_0)$, assuming $\Lambda_{2\rho}(t_0) \subset (0,T)$ and $x_0 \in \partial \Omega$ is a lateral boundary point. In addition, we provide a precise local quantitative $L^\infty$-gradient estimate, stated in~\eqref{est:hauptresultat} of Theorem~\ref{hauptresultat}. This estimate is consistent with previous boundary results in the elliptic $(p,q)$-setting~\cite[Theorem~1.1]{boundaryellipticpq} and the parabolic $(p,p)$-case~\cite[Theorem~1.2]{boundaryparabolic}. Moreover, Theorem~\ref{hauptresultat} extends local interior results for nonstandard $(p,q)$-growth equations, such as~\cite[Theorem~5.1]{bogelein2013parabolic} and~\cite[Theorem~1.3]{singer2015parabolic} in the homogeneous case $f\equiv 0$, to the boundary. 

%%%%%%%%%%%%%%%%%%%%%%%%%%

\subsection{Literature overview} \label{subsec:literature}
The first interior $C^{1,\alpha}$-regularity result for weak solutions to elliptic equations 
(i.e. in the stationary case $N=1$) was obtained by Ural'ceva~\cite{ural1968degenerate}, 
who treated $p$-Laplacian type equations in the super-quadratic range $p\ge 2$. 
Uhlenbeck~\cite{uhlenbeck1977regularity} generalized this result to elliptic $p$-Laplacian type 
systems ($N\ge 1$) for $p\ge 2$. Further generalizations were obtained by Giaquinta and Modica~\cite{giaquinta1986partial,giaquinta1986remarks}. 
The singular sub-quadratic case $1<p<2$ was subsequently addressed by Tolksdorf~\cite{tolksdorf1984regularity} 
and by Acerbi and Fusco~\cite{acerbi1989regularity}, thereby completing the local $C^{1,\alpha}$-regularity theory for $p$-Laplacian type systems for the entire range $p>1$. 

For some time it remained unclear whether the same regularity could be expected for the evolutionary counterpart, that is, for parabolic $p$-Laplacian systems. 
A landmark result of DiBenedetto and Friedman~\cite{friedman1984regularity,dibenedetto1985addendum,Friedman1985}, 
see also~\cite{dibenedetto1993degenerate}, established that a similar interior regularity indeed holds, 
in the sense of local Hölder continuity of the spatial gradient. A critical exponent $\critical$ arises for parabolic $p$-Laplacian systems, 
representing a lower threshold for which weak solutions may become unbounded in the subcritical range $1<p\le \critical$. 
In fact, weak solutions remain locally bounded in this regime provided they satisfy a suitable higher integrability condition~\cite{dibenedetto1993degenerate}.  Consequently, the local Hölder continuity of the spatial gradient applies only to solutions satisfying this condition.  The proof of DiBenedetto and Friedman relies crucially on the technique of intrinsic scaling, 
which captures the natural geometry of the system.

The situation is different for systems with nonstandard $(p,q)$-growth, where $p\le q$. 
In the elliptic setting, the local Lipschitz regularity of weak solutions has been studied extensively, cf.~\cite{boundaryellipticpq,carozza2011higher,carozza2013regularity,colombo2015bounded,
eleuteri2016lipschitz,eleuteri2020regularity,marcellinieins,marcellinizwei,marcellinidrei,
marcellinivier,marcellini2006nonlinear,schmidt2008regularity}. 
By contrast, in the parabolic case, only a few results are known~\cite{bogelein2013parabolic,jiangsheng1998regularity,singer2015parabolic}. 
In~\cite{bogelein2013parabolic,singer2015parabolic}, the authors established quantitative local $L^\infty$-gradient estimates, 
i.e., local Lipschitz regularity, for scalar equations ($N=1$) under slightly different $(p,q)$-growth conditions, 
allowing also for an $x$-dependent vector field. 
As discussed in Section~\ref{subsec:structurecond}, the parameter ranges considered in~\cite[Theorem~1.6]{bogelein2013parabolic} 
and~\cite[Theorem~4.1]{singer2015parabolic} coincide with our range~\eqref{parameter}. 
Finally, in~\cite{jiangsheng1998regularity}, the local $C^{1,\alpha}$-regularity of parabolic systems 
with more general nonstandard growth conditions was obtained. 

Concerning global regularity, i.e., regularity up to the boundary of the domain, the picture is less complete. 
In the elliptic setting, Hamburger~\cite{hamburger1992regularity} established $C^{1,\alpha}$-regularity 
of weak solutions to $p$-Laplacian type systems up to the boundary $\partial\Omega\subset\R^n$, 
provided the solution satisfies either homogeneous Dirichlet or Neumann conditions and the boundary 
is of class $C^{1,1}$. Whether global $C^{1,\alpha}$-regularity persists for general Dirichlet or Neumann data remains an open problem. Foss~\cite{foss2008global} proved that minimizers of  integral functionals with asymptotic $p$-growth 
are Lipschitz continuous up to the boundary if both $\partial\Omega$ and the boundary datum $g$ are of class $C^{1,\alpha}$ for some $\alpha\in(0,1]$. 
Under homogeneous Dirichlet or Neumann conditions, Cianchi and Maz'ya~\cite{cianchi2010global,cianchi2014global} 
provided global Lipschitz estimates for solutions to scalar $p$-Laplacian equations ($N=1$) 
that depend on the regularity of the right-hand side $f$ and hold for a general class of domains, 
including bounded convex sets. Similarly, Lieberman~\cite{lieberman1988boundary} established global 
$C^{1,\alpha}$-regularity for $p$-Laplacian type equations under Dirichlet and conormal Neumann boundary conditions, 
assuming that both the boundary and the boundary data are of class $C^{1,\alpha}$; see also~\cite{antonini2026local} for a recent extension to Orlicz-type equations. In contrast, Banerjee and Lewis~\cite{banerjee2014gradient} proved a result of purely local character. 
Assuming homogeneous Neumann data only on a local part of the boundary, they established a local 
$L^\infty$-gradient estimate in terms of the $L^p$-norm of the gradient in a small interior neighborhood 
of the boundary point. Their analysis concerns the elliptic $p$-Laplace system in bounded convex domains. More recently, Bögelein, Duzaar, Marcellini, and Scheven~\cite{boundaryellipticpq} considered elliptic systems 
with nonstandard $(p,q)$-growth. Their result is local in nature and provides a local $L^\infty$-gradient estimate up to the boundary under homogeneous Dirichlet conditions in bounded and convex sets. As in~\cite{banerjee2014gradient}, 
the estimate depends on the solution in a small neighborhood of the boundary point and on the right-hand side $f$, 
assumed to belong to $L^{n+\sigma}(\Omega,\R^N)$ for some $\sigma>0$. In terms of Lebesgue spaces, this condition 
cannot generally be weakened to $f\in L^n(\Omega,\R^N)$. The assumption $f\in L^{n+\sigma}(\Omega,\R^N)$, 
however, is not sharp; the optimal regularity is $f\in L(n,1)(\Omega,\R^N)$, cf.~\cite{cianchi2010global,cianchi2014global,duzaar2010local}. 
Indeed, a comparable local boundary gradient estimate under homogeneous Dirichlet conditions and 
$f \in L(n,1)(\Omega,\R^N)$ holds for non-uniformly elliptic $(p,q)$-systems as shown by De Filippis and Piccinini~\cite{cristiana}. 

Regarding parabolic $p$-Laplacian type systems, i.e., the case $p=q$ in our structural assumptions~\eqref{parameter}, fewer results are available compared to the elliptic regime. 
Chen and DiBenedetto~\cite{dibenedetto1989boundary} considered parabolic $p$-Laplacian systems with Uhlenbeck-type structure in $C^{1,\alpha}$-domains for $\alpha\in(0,1)$, assuming sufficiently smooth Dirichlet data on the lateral boundary $\partial\Omega\times(0,T)$. 
They first established almost Lipschitz regularity of weak solutions, i.e., $u\in C^{0,\beta}$ for any $\beta\in(0,1)$, and, under homogeneous lateral boundary conditions, obtained global Hölder continuity of the spatial gradient. In~\cite{dibenedetto1992note}, DiBenedetto, Manfredi, and Vespri proved Lipschitz regularity for parabolic $p$-Laplacian equations up to the lateral boundary, under the assumption that the boundary datum is $C^2$ in space and $C^1$ in time, and that the domain is smooth. 
Lieberman~\cite{lieberman1990boundary} further established Hölder continuity of the spatial gradient up to the lateral boundary for weak solutions in $C^{1,\beta}$-domains with $\beta\in(0,1]$, assuming conormal boundary values. A novel contribution in the parabolic context was made by Bögelein~\cite{bogelein2015global}, who proved Lipschitz regularity up to the boundary for a general class of asymptotically parabolic $p$-Laplacian systems. 
Her estimates are local and depend only on the $L^p$-norm of the gradient of the solution and on the boundary data. Specifically, she assumed $\partial\Omega\in C^{1,\alpha}$ and $Dg\in C^{0,\alpha}$ for some $\alpha\in(0,1)$, where $g$ denotes the boundary datum, which is further assumed to possess a distributional derivative in a suitable Morrey space. Finally, we mention the result by Bögelein, Duzaar, Liao, and Scheven~\cite{boundaryparabolic}, who treated parabolic $p$-Laplacian systems with Uhlenbeck-type structure in bounded convex domains. 
Their result can be seen as a parabolic counterpart to the elliptic boundary result in~\cite{boundaryellipticpq} for standard $p$-growth. 
They proved a local $L^\infty$-gradient bound on a neighborhood of the lateral boundary $\partial\Omega\times(0,T)$, assuming that the solution takes homogeneous Dirichlet values there. 
This local boundary gradient estimate provides motivation for the present work and serves as a starting point for our investigation of parabolic systems with nonstandard $(p,q)$-growth. 

%%%%%%%%%%%%%%%%%%%%%%%%%% 

\subsection{Structure conditions} \label{subsec:structurecond}

We impose the following structure conditions on the coefficients~$a\colon\R_{> 0} \to \R_{> 0}$. Firstly,~$a$ is assumed to be of class~$C^1(\R_{>0},\R_{> 0})$ and there holds the vanishing condition
\begin{equation} \label{A-Voraussetzung1}
    \lim\limits_{s\downarrow 0} a(s)s = 0. 
\end{equation} 
Moreover, the following nonstandard~$(p,q)$-growth conditions are imposed
\begin{equation} \label{A-Voraussetzung2}
    C_1 (\mu^2+s^2)^{\frac{p-2}{2}} \leq a(s) + a'(s)s \leq C_2  (\mu^2+s^2)^{\frac{q-2}{2}} \qquad \mbox{for any~$s>0$}
\end{equation}
for a parameter~$\mu\in(0,1]$ and some positive constants~$0<C_1\leq C_2$. Here, the parameters~$p,q$ are subject to the relation
\begin{equation} \label{parameter}
    2 \leq p \leq q < p + \frac{4}{n+2}. 
\end{equation}
This relation arises in view of the elliptic counterpart treated in~\cite{boundaryellipticpq}, where the respective assumed relation of parameters is given by~$1<p\leq q< p\big(1+\frac{2}{n}\big)$. To illustrate this matter of fact, we rewrite
$$ p+\frac{4}{n+2} = p+\frac{2p}{n+2} \cdot \frac{2}{p}, $$ 
where the space-dimension~$n$ has to be replaced by~$n+2$ in the parabolic setting due to the geometry of the considered parabolic time intervals, and the additional factor~$\frac{2}{p}$ denotes the usual parabolic deficit arising from the inhomogeneous behavior of the system~\eqref{pde}.
Moreover, this range of parameters also coincides with the one given in~\cite{bogelein2013parabolic,singer2015parabolic}. We note that the assumption~$\mu\in (0,1]$ implies that only non-degenerate systems are considered in the course of this article. 

%%%%%%%%%%%%%%%%%%%%%%%%%% 

\begin{remark} \upshape 
    The assumption~\eqref{A-Voraussetzung2} has to be understood in the sense that the structure parameters~$\mu,C_1,C_2$ are chosen appropriately in such a way that~\eqref{A-Voraussetzung2} is satisfied and the relation
    $$ C_1 \mu^{p-q} \leq C_2 $$
    holds true, that is a necessary condition which becomes evident when~$s\downarrow 0$ in~\eqref{A-Voraussetzung2}. Indeed, this additional assumption on the constants~$C_1,C_2$ is negligible by replacing the non-degeneracy quantity~$\mu^2$ with~$1$ in~\eqref{A-Voraussetzung2}. We further note that the missing degenerate case where~$\mu=0$ in structure condition~\eqref{A-Voraussetzung2} can only be treated if the coefficients~$a$ satisfy standard~$p$-growth, which prevails only if~$p=q$ holds true. In general however, only the nondegenerate case~$\mu\in(0,1]$ is considered.  
\end{remark} 

%%%%%%%%%%%%%%%%%%%%%%%%%%

For the inhomogeneity~$f=(f^1,\ldots,f^N)$ of the system~\eqref{pde} we assume that
\begin{equation} \label{datumregularität}
    f\in L^{n+2+\sigma}(\Omega_T,\R^N)
\end{equation}
for some parameter~$\sigma>0$.

%%%%%%%%%%%%%%%%%%%%%%%%%%

\begin{remark} \upshape
Based on the assumption~\eqref{parameter}, our analysis is focused on the super-quadratic case where~$p \geq 2$. Nonetheless, a similar approach could be employed to address the sub-quadratic super-critical regime, where~$\critical < p < 2$. For this, the nonstandard~$(p,q)$-growth conditions specified in~\eqref{A-Voraussetzung2} would need to be modified appropriately.
\end{remark}

%%%%%%%%%%%%%%%%%%%%%%%%%%

\subsection{Main result} \label{subsec:mainresult}

We hereby state our main regularity result, which establishes a local~$L^\infty$-gradient estimate for any local weak solution to~\eqref{pde}, which holds true up to the lateral boundary of the domain, provided the solution admits local homogeneous boundary values. 

\begin{mytheorem} \label{hauptresultat}
    Let~$\Omega\subset\R^n$ denote a bounded and convex~$C^2$-set and let
    $$u\in C^0\big(\Lambda_{2\rho}(t_0);L^2(\Omega\cap B_{2\rho}(x_0),\R^N)\big) \cap L^q\big(\Lambda_{2\rho}(t_0);W^{1,q}(\Omega\cap B_{2\rho}(x_0),\R^N) \big)$$
    be a local weak solution to the parabolic system~\eqref{pde} under the set of assumptions~\eqref{A-Voraussetzung1} -- \eqref{datumregularität} that satisfies the boundary condition
    $$u\equiv 0\mbox{\qquad on~$(\partial\Omega)_T \cap Q_{2\rho}(z_0)$},$$
    where~$\Lambda_{2\rho}(t_0)\subset(0,T)$. Then, for any parabolic cylinder~$Q_{\rho}(z_0)=B_{\rho}(x_0)\times\Lambda_{\rho}(t_0)$ with~$x_0\in \partial\Omega$ there holds
    $$Du\in L^\infty(\Omega_T \cap Q_\rho(z_0),\R^{Nn}). $$
    Moreover, we have the quantitative local~$L^\infty$-gradient estimate 
    \begin{align} \label{est:hauptresultat}
        & \sup\limits_{\Omega_T\cap Q_{\rho}(z_0)}|Du| \\
        &\qquad\,\,\leq C \bigg( \Big( 1 + \|f\|^{\frac{(n+2)(n+2+\sigma)}{\sigma}}_{L^{n+2+\sigma}(\Omega_T\cap Q_{2\rho}(z_0))} \rho^{n+2} \Big)  \fiint_{\Omega_T \cap Q_{2\rho}(z_0)} (1+|Du|^p) \,\dx\dt \bigg) ^{\frac{1}{p} \cdot \frac{2p}{4+(p-q)(n+2)}} \nonumber
    \end{align}
    with~$C=C\big(C_1,C_2,n,p,q,\sigma,\Theta_{\rho}(x_0) \big)$.
\end{mytheorem}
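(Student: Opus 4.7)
The plan is to combine the strategy from the parabolic $(p,p)$-boundary work~\cite{boundaryparabolic} with the interior $(p,q)$-iteration from~\cite{bogelein2013parabolic,singer2015parabolic}. Because of the Lavrentiev phenomenon typical for $(p,q)$-growth problems, I would first set up an approximation scheme: replace $a$ by a family $a_\varepsilon\in C^1$ satisfying a standard $p$-growth condition (for instance, by truncating the $q$-growth at a suitable level) and solve the corresponding Cauchy--Dirichlet problems on $\Omega_T\cap Q_{2\rho}(z_0)$ with initial data an approximation of $u(\cdot,t_0-4\rho^2)$ and homogeneous lateral Dirichlet data. The regularized solutions $u_\varepsilon$ possess the second-order Sobolev regularity needed to justify the differentiated equation. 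The goal is then to prove the quantitative estimate~\eqref{est:hauptresultat} for $u_\varepsilon$ with constants independent of $\varepsilon$ and pass to the limit via weak-$\ast$ compactness, using the uniqueness/stability of the limiting weak solution in the considered class.

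The core analytic step is a second-order Caccioppoli inequality up to the lateral boundary, controlling $\sup_t\!\int|Du_\varepsilon|^2\varphi^{2}+\iint(\mu^2+|Du_\varepsilon|^2)^{\frac{p-2}{2}}|D^2u_\varepsilon|^2\varphi^{2}$ by lower-order terms of $(p,q)$-type plus $\|f\|_{L^{n+2+\sigma}}$. Here the convexity of $\Omega$ enters decisively: when differentiating the system in a tangential direction $\tau$ at $\partial\Omega$ and testing with a cut-off of $D_\tau u_\varepsilon$, integration by parts produces a boundary contribution of the form
\[
\int_{(\partial\Omega)_T\cap Q_{2\rho}}a(|Du_\varepsilon|)\,\mathrm{II}(\partial_\nu u_\varepsilon,\partial_\nu u_\varepsilon)\,\varphi^{2}\,\mathrm{d}\mathcal{H}^{n-1}\mathrm{d}t,
\]
where $\mathrm{II}$ denotes the second fundamental form of $\partial\Omega$; by convexity $\mathrm{II}\ge 0$, so this term has the correct sign and can be discarded. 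The geometric quantity $\Theta_\rho(x_0)$ in the estimate encodes the local $C^{2}$-modulus of $\partial\Omega$ which enters through the flattening diffeomorphism used to pull $D_\tau u_\varepsilon$ back to a true tangential field.

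With the Caccioppoli inequality in hand, the next step is to run a Moser-type iteration on intrinsic parabolic cylinders whose scaling is dictated by the solution's own gradient, exactly as in~\cite{bogelein2013parabolic,singer2015parabolic}. The parabolic Sobolev embedding $L^\infty_t L^2_x\cap L^2_t H^1_x\hookrightarrow L^{2(n+2)/n}$ applied to $(\mu^2+|Du_\varepsilon|^2)^{p/4}$ yields, at each iteration step, an exponent gain which in the key inequality matches $q$ against $p+\tfrac{4}{n+2}$. The assumption~\eqref{parameter} is precisely what makes the resulting self-improving scheme converge with a finite limit, and the exponent $\tfrac{(n+2)(n+2+\sigma)}{\sigma}$ in~\eqref{est:hauptresultat} arises from balancing the $f$-contribution (which enters the Caccioppoli inequality scaled through $|f||Du_\varepsilon|$ type terms) against the iterative scheme in the usual Stampacchia/De Giorgi fashion. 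At the boundary points $x_0\in\partial\Omega$, the iteration is performed on half-ball intersections $\Omega\cap B_{\rho_k}(x_0)$, which are admissible thanks to the homogeneous Dirichlet condition providing zero-extension and therefore a valid Sobolev--Poincaré inequality.

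The main obstacle I expect is the \emph{simultaneous} treatment of the $(p,q)$-gap and the boundary geometry: the tangential differentiation that is needed to preserve the Dirichlet condition only controls $n-1$ components of $D^2u_\varepsilon$, and the missing normal-normal component must be recovered via the PDE itself. Under $(p,q)$-growth this recovery is delicate because the algebraic inversion of the principal part loses regularity controlled by $q$ rather than by $p$; one must therefore couple the reconstruction with the higher integrability arising from the iteration so that the resulting $q$-term can be reabsorbed. Handling this coupling uniformly in $\varepsilon$ and locally up to a general convex $C^{2}$-boundary is the technical heart of the argument, after which the convergence $u_\varepsilon\to u$ and the final form~\eqref{est:hauptresultat} follow in a standard manner.
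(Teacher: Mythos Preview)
Your high-level architecture (regularize, prove an \textit{a priori} bound for smooth solutions, pass to the limit) matches the paper, but several of the concrete choices you make diverge from it and in one place leave a genuine gap.

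\medskip

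\textbf{Approximation.} You propose to truncate the upper growth so that the approximating coefficients have standard $p$-growth. The paper does the opposite: it mollifies $a$ to $a_\varepsilon$ and then \emph{adds} a non-degenerate $q$-Laplacian term, setting $b_\varepsilon(s)=\varepsilon(\mu^2+s^2)^{(q-2)/2}+a_\varepsilon(s)$. This makes $b_\varepsilon$ satisfy standard $q$-growth (with $\varepsilon$-dependent constants), so the Cauchy--Dirichlet problem with boundary/initial datum $u\in L^q(W^{1,q})$ is well posed by classical theory, and the $C^3$-regularity up to the boundary is imported from~\cite[Appendix~B]{boundaryparabolic}. Crucially, $b_\varepsilon$ \emph{also} satisfies $(p,q)$-growth with constants independent of $\varepsilon$, so the \textit{a priori} estimate applies uniformly.

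\medskip

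\textbf{Boundary term and $\Theta_\rho(x_0)$.} Your proposal to differentiate tangentially, flatten the boundary, and recover the missing normal--normal Hessian component from the PDE is not what the paper does. The paper differentiates the system in \emph{all} directions simultaneously and tests against $\Phi(|Du|)D_\alpha u$; the resulting boundary integral is handled by Grisvard's differential-geometric identity~\cite[(3,1,1,8)]{grisvard2011elliptic}, which under convexity of $\Omega$ gives it a sign so that it can be discarded outright (cf.~\cite[Lemma~2.1]{boundaryparabolic}). No flattening is performed, and the ``main obstacle'' you single out simply does not arise. Consequently, the constant $\Theta_\rho(x_0)$ is \emph{not} a $C^2$-modulus of $\partial\Omega$; it is the measure-density ratio $\rho^n/|\Omega\cap B_\rho(x_0)|$, which enters only through the Sobolev inequality on convex sets.

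\medskip

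\textbf{Iteration.} No intrinsic scaling is used. The paper runs a plain Moser iteration on nested standard cylinders $Q_{\rho_k}(z_0)$; the gain per step is governed by the parabolic Sobolev exponent $\tfrac{n+2}{n}$, and the sequence $\alpha_k$ of powers is designed so that the $(p,q)$-gap is absorbed (this is where $q<p+\tfrac{4}{n}$ first enters). The stronger relation $q<p+\tfrac{4}{n+2}$ is only needed afterwards, in an interpolation step that replaces the $L^q$-energy on the right by the $L^p$-energy.

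\medskip

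\textbf{Convergence.} Your plan to pass to the limit ``via weak-$*$ compactness, using the uniqueness/stability of the limiting weak solution'' is where I see a real gap: for nonstandard $(p,q)$-growth systems uniqueness of weak solutions is not available in general, and weak-$*$ limits of $Du_\varepsilon$ need not be identified with $Du$ this way. The paper instead proves a direct comparison estimate (testing the difference of the two weak formulations with $u_\varepsilon-u$, cf.\ Lemma~\ref{lem:energyestimate}) which yields $Du_\varepsilon\to Du$ \emph{strongly} in $L^p$; pointwise a.e.\ convergence along a subsequence then suffices to pass to the limit in the $L^\infty$-bound.
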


%%%%%%%%%%%%%%%%%%%%%%%%%%

  The boundary values~$u\equiv 0$ on~$(\partial\Omega)_T \cap Q_{2\rho}(z_0)$ are to be understood in the sense of traces. The geometric quantity~$\Theta_\rho(x_0)$ depends on the bounded and convex $C^2$-set~$\Omega$ and is explained in more detail in Section~\ref{subsec:convex}. 

%%%%%%%%%%%%%%%%%%%%%%%%%%

\begin{remark} \label{remarknachhauptresultatzwei} \upshape 
The factor
$$ \frac{2p}{4+(p-q)(n+2)} $$ 
in the exponent present on the right-hand side of~\eqref{est:hauptresultat} denotes the scaling deficit of the parabolic system~\eqref{pde} and leads to an inhomogeneous nature of estimate~\eqref{est:hauptresultat}. Firstly, we note that our scaling deficit coincides with the one from~\cite[Theorem~5.1]{bogelein2013parabolic} in the super-quadratic case~$p\geq 2$ and also with~\cite[Theorem~4.1]{singer2015parabolic}. Moreover, in case of standard~$p$-growth, i.e. when~$p=q$, we have
$$ \frac{2p}{4+(p-q)(n+2)} = \frac{2p}{p(n+2)-2n} $$
and thus recover the scaling deficit for parabolic systems with~$p$-growth from~\cite[Theorem~1.2]{boundaryparabolic}. We further remark that the scaling deficit would blow up when~$p$ approaches the critical value~$\critical$, i.e. when~$p\downarrow \critical$, which does not arise in the treatment of our super-quadratic case~$p\geq 2$ throughout this manuscript. Similarly, our scaling deficit blows up when~$q$ approaches the upper bound imposed in~\eqref{parameter}, i.e. when~$q\uparrow (p+\frac{4}{n+2})$. 
\end{remark} 

%%%%%%%%%%%%%%%%%%%%%%%%%% 

\begin{remark} \label{remarknachhauptresultatdrei} \upshape

We note that a weaker estimate in comparison to~\eqref{est:hauptresultat} is obtained in Section~\ref{subsec:moser}, estimate~\eqref{est:iterationfünf}, which involves the stronger~$L^q$-norm of~$Du$ on the right-hand side. Thereby, the appearing scaling deficit is given by
$$ \frac{1}{q} \cdot \frac{2q}{4+n(p-q)}, $$
which again equals the respective scaling deficits in~\cite{boundaryparabolic,bogelein2013parabolic,singer2015parabolic}. Indeed, the weaker assumption
$$ 2\leq p\leq q<p+\frac{4}{n} $$
turns out sufficient at this point. The reason for this is explained pretty easily: in the parabolic setting, Sobolev's embedding improves the integrability from~$p$ to~$p+\frac{4}{n}$. Our approach in order to establish~\eqref{est:iterationfünf} builds upon a Moser iteration procedure, for which Caccioppoli-type estimates and Sobolev's embedding are the major components. Together, they lead to a reverse Hölder-type inequality, only if the prior assumptions made on~$p,q$ pertains. \,\\
For the approximation procedure in Section~\ref{sec:approx} however, it is essential to control the~$L^\infty$-gradient norm by its~$L^p$-norm rather than the~$L^q$-norm. This is achieved by exploiting a weaker estimate involving the~$L^q$-gradient norm on the right-hand side and applying an interpolation argument. Thereby, the assumptions on the parameters~$p,q$ in~\eqref{parameter}, as well as the~\textit{a priori} boundedness of the gradient are crucial. 
\end{remark}

%%%%%%%%%%%%%%%%%%%%%%%%%%

\begin{remark} \label{remarknachhauptresultatvier} \upshape

According to~\eqref{pde}, the coefficients~$a(|Du|)$ are radially symmetric with respect to~$Du\in\R^{Nn}$. Such a structure is usually referred to as an~\textit{Uhlenbeck-type structure}. Due to this specific structure of the parabolic system~\eqref{pde}, one can~\textit{a priori} expect the claimed local gradient bound~\eqref{est:hauptresultat} at least heuristically. However, in the case of more general structures, the best one can generally hope for is partial regularity and weak solutions may exhibit singularities within the domain. For a detailed explanation of this phenomenon, we refer to \cite{giaquinta1986partial, giusti2003direct, mingionepartial}.

\end{remark}

%%%%%%%%%%%%%%%%%%%%%%%%%%

\subsection{Strategy of proof} \label{subsec:strategy}

In the following, we provide a few words on the proof of the main result that is Theorem~\ref{hauptresultat}. In Section~\ref{sec:aprioriest}, we commence by proving a quantitative~\textit{a priori}~$L^\infty$-gradient estimate for smooth classical solutions~$u\in C^3(\overline{\Omega}_T\cap Q_{2\rho}(z_0),\R^N)$ to a more regular version of the parabolic system~\ref{pde}, that is given by
\begin{equation} \label{approxsystem}
    \partial_t u^i - \divv \big(a(|Du|) Du^i \big) = f^i \qquad\mbox{in~$\Omega_T\cap Q_{2\rho}(z_0)$, \quad for~$i=1,\ldots,N$},
\end{equation}
where~$\Lambda_{2\rho}(t_0)\subset(0,T)$ and for the datum~$f$ we assume that~$f\in C^1(\overline{\Omega}_T\cap Q_{2\rho}(z_0),\R^N)$ with~$\supp f \Subset\Omega\times\R$. As in the statement of the main regularity result, Theorem~\ref{hauptresultat}, the solutions~$u\in C^3(\overline{\Omega}_T\cap Q_{2\rho}(z_0),\R^N)$ are prescribed to take homogeneous lateral boundary values~$u\equiv 0$ on~$(\partial\Omega)_T\cap Q_{2\rho}(z_0)$. The reason for~$u$ to belong to~$C^3(\overline{\Omega}_T\cap Q_{2\rho}(z_0),\R^N)$ instead of the assumption~$u\in C^2(\overline{\Omega}_T\cap Q_{2\rho}(z_0),\R^N)$ for classical solutions is due to the fact that we aim to establish a Caccioppoli-type inequality through a differentiation of the classical form of the parabolic system~\eqref{approxsystem}, rather than it's weak formulation, necessitating the existence of third order classical derivatives. Moreover, the~$C^2$-regularity of the bounded and convex set~$\Omega$, together with the assumption of homogeneous lateral boundary values for~$u$, is crucial in order to obtain the Caccioppoli-type estimate~\eqref{est:eesecondorder} in Lemma~\ref{lem:eesecondorder}. These conditions allow the application of a differential geometric identity that is taken from Grisvard~\cite{grisvard2011elliptic}, cf.~\cite[Lemma~2.1]{boundaryparabolic}. Roughly speaking, it allows to discard a boundary integral that exhibits a sign, where we refer the reader to~\cite[Proof of Proposition~3.2]{boundaryellipticpq},~\cite[Section~4.1]{cristiana}, and~\cite[Proof of Proposition~3.2]{boundaryparabolic} for the details. We merely remark that the assumption of homogeneous Dirichlet boundary values for~$u$ imply that its tangential derivative vanishes, which is a crucial ingredient for the application of the differential geometric identity from~\cite{grisvard2011elliptic}. Hereinafter, we exploit the Caccioppoli-type estimate from Lemma~\ref{lem:eesecondorder} and establish a reverse Hölder-type inequality in form of Lemma~\ref{lem:reverseholder}. For this, the assumption~\eqref{parameter} on the growth parameters~$p,q$ could actually be weakened to
$$2\leq p\leq q<p+\frac{4}{n},$$
which is essentially due to the fact that Sobolev's embedding in the parabolic setting improves the integrability from~$p$ to~$p+\frac{4}{n}$. This reverse Hölder-type estimate is the starting point for a Moser iteration procedure. This is performed in Section~\ref{subsec:moser}, which overall results in the \textit{a priori}~$L^\infty$-gradient estimate~\eqref{est:apriorigradientsuper} in Proposition~\ref{prop:apriorigradientsuper}. We further remark that in the $L^\infty$-gradient estimate the $q$-energy arises, an immediate consequence of the growth of the parabolic system in view of assumption \eqref{A-Voraussetzung2}. However, this estimate can be interpolated using a classical iteration Lemma \ref{lem:iteration}, combined with another exploitation of the parameter relation \eqref{parameter}, which allows to control the $L^\infty$-gradient norm of $u$ through its $p$-energy. Moreover, the very scaling deficit of the parabolic system~\eqref{pde} from Remark \ref{remarknachhauptresultatzwei} arises, preventing the estimate from being homogeneous in general. 

Section~\ref{sec:approx} sets up an approximation procedure of the original parabolic system~\ref{pde} in order to exploit the previously derived quantitative~$L^\infty$-gradient estimate for smooth classical solutions~$C^3(\overline{\Omega}_T\cap Q_{2\rho}(z_0),\R^N)$ in the setting from above. We mollify the coefficients~$a\in C^1(\R_{>0},\R_{>0})$ and also the datum~$f\in L^{n+2+\sigma}(\Omega_T,\R^N)$ to obtain smooth coefficients~$a_\epsilon$ and~$f_\epsilon\in C^\infty$ with~$\supp f\Subset \Omega_\epsilon\times\R$. It then follows that~$a_\epsilon$ again satisfies comparable nonstandard~$(p,q)$-growth conditions, as outlined in~\cite[Appendix~A]{boundaryparabolic}, and there holds~$a_\epsilon(s)\to a(s)$ pointwise for any~$s>0$, as stated in~\eqref{smallnessest}. We then define the actual approximation coefficients~$b_\epsilon$ by incorporating a non-degenerate~$q$-Laplacian-like term into~$a_\epsilon$, i.e. we consider
$$ b_\epsilon(s) \coloneqq \epsilon(\mu^2+s^2)^{\frac{q-2}{2}}+a_\epsilon(s) $$
for~$s>0$. Consequently, the coefficients~$b_\epsilon$ satisfy standard~$q$-growth conditions, with structure constants depending on~$\epsilon\in(0,1]$. Next, we consider the unique local weak solution~$u_\epsilon$ to the Cauchy-Dirichlet problem 
\begin{align*} 
    \begin{cases}
        \partial_t u^i_\epsilon - \divv \big( b_{\epsilon}(|Du_\epsilon|) Du^i_\epsilon \big)= f^i_\epsilon  & \mbox{in~$\Omega_T \cap Q_{2\rho}(z_0)$}, \\
       u^i_\epsilon = u^i & \mbox{on~$\partial_p \big(\Omega_T \cap Q_{2\rho}(z_0)\big)$},
    \end{cases}
\end{align*}
for any $i=1,\ldots,N\in\N$ and~$\epsilon\in(0,1]$, where~$x_0\in \partial\Omega$ and~$\Lambda_{2\rho}(t_0)=(t_0-4\rho^2,t_0]\subset(0,T)$. The existence of the solutions~$u_\epsilon$ follows from classical existence theory as a consequence of standard~$q$-growth, cf.~\cite{lions1969quelques}. We then establish a comparison estimate between~$u_\epsilon$ and~$u$, which subsequently implies the strong convergence of the gradients~$Du_\epsilon \to Du$ in~$L^p(\Omega_T\cap Q_{2\rho}(z_0),\R^N)$ as~$\epsilon\downarrow 0$. The remaining task is to verify that the approximating solutions~$u_\epsilon$ are indeed regular, classical solutions. Since the coefficients~$b_\epsilon$ satisfy standard~$q$-growth conditions, we refer to~\cite[Appendix~B]{boundaryparabolic}, where a similar result has already been established. This allows us to conclude that~$u_\epsilon \in C^3(\overline{\Omega}_T\cap Q_{2\rho}(z_0),\R^N)$ for any~$\epsilon\in(0,1]$. Contrary to the settings in~\cite{boundaryparabolic,boundaryellipticpq}, where only bounded and convex sets are considered, our analysis requires the additional assumption that~$\Omega$ is a bounded and convex~$C^2$-set. The necessity for this regularity condition stems from our approximation procedure outlined in Section~\ref{sec:approx}. 

The final Section~\ref{sec:hauptresultatbeweis} is devoted to the proof of Theorem~\ref{hauptresultat}. Thereby, we exploit the quantitative~\textit{a priori}~$L^\infty$-gradient estimate from Proposition~\ref{prop:apriorigradientsuper} and use the comparison estimate from Lemma~\ref{lem:energyestimate}. Moreover, the coefficients~$b_\epsilon$ satisfy similar~$(p,q)$-growth conditions as~$a$, such that Proposition~\ref{prop:apriorigradientsuper} is applicable to the data~$b_\epsilon,f_\epsilon,\Omega$. There, it is important that the constant in the quantitative~\textit{a priori}~$L^\infty$-gradient for~$u_\epsilon$ does not depend on~$\epsilon\in(0,1]$. An application of the comparison estimate from Lemma~\ref{lem:energyestimate} yields that~$Du_\epsilon \to Du$ strongly in~$L^p(\Omega_T\cap Q_{2\rho}(z_0),\R^N)$ as~$\epsilon\downarrow 0$. Passing to a subsequence~$(\epsilon_i)_{i\in\N}$, we further infer that~$Du_{\epsilon_i} \to Du$ pointwise a.e. in~$\Omega_T\cap Q_{2\rho}(z_0)$. Finally, for the right-hand side we have~$f_\epsilon \to f$ in~$L^{n+2+\sigma}(\Omega_T\cap Q_{2\rho}(z_0),\R^N)$ as~$\epsilon\downarrow 0$, such that we may pass to the limit~$\epsilon\downarrow 0$ in the quantitative~$L^\infty$-gradient estimate for~$u_\epsilon$. This results in the main Theorem~\ref{hauptresultat}. 

%%%%%%%%%%%%%%%%%%%%%%%%%%

\subsection{Plan of the paper} \label{subsec:planofpaper}

The paper is organized as follows: in Section~\ref{sec:notationandprelim}, we begin by presenting the notation and framework, including supplementary material required later on. Additionally, we state the notion of a local weak solution to the parabolic system~\eqref{pde}. In Section~\ref{sec:aprioriest}, we give the proof of the quantitative local~\textit{a priori}~$L^\infty$-gradient estimate~\eqref{est:apriorigradientsuper} from Proposition~\ref{prop:apriorigradientsuper}, which holds true for local~$C^3$-solutions with homogeneous Dirichlet boundary values with smooth coefficients and also a smooth right-hand side~$f$. Once this~\textit{a priori} regularity result has been established, we proceed in Section~\ref{sec:approx} by performing an approximation procedure of the original system~\eqref{pde}. Here, we mollify the coefficients~$a$ and also the datum~$f$, such that the~\textit{a priori} gradient estimate from~\ref{prop:apriorigradientsuper} can be applied onto the approximating local solutions~$u_\epsilon$ on the bounded and convex~$C^2$-set~$\Omega$. In Section~\ref{sec:hauptresultatbeweis}, we give the proof of the main regularity theorem that is Theorem~\ref{hauptresultat}. This is achieved by passing to the limit~$\epsilon\downarrow 0$ and employing certain compactness arguments, allowing a transfer of the quantitative~$L^\infty$-gradient estimate from the approximating local solutions~$u_\epsilon$ to the original local weak solution~$u$.  
\,\\

%%%%%%%%%%%%%%%%%%%%%%%%%%%%%%

\subsection*{Acknowledgements} The author wishes to extend many thanks to Verena Bögelein and to Frank Duzaar for their support during the development of this article. \\
This research was funded in whole or in part by the Austrian Science Fund (FWF) [10.55776/P36295]. For open access purposes, the author has applied a CC BY public copyright license to any author accepted manuscript version arising from this submission.

%%%%%%%%%%%%%%%%%%%%%%%%%%%%%%%%%

\section{Notation and preliminaries} \label{sec:notationandprelim}

%%%%%%%%%%%%%%%%%%%%%%%%%%%%%%%%%

\subsection{Notation} \label{subsec:notation}
In the course of this article,~$\Omega_T = \Omega\times(0,T)$ represents a space-time cylinder that is taken over a bounded and open convex set~$\Omega\subset\R^n$, while~$0<T<\infty$ represents a positive and finite time. As usual, the \textit{parabolic boundary} of this space-time cylinder shall be denoted by
$$ \partial_p\Omega_T = \big(\overline{\Omega}\times\{0\}\big) \cup \big(\partial\Omega\times(0,T) \big). $$
The expressions \textit{gradient}, \textit{spatial gradient}, and \textit{spatial derivative} will be used interchangeably to refer to the spatial derivative~$Du$ for any differentiable function~$u$. Accordingly, the \textit{partial derivative} of any differentiable function~$u\colon\Omega_T\to\R^k$, for~$k\in\N$, with respect to~$i\in\{1,...,n\}$ will be denoted either by~$D_i u$ or by~$\partial_{x_i}u$. To distinguish between \textit{spatial} and \textit{time derivatives}, the latter will be denoted as~$\partial_t u$, given that~$u\colon\Omega_T\to\R^k$ admits a derivative with respect to the time variable~$t$. It is worth noting that no distinction between classical and weak derivatives will be made. The positive part of any real-valued quantity~$a\in\R$ is denoted by~$a_+ = \max\{a,0\}$, whereas the negative part is denoted as~$a_- = \max\{-a,0\}$. Constants are consistently represented as~$C$ or~$C(\cdot)$ and their dependence is described solely on their variables, not their specific values. However, it is possible for constants to vary from one line to another without further clarification.
\,\\

We denote the open ball in~$\mathbb{R}^n$ with radius~$\rho>0$ and center~$x_0\in\mathbb{R}^n$ as 
$$B_\rho(x_0) \coloneqq\{x\in\mathbb{R}^n\colon~|x-x_0|<\rho\}.$$
By a point~$z_0\in\R^{n+1}$, we always refer to the vector~$z_0 = (x_0,t_0)$ where~$x_0\in\R^n$ and~$t_0\in\R$. In the case where~$x_0=0$, it will sometimes be convenient to simplify notation by writing~$B_\rho$ instead of~$B_\rho(x_0)$. The standard (backward) parabolic cylinder is given by
$$Q_\rho(z_0)\coloneqq B_\rho(x_0)\times \Lambda_{\rho}(t_0) \coloneqq B_\rho(x_0)\times (t_0-\rho^2,t_0].$$
Let~$A\subset\Omega$ denote an arbitrary set with the property that~$\mathcal{L}^n(A)>0$. Then, we define the \textit{slice-wise mean}~$(f)_A\colon (0,T)\to\R^n$ of~$f$ over~$A$ at time~$t\in(0,T)$ as
\begin{align*} (f)_A(t) \coloneqq \displaystyle\fint_A f(x,t)\,\dx,\quad\mbox{for a.e.~$t\in(0,T)$}.
\end{align*}
The slice-wise mean value is indeed well-defined for all~$t\in(0,T)$ for any function that is of class~$f\in C(0,T;L^2(\Omega))$. Similarly, we denote the \textit{mean value} of~$f$ on some measurable set~$E\subset \Omega_T$ with~$\mathcal{L}^{n+1}(E)>0$ by
\begin{align*}
     (f)_E \coloneqq \displaystyle\fiint_E f(x,t)\,\dx\dt.
\end{align*}

%%%%%%%%%%%%%%%%%%%%%%%%%%%%%%%%%

\subsection{Definition of weak solution} \label{subsec:notion}

We hereby state our notion of a local weak solution to the parabolic system~\eqref{pde}. 

%%%%%%%%%%%%%%%%%%%%%%%

\begin{mydef} \label{defweakform}
   Let~$f$ be given as in~\eqref{datumregularität} and the coefficients~$a$ satisfy assumptions~\eqref{A-Voraussetzung1} and~\eqref{A-Voraussetzung2}. A measurable function
   $$ u\in C^0\big(\Lambda_{2\rho}(t_0);L^2(\Omega\cap B_{2\rho}(x_0),\R^N) \big) \cap L^q \big(\Lambda_{2\rho}(t_0);W^{1,q}(\Omega\cap B_{2\rho}(x_0),\R^N) \big) $$
   is a local weak solution to the parabolic system~\eqref{pde} in~$\Omega_T \cap Q_{2\rho}(z_0)$ with~$\Lambda_{2\rho}(t_0)\subset(0,T)$, if the integral identity
\begin{equation} \label{weakform}
    \iint_{\Omega_T \cap Q_{2\rho}(z_0)}  \big( - u \cdot \partial_t \phi + a(|Du|)Du\cdot D\phi \big)  \,\dx\dt =  \iint_{\Omega_T \cap Q_{2\rho}(z_0)} f\cdot \phi\,\dx\dt
\end{equation}
holds true for any test function~$\phi\in C^{\infty}_0(\Omega_T \cap Q_{2\rho}(z_0),\R^N)$. 
\end{mydef} 

%%%%%%%%%%%%%%%%%%%%%%% 

\begin{remark} \label{remarknachdefinition} \upshape 
    According to the reasoning presented in Section~\ref{subsec:structurecond}, the coefficients~$a\in C^1(\R_{>0},\R_{>0})$ exhibit a nonstandard~$(p,q)$-growth condition, where the relation between the parameters~$p,q$ is given as in~\eqref{parameter}. Therefore, the existence of a local weak solution to~\eqref{pde} is~\textit{a priori} not entirely obvious. Existence results for equations with~$(p,q)$-growth have been addressed in~\cite{bogelein2013parabolic,singer2015parabolic}, where the Cauchy-Dirichlet problem was studied. This approach can be used in a similar manner in case of  systems with Uhlenbeck-type~\eqref{pde} and with an inhomogeneity~$f$, where only minor adaptions are required in the proof. In particular, a perturbative approach together with a localized Minty-type argument, cf.~\cite{bogelein2011degenerate}, and the established~\textit{a priori} $L^\infty$-gradient estimate from Proposition~\ref{prop:apriorigradientsuper} can be used to prove the existence of a weak solution to~\eqref{pde} satisfying the regularity requirement in Theorem~\ref{hauptresultat} along the lateral boundary. Therefore, the appropriate function space for local weak solutions is the one given in Definition~\ref{defweakform}, i.e.
    $$ C^0\big(\Lambda_{2\rho}(t_0);L^2(\Omega\cap B_{2\rho}(x_0),\R^N) \big) \cap L^q \big(\Lambda_{2\rho}(t_0);W^{1,q}(\Omega\cap B_{2\rho}(x_0),\R^N) \big). $$ 
\end{remark} 

%%%%%%%%%%%%%%%%%%%%%%%%%%%%%%%%% 

\subsection{On convex domains} \label{subsec:convex}
The quantity
\begin{equation} \label{est:convex}
\Theta_\rho(x_0) = \frac{\rho^n}{|\Omega\cap B_\rho(x_0)|}, 
\end{equation}
which appears in the statement of the main result, that is Theorem~\ref{hauptresultat}, deserves some remark. As already explained in~\cite[Section~2.1]{boundaryparabolic} and also~\cite[Section~2.1]{boundaryellipticpq}, any convex set~$\Omega\subset\R^n$ satisfies a~\textit{uniform cone condition}. As a consequence,~$\Theta_\rho(x_0)$ may be bounded from above by a constant that only depends on the domain~$\Omega$ and that is independent of the considered boundary point~$x_0\in\partial\Omega$ as well as the radius~$\rho>0$. 

%%%%%%%%%%%%%%%%%%%%%%%%%%%%%%%%%

\subsection{Auxiliary material} \label{subsec:auxiliary}

%%%%%%%%%%%%%%%%%%%%%%%%

\subsubsection{Structure estimates} \label{subsubec:structure}

%%%%%%%%%%%%%%%%%%%%%%%%

The growth and monotonicity assumption~\eqref{A-Voraussetzung2} readily yields the following useful lemma.

%%%%%%%%%%%%%%%%%%%%%%%%

\begin{mylem} \label{lem:growthnurfürA}
 Let the coefficients~$a$ satisfy assumptions~\eqref{A-Voraussetzung2} with parameter~$\mu\in(0,1]$ and let~$p,q$ satisfy the relation~\eqref{parameter}. Then, there exist positive constants~$c=c(p,C_1), C=C(q,C_2)$, such that
    \begin{equation} \label{est:growthnufürA}
        c (\mu^2+s^2)^{\frac{p-2}{2}} \leq a(s) \leq C (\mu^2+s^2)^{\frac{q-2}{2}}
    \end{equation}
    holds true for any~$s\geq 0$.
\end{mylem}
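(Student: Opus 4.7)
The central observation is that the quantity appearing in the hypothesis~\eqref{A-Voraussetzung2} is precisely a derivative:
$$ a(s) + a'(s)\,s = \frac{d}{ds}\bigl( a(s)\,s \bigr). $$
Combined with the vanishing condition~\eqref{A-Voraussetzung1}, this lets us recover $a(s)\,s$ itself by integration, and hence $a(s)$ after dividing by $s>0$.

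More precisely, the plan is to set $g(s) := a(s)\,s$ for $s>0$ and to extend it by $g(0)=0$, which is legitimate thanks to~\eqref{A-Voraussetzung1}. Since $a \in C^1(\R_{>0},\R_{>0})$, the function $g$ is $C^1$ on $\R_{>0}$ and continuous at $0$, with $g'(s) = a(s) + a'(s)\,s$. The fundamental theorem of calculus then yields
$$ a(s)\,s \;=\; g(s) - g(0) \;=\; \int_0^s \bigl( a(t) + a'(t)\,t \bigr) \,\mathrm{d}t, $$
and it remains only to insert the two-sided bound from~\eqref{A-Voraussetzung2} into the integrand and divide by~$s$.

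For the upper bound this is immediate: since $q\geq 2$, the map $t \mapsto (\mu^2+t^2)^{(q-2)/2}$ is non-decreasing, so
$$ a(s)\,s \;\leq\; C_2 \int_0^s (\mu^2+t^2)^{\frac{q-2}{2}} \,\mathrm{d}t \;\leq\; C_2\, s\, (\mu^2+s^2)^{\frac{q-2}{2}}, $$
and dividing by $s$ gives the right inequality in~\eqref{est:growthnufürA} with $C = C_2$. For $s=0$ the inequality is understood in the limit sense, since $a$ is only defined on $\R_{>0}$.

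The lower bound requires a bit more care, since the same monotonicity now goes the wrong way. The main step will be to discard the integration on $[0,s/2]$ and to exploit that on the remaining interval $[s/2,s]$ one has
$$ \mu^2 + t^2 \;\geq\; \mu^2 + \tfrac{s^2}{4} \;\geq\; \tfrac{1}{4}\,(\mu^2+s^2), $$
so that $(\mu^2+t^2)^{(p-2)/2} \geq 4^{-(p-2)/2}(\mu^2+s^2)^{(p-2)/2}$ on $[s/2,s]$ (using $p\geq 2$). Consequently
$$ a(s)\,s \;\geq\; C_1 \int_{s/2}^{s} (\mu^2+t^2)^{\frac{p-2}{2}}\,\mathrm{d}t \;\geq\; \frac{C_1}{2}\, 4^{-\frac{p-2}{2}}\, s\, (\mu^2+s^2)^{\frac{p-2}{2}}, $$
and division by $s>0$ yields the left inequality of~\eqref{est:growthnufürA} with $c = c(p,C_1)$. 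There is no genuine obstacle here; the only subtlety worth pointing out is the standard trick of localizing the integral on $[s/2,s]$ to compensate for the fact that, in the super-quadratic regime $p\geq 2$, the integrand $(\mu^2+t^2)^{(p-2)/2}$ is smallest at $t=0$, so that a crude lower bound by $\mu^{p-2}$ would fail to reproduce the correct dependence on $s$.
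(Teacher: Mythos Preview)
Your proof is correct and rests on the same representation as the paper's: writing $a(s)s=\int_0^s\bigl(a(t)+a'(t)t\bigr)\,\mathrm{d}t$ via~\eqref{A-Voraussetzung1} and then inserting~\eqref{A-Voraussetzung2}. The only difference lies in how the resulting integral is estimated. The paper changes variables to $[0,1]$ and invokes~\cite[Lemma~2.1]{acerbi1989regularity} to obtain $\int_0^1(\mu^2+(s\tau)^2)^{(p-2)/2}\,\mathrm{d}\tau\geq C(p)(\mu^2+s^2)^{(p-2)/2}$, whereas you give a direct, self-contained argument: monotonicity for the upper bound (since $q\geq 2$) and the localization on $[s/2,s]$ for the lower bound. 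Your version is slightly more elementary and avoids the external reference; the paper's version is a touch more streamlined once one is willing to quote the Acerbi--Fusco lemma. Substantively the two arguments are equivalent.
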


\begin{proof}
Since the coefficients~$a$ are differentiable on $(0,\infty)$, we calculate with a change of variables
\begin{align*}
    a(s) = \frac{1}{s} \int_{0}^{s} \mbox{$\frac{\d}{\dr}$} (a(r)r) \,\dr = \int_{0}^{1} (a(s\tau) + s\tau a'(s\tau) ) \,\dtau
\end{align*}
for any~$s>0$ and take assumptions~\eqref{A-Voraussetzung1} -- \eqref{A-Voraussetzung2} into account. By estimating~$a$ from below via~\eqref{A-Voraussetzung2} and by employing the result found in~\cite[Lemma~2.1]{acerbi1989regularity}, we obtain
\begin{align*}
    a(s) &= \int_{0}^{1} (a(s\tau) + s\tau a'(s\tau) ) \,\d\tau\geq C_1 \int_{0}^{1} ( \mu^2+ (s\tau)^2)^{\frac{p-2}{2}} \,\d\tau \geq C(p)C_1 (\mu^2+s^2)^{\frac{p-2}{2}},
\end{align*}
which implies the lower bound of the lemma in the case where~$s>0$. The upper found can be inferred in the very same manner.    
\end{proof}

%%%%%%%%%%%%%%%%%%%%%%%%

A direct calculation verifies that for any~$\xi\in\R^{Nn}\setminus\{0\}$ there holds
\begin{equation} \label{est:ableitungvonA}
    \partial_{\xi^j_{\beta}} \big[a(|\xi|)\xi^{i}_{\alpha})\big] = a(|\xi|) \delta^{ij}_{\alpha\beta} + \frac{a'(|\xi|)}{|\xi|} \xi^\alpha_i \xi^\beta_j
\end{equation}
for all~$\alpha,\beta\in\{1,\ldots,n\}$ and~$i,j \in \{1,\ldots,N\}$. Therefore, the following positive quantities
    \begin{align} \label{lambda:Lambda}
    \begin{cases}
        h(s) \coloneqq \min\{a(s),a(s) + sa'(s) \}, & \\
        H(s) \coloneqq \max\{a(s),a(s) + sa'(s) \}&
        \end{cases}
    \end{align}
 arise, where~$s > 0$. They will be useful later on. In particular, as an immediate consequence of the growth assumption~\eqref{A-Voraussetzung2}, we obtain the following lemma.

%%%%%%%%%%%%%%%%%%%%%%%%

\begin{mylem} \label{lem:lambda:Lambda}
    Let the coefficients~$a$ satisfy assumptions~\eqref{A-Voraussetzung2} with parameter~$\mu\in(0,1]$. For any~$s > 0$ there holds
    \begin{align} \label{est:lambda:Lambda}
    \begin{cases}
         h(s) \geq c(p,C_1) (\mu^2+s^2)^{\frac{p-2}{2}}, & \\
        H(s) \leq C(q,C_2) (\mu^2+s^2)^{\frac{q-2}{2}}.&
    \end{cases}
    \end{align}
\end{mylem}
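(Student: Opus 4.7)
The plan is to derive both bounds as an immediate consequence of combining the hypothesis~\eqref{A-Voraussetzung2}, which already controls $a(s)+sa'(s)$ from above and below, with the pointwise envelope on $a(s)$ alone supplied by Lemma~\ref{lem:growthnurfürA}. Since $h(s)$ and $H(s)$ are by definition the pointwise minimum and maximum of these two quantities, nothing is required beyond comparing the two sets of bounds term-by-term.

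Concretely, I would first record that \eqref{A-Voraussetzung2} yields, for every $s>0$,
$$ C_1 (\mu^2+s^2)^{\frac{p-2}{2}} \le a(s) + s a'(s) \le C_2 (\mu^2+s^2)^{\frac{q-2}{2}}, $$
while Lemma~\ref{lem:growthnurfürA} provides positive constants $c_1=c_1(p,C_1)$ and $c_2=c_2(q,C_2)$ with
$$ c_1 (\mu^2+s^2)^{\frac{p-2}{2}} \le a(s) \le c_2 (\mu^2+s^2)^{\frac{q-2}{2}}. $$
Taking the minimum of the two lower bounds produces
$$ h(s) = \min\{a(s), a(s)+sa'(s)\} \ge \min\{c_1,C_1\} (\mu^2+s^2)^{\frac{p-2}{2}}, $$
and the maximum of the two upper bounds yields
$$ H(s) = \max\{a(s), a(s)+sa'(s)\} \le \max\{c_2,C_2\} (\mu^2+s^2)^{\frac{q-2}{2}}, $$
which is precisely \eqref{est:lambda:Lambda} with $c(p,C_1)=\min\{c_1,C_1\}$ and $C(q,C_2)=\max\{c_2,C_2\}$.

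There is no substantive obstacle: the argument is a direct juxtaposition of the structural assumption on $a(s)+sa'(s)$ with the previously derived envelope on $a(s)$. The only point requiring any care is tracking the explicit dependence of the constants on $p$, $q$, $C_1$, and $C_2$, so that it aligns with the form stated in the lemma.
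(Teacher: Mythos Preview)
Your proposal is correct and matches the paper's treatment: the paper does not give an explicit proof but states the lemma as an immediate consequence of the growth assumption~\eqref{A-Voraussetzung2}, and your argument---combining~\eqref{A-Voraussetzung2} with the envelope on $a(s)$ from Lemma~\ref{lem:growthnurfürA} and taking the pointwise minimum and maximum---is precisely the obvious way to make this immediate consequence explicit.
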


%%%%%%%%%%%%%%%%%%%%%%%%

Lemma~\ref{lem:lambda:Lambda} and~\eqref{est:ableitungvonA} are the starting point for the subsequent lemma, stating a growth and monotonicity property for the vector field in the diffusion term of the parabolic system~\eqref{pde}. It follows similarly to the case~$p=q$, where we refer to~\cite[Section 2.3]{boundaryparabolic}. 

%%%%%%%%%%%%%%%%%%%%%%%%

\begin{mylem} \label{lem:monotonicity}
Let the coefficients~$a$ satisfy assumptions~\eqref{A-Voraussetzung2} with parameter~$\mu\in(0,1]$ and let~$\xi\in\R^{Nn}\setminus\{0\}$. Then, there holds
    \begin{equation} \label{est:monotonicity}
        h(|\xi|)|\eta|^2 \leq \sum\limits_{\alpha,\beta=1}^n \sum\limits_{i,j=1}^N \partial_{\xi^j_{\beta}} \big[a(|\xi|)\xi^{i}_{\alpha})\big] \eta^i_\alpha \eta^j_\beta \leq H(|\xi|)|\eta|^2
    \end{equation}
    for any~$\eta\in\R^{Nn}$.
\end{mylem}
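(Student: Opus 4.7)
The plan is to start from the explicit formula for the Jacobian given in equation~\eqref{est:ableitungvonA}. Contracting it with $\eta^i_\alpha \eta^j_\beta$ and summing reduces the quadratic form in the middle of~\eqref{est:monotonicity} to the compact expression
\[
\mathcal{Q}(\xi,\eta) \coloneqq a(|\xi|)\,|\eta|^2 + \frac{a'(|\xi|)}{|\xi|}\,(\xi\cdot\eta)^2,
\]
where $\xi\cdot\eta \coloneqq \sum_{i,\alpha} \xi^i_\alpha \eta^i_\alpha$. So the whole lemma is simply the assertion that $h(|\xi|)|\eta|^2 \le \mathcal{Q}(\xi,\eta) \le H(|\xi|)|\eta|^2$.

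The key observation is the Cauchy--Schwarz inequality $0 \le (\xi\cdot\eta)^2 \le |\xi|^2 |\eta|^2$, which pins down the range of the ratio $(\xi\cdot\eta)^2/|\xi|^2$ to $[0,|\eta|^2]$. I would then distinguish two cases according to the sign of $a'(|\xi|)$. If $a'(|\xi|) \ge 0$, the coefficient $a'(|\xi|)/|\xi|$ multiplying $(\xi\cdot\eta)^2$ is nonnegative, so $\mathcal{Q}(\xi,\eta)$ is minimized by taking $(\xi\cdot\eta)^2 = 0$ and maximized by $(\xi\cdot\eta)^2 = |\xi|^2|\eta|^2$; the extreme values are $a(|\xi|)|\eta|^2$ and $(a(|\xi|)+|\xi|a'(|\xi|))|\eta|^2$, which coincide with $h(|\xi|)|\eta|^2$ and $H(|\xi|)|\eta|^2$ by the very definition~\eqref{lambda:Lambda}. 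The case $a'(|\xi|) < 0$ is completely symmetric: the roles of the two extremes are reversed, so the minimum becomes $(a(|\xi|)+|\xi|a'(|\xi|))|\eta|^2 = h(|\xi|)|\eta|^2$ and the maximum becomes $a(|\xi|)|\eta|^2 = H(|\xi|)|\eta|^2$.

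There is no real obstacle here: once the identity~\eqref{est:ableitungvonA} is in hand, the lemma is just a linear interpolation between two extremes of the nonnegative quantity $(\xi\cdot\eta)^2/|\xi|^2 \in [0,|\eta|^2]$, combined with the case analysis that makes the definition of $h,H$ in~\eqref{lambda:Lambda} transparent. The only thing one has to be mindful of is that the assumption $\xi \ne 0$ is used to divide by $|\xi|$ (and to invoke $C^1$-regularity of $a$ on $(0,\infty)$), which is exactly the standing hypothesis of the lemma.
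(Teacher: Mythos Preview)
Your proof is correct and follows the standard route: contract the explicit Jacobian formula~\eqref{est:ableitungvonA} with $\eta$ to obtain $a(|\xi|)|\eta|^2 + \frac{a'(|\xi|)}{|\xi|}(\xi\cdot\eta)^2$, then bound the rank-one term via Cauchy--Schwarz and a sign distinction on $a'(|\xi|)$. The paper itself does not spell out a proof but simply refers to the case $p=q$ treated in~\cite[Section~2.3]{boundaryparabolic}, where exactly this argument is carried out; your write-up is thus essentially the same as what the paper invokes.
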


%%%%%%%%%%%%%%%%%%%%%%%%

Combining the results of Lemma~\ref{lem:lambda:Lambda} and~\ref{lem:monotonicity}, we readily obtain the following ellipticity and growth estimate.

%%%%%%%%%%%%%%%%%%%%%%%%

\begin{mylem} \label{lem:ellipticity}
    Let the coefficients~$a$ satisfy assumptions~\eqref{A-Voraussetzung2} with parameter~$\mu\in(0,1]$. Then, there exist positive constants~$\widetilde{C}_1=\widetilde{C}_1(p,C_1), \widetilde{C}_2=\widetilde{C}_2(q,C_2)$, such that for any~$\xi\in\R^{Nn}\setminus\{0\}$ there holds
     \begin{equation} \label{est:ellipticity}
       \widetilde{C}_1(\mu^2+|\xi|^2)^{\frac{p-2}{2}}|\eta|^2 \leq \sum\limits_{\alpha,\beta=1}^n \sum\limits_{i,j=1}^N \partial_{\xi^j_{\beta}} \big[a(|\xi|)\xi^{i}_{\alpha})\big] \eta^i_\alpha \eta^j_\beta \leq \widetilde{C}_2(\mu^2+|\xi|^2)^{\frac{q-2}{2}}|\eta|^2
    \end{equation}
    for any~$\eta\in\R^{Nn}$. 
\end{mylem}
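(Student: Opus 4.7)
The plan is to derive the estimate as an immediate consequence of the two preceding lemmas, which is why the introductory sentence before the statement already describes the result as a direct combination. Since Lemma~\ref{lem:monotonicity} sandwiches the bilinear form associated with $\partial_{\xi}[a(|\xi|)\xi]$ between $h(|\xi|)|\eta|^2$ and $H(|\xi|)|\eta|^2$, and Lemma~\ref{lem:lambda:Lambda} in turn sandwiches the scalar quantities $h(s)$ and $H(s)$ between the non-degenerate $(p-2)$- and $(q-2)$-powers of $(\mu^2+s^2)^{1/2}$, one merely has to chain these inequalities with $s=|\xi|$.

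Concretely, I would begin by fixing $\xi\in\R^{Nn}\setminus\{0\}$ and $\eta\in\R^{Nn}$, and applying Lemma~\ref{lem:monotonicity} with these choices to obtain both the lower and the upper bound
\[
h(|\xi|)|\eta|^2 \le \sum_{\alpha,\beta=1}^n \sum_{i,j=1}^N \partial_{\xi^j_\beta}\!\bigl[a(|\xi|)\xi^i_\alpha\bigr]\,\eta^i_\alpha\,\eta^j_\beta \le H(|\xi|)|\eta|^2.
\]
Then I would invoke Lemma~\ref{lem:lambda:Lambda} at $s=|\xi|>0$, which furnishes the positive constants $c(p,C_1)$ and $C(q,C_2)$ such that $h(|\xi|)\ge c(p,C_1)(\mu^2+|\xi|^2)^{\frac{p-2}{2}}$ and $H(|\xi|)\le C(q,C_2)(\mu^2+|\xi|^2)^{\frac{q-2}{2}}$. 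Multiplying through by $|\eta|^2\ge 0$ preserves the inequalities, and setting $\widetilde{C}_1\coloneqq c(p,C_1)$ and $\widetilde{C}_2\coloneqq C(q,C_2)$ delivers~\eqref{est:ellipticity} in the stated form, with the claimed dependence of the constants.

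There is no real obstacle here: the monotonicity/growth content was already absorbed into Lemmas~\ref{lem:lambda:Lambda} and~\ref{lem:monotonicity}, and the present statement is only a packaging step replacing the auxiliary functions $h,H$ by explicit $\mu$-regularised polynomial weights. The only point worth a brief remark is that the assumption $\mu\in(0,1]$ ensures $(\mu^2+|\xi|^2)^{\frac{p-2}{2}}$ and $(\mu^2+|\xi|^2)^{\frac{q-2}{2}}$ are well defined and finite for all $\xi\in\R^{Nn}\setminus\{0\}$ (indeed, also at $\xi=0$ under the superquadratic range~\eqref{parameter}), so no passage to the limit $\xi\to 0$ is required to make sense of the bound.
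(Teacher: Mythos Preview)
Your proposal is correct and matches the paper's approach exactly: the paper states this lemma without proof, introducing it as an immediate consequence of combining Lemmas~\ref{lem:lambda:Lambda} and~\ref{lem:monotonicity}. Your chaining of the inequality~\eqref{est:monotonicity} with the bounds~\eqref{est:lambda:Lambda} at $s=|\xi|$ is precisely the intended argument.
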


%%%%%%%%%%%%%%%%%%%%%%%%

\subsubsection{Algebraic inequalities} \label{subsubsec:algebraic} 

The next algebraic lemma is taken from~\cite[Lemma~2.3]{boundaryparabolic} and will be used in the~\textit{a priori}~$L^\infty$-gradient estimate in Section~\ref{sec:aprioriest}. Thereby, it is used in the course of the Moser iteration procedure in order to estimate the appearing constants. 

\begin{mylem} \label{lem:algebraic}
    Let~$A>1$,~$\kappa>1$,~$\delta>0$, and~$j\in\N$. Then, there holds
    \begin{align} \label{est:algeins}
        \prod_{i=i}^j A^{\frac{\kappa^{j-i+1}}{\delta(\kappa^j-1)}} = A^{\frac{\kappa}{\delta(\kappa-1)}}
    \end{align}
    and also
    \begin{align} \label{est:algzwei}
        \prod_{i=i}^j A^{\frac{i \kappa^{j-i+1}}{\delta(\kappa^j-1)}} \leq A^{\frac{\kappa^2}{\delta(\kappa-1)^2}}.
    \end{align}
\end{mylem}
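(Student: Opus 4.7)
Both identities are purely about the exponents, so my plan is to convert each product into $A$ raised to a single sum and then evaluate (respectively bound) that sum. Since $A>1$, monotonicity of $t\mapsto A^t$ will let me pass from an inequality between exponents to the stated inequality between powers. (I also read $\prod_{i=i}^{j}$ as the obvious typo for $\prod_{i=1}^{j}$.)

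For (\ref{est:algeins}), the product equals $A^{S_1}$ with
\[
S_1 \;=\; \frac{1}{\delta(\kappa^{j}-1)}\sum_{i=1}^{j}\kappa^{\,j-i+1}.
\]
A reindexing $k=j-i+1$ turns this inner sum into a geometric series $\sum_{k=1}^{j}\kappa^{k}=\kappa(\kappa^{j}-1)/(\kappa-1)$, which cancels the denominator $\kappa^{j}-1$ and yields $S_1=\kappa/[\delta(\kappa-1)]$, giving (\ref{est:algeins}).

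For (\ref{est:algzwei}), the product equals $A^{S_2}$ with
\[
S_2 \;=\; \frac{1}{\delta(\kappa^{j}-1)}\sum_{i=1}^{j} i\,\kappa^{\,j-i+1},
\]
so it suffices to bound the inner sum. My preferred trick is to write $i=\sum_{m=1}^{i}1$ and interchange the order of summation, which gives
\[
\sum_{i=1}^{j} i\,\kappa^{\,j-i+1}
\;=\;\sum_{m=1}^{j}\sum_{i=m}^{j}\kappa^{\,j-i+1}
\;=\;\frac{\kappa}{\kappa-1}\sum_{m=1}^{j}\bigl(\kappa^{\,j-m+1}-1\bigr).
\]
Summing the remaining geometric series (again shifting $m\mapsto j-m+1$) produces
\[
\sum_{i=1}^{j} i\,\kappa^{\,j-i+1}
\;=\;\frac{\kappa^{2}(\kappa^{j}-1)}{(\kappa-1)^{2}}-\frac{j\kappa}{\kappa-1}
\;\le\;\frac{\kappa^{2}(\kappa^{j}-1)}{(\kappa-1)^{2}},
\]
where the negative correction term is discarded using $\kappa>1$ and $j\ge 1$. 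Dividing by $\delta(\kappa^{j}-1)$ yields $S_2\le\kappa^{2}/[\delta(\kappa-1)^{2}]$, and then $A>1$ promotes the exponent bound to (\ref{est:algzwei}).

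There is no real obstacle here: the whole proof is two geometric series. The only point to be slightly careful about is to exploit $\kappa>1$ twice, once to sum the geometric series with ratio $\kappa$ and once to discard the nonpositive term $-j\kappa/(\kappa-1)$ in the bound for $S_2$; the assumption $A>1$ is then exactly what is needed to conclude in the last step.
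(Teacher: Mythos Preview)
Your proof is correct. The paper does not actually give its own proof of this lemma but merely cites \cite[Lemma~2.3]{boundaryparabolic}; your elementary geometric-series computation (reindexing, summing $\sum_{k=1}^{j}\kappa^{k}$, and for \eqref{est:algzwei} interchanging the order of summation and discarding the nonpositive term) is precisely the standard argument one would expect behind that citation and fills in the omitted details completely.
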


%%%%%%%%%%%%%%%%%%%%%%%% 

The subsequent iteration lemma is classical and allows a technique of reabsorbing certain quantities, which stems from~\cite[Lemma~6.1]{giusti2003direct}. 

\begin{mylem} \label{lem:iteration} 
      Let $0< R_0< R_1$, $\phi\colon[R_0,R_1]\to\R$ be a bounded, non-negative function and assume that for $R_0\leq \rho < r \leq R_1$ there holds
      $$\phi(\rho) \leq \eta \phi(r) + \frac{A}{(r-\rho)^{\alpha}} + \frac{B}{(r-\rho)^{\beta}} + C$$
      for some constants $A,B,C,\alpha\geq \beta\geq 0$, and $\eta\in(0,1)$. Then, there exists a constant $\widetilde{C}=\widetilde{C}(\eta,\alpha)$, such that for all $R_0\leq \rho_0<r_0\leq R_1$ there holds
      $$\phi(\rho_0) \leq \widetilde{C}\bigg( \frac{A}{(r_0-\rho_0)^{\alpha}} + \frac{A}{(r_0-\rho_0)^{\alpha}} + C \bigg).$$ 
\end{mylem}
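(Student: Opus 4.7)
The statement is a classical iteration-of-radii lemma (cf.\ Giusti~\cite{giusti2003direct}), and the strategy is to build an increasing sequence of levels $\rho_0<\rho_1<\rho_2<\dots$ accumulating below $r_0$ along which the hypothesis is iterated. The idea is that the factor $\eta\in(0,1)$ can absorb a suitable blow-up of the gap-term $(\rho_{k+1}-\rho_k)^{-\alpha}$, provided the geometric ratio is chosen large enough, so that after $k$ iterations the coefficient $\eta^{k}$ sitting in front of $\phi(\rho_k)$ tends to zero while all remaining terms form convergent geometric series. I note that since $\alpha\ge\beta\ge 0$ and the gaps $(\rho_{k+1}-\rho_k)$ will be comparable to $(r_0-\rho_0)$ up to a bounded factor (smaller than $1$), both $A$- and $B$-type remainders admit a common geometric control, so the singular exponent $\alpha$ governs the smallness condition.

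\textbf{Step 1: construction of the sequence.} Fix some $\tau\in(0,1)$, to be chosen later, and set
\begin{equation*}
   \rho_k \;\coloneqq\; \rho_0 + (1-\tau)(r_0-\rho_0)\sum_{i=0}^{k-1}\tau^{i}, \qquad k\in\N_0,
\end{equation*}
with $\rho_0$ as given. Then $(\rho_k)$ is strictly increasing, $\rho_k\uparrow r_0$ as $k\to\infty$, and the gaps satisfy $\rho_{k+1}-\rho_k=(1-\tau)\tau^{k}(r_0-\rho_0)$, all of which lie in $[R_0,R_1]$.

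\textbf{Step 2: iterated inequality.} Applying the hypothesis with the pair $(\rho_k,\rho_{k+1})$ in place of $(\rho,r)$ gives
\begin{equation*}
   \phi(\rho_k)\;\le\;\eta\,\phi(\rho_{k+1})+\frac{A\,\tau^{-\alpha k}}{(1-\tau)^{\alpha}(r_0-\rho_0)^{\alpha}}+\frac{B\,\tau^{-\beta k}}{(1-\tau)^{\beta}(r_0-\rho_0)^{\beta}}+C.
\end{equation*}
Iterating this $k$ times starting from $\phi(\rho_0)$, I obtain
\begin{equation*}
   \phi(\rho_0)\;\le\;\eta^{k}\,\phi(\rho_k)+\sum_{j=0}^{k-1}\eta^{j}\left[\frac{A\,\tau^{-\alpha j}}{(1-\tau)^{\alpha}(r_0-\rho_0)^{\alpha}}+\frac{B\,\tau^{-\beta j}}{(1-\tau)^{\beta}(r_0-\rho_0)^{\beta}}+C\right].
\end{equation*}

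\textbf{Step 3: choice of $\tau$ and passage to the limit.} I choose $\tau\in(0,1)$ so that $\eta\tau^{-\alpha}<1$, e.g.\ $\tau=(\tfrac{1+\eta^{1/\alpha}}{2})^{}$ or simply any $\tau>\eta^{1/\alpha}$; this simultaneously forces $\eta\tau^{-\beta}<1$ because $\beta\le\alpha$ and $\tau<1$. Both geometric series $\sum\eta^{j}\tau^{-\alpha j}$ and $\sum\eta^{j}\tau^{-\beta j}$ then converge to constants depending only on $\eta$ and $\alpha$ (the $\beta$-series is bounded by the $\alpha$-series up to factors of $(1-\tau)$). Using boundedness of $\phi$ together with $\eta^{k}\to 0$, the leading term $\eta^{k}\phi(\rho_k)$ vanishes as $k\to\infty$, and I arrive at
\begin{equation*}
   \phi(\rho_0)\;\le\;\widetilde{C}(\eta,\alpha)\left[\frac{A}{(r_0-\rho_0)^{\alpha}}+\frac{B}{(r_0-\rho_0)^{\beta}}+C\right],
\end{equation*}
which is the desired estimate (noting the evident misprint in the target display).

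\textbf{Main obstacle.} The only delicate point is the quantitative choice of $\tau$: it must be strictly greater than $\eta^{1/\alpha}$ (the critical ratio making $\eta\tau^{-\alpha}=1$) to guarantee geometric convergence, but not too close to $1$, otherwise the prefactor $(1-\tau)^{-\alpha}$ degenerates. A symmetric choice such as $\tau^{\alpha}=\tfrac{1+\eta}{2}$ balances both constraints and yields an explicit constant $\widetilde{C}$ depending solely on $\eta$ and $\alpha$, as claimed. Boundedness of $\phi$ enters exclusively to justify $\eta^{k}\phi(\rho_k)\to 0$ and is otherwise inessential.
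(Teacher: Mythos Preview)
Your argument is correct and is precisely the classical proof of this iteration lemma. The paper does not supply its own proof but merely cites \cite[Lemma~6.1]{giusti2003direct}, where exactly this geometric-sequence-of-radii argument is carried out; your proposal thus coincides with the cited source.
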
 

We recall the following interpolation inequality, which is a consequence of Hölder's inequality. 

%%%%%%%%%%%%%%%%%%%%%%%% 

\begin{mylem} \label{lem:interpolation} 
    Let~$\Omega\subset\R^n$,~$1\leq p\leq q\leq r$, and~$u\in L^p(\Omega)\cap L^r(\Omega)$. Then, there holds~$u\in L^q(\Omega)$, and for~$\theta\in[0,1]$ with~$\frac{1}{q} = \frac{\theta}{p} + \frac{1-\theta}{r}$, we have the quantitative estimate
    \begin{align*}
        \|u\|_{L^q(\Omega)} \leq \|u\|^{\theta}_{L^p(\Omega)} \|u\|^{1-\theta}_{L^r(\Omega)}.
    \end{align*}
\end{mylem}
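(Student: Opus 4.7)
The plan is to deduce the inequality directly from Hölder's inequality applied to a suitable factorization of $|u|^q$. Since the exponent $\theta\in[0,1]$ satisfies $\frac{1}{q}=\frac{\theta}{p}+\frac{1-\theta}{r}$, it is natural to split the power $q$ as $q=q\theta+q(1-\theta)$ and write
\[
|u(x)|^q = |u(x)|^{q\theta}\cdot |u(x)|^{q(1-\theta)}.
\]
First I would handle the degenerate cases $\theta=0$ (so $q=r$) and $\theta=1$ (so $q=p$), where the claim is trivial, and the case $r=\infty$, where the inequality reduces to the elementary estimate $\|u\|_{L^q}\leq \|u\|_{L^p}^{p/q}\|u\|_{L^\infty}^{1-p/q}$ obtained by pulling out an $L^\infty$-factor. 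In the remaining generic case $0<\theta<1$ with $r<\infty$, the exponents $\alpha\coloneqq \frac{p}{q\theta}$ and $\beta\coloneqq \frac{r}{q(1-\theta)}$ are both in $(1,\infty)$, and the defining relation for $\theta$ gives exactly
\[
\tfrac{1}{\alpha}+\tfrac{1}{\beta} = \tfrac{q\theta}{p}+\tfrac{q(1-\theta)}{r} = q\Big(\tfrac{\theta}{p}+\tfrac{1-\theta}{r}\Big) = 1,
\]
so $\alpha$ and $\beta$ are Hölder conjugate. This verification is the one place where one must be careful, but it is immediate from the hypothesis.

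Next I would apply Hölder's inequality with these exponents to the above factorization, obtaining
\[
\int_\Omega |u|^q \,\dx \leq \Big(\int_\Omega |u|^{q\theta\alpha}\,\dx\Big)^{1/\alpha}\Big(\int_\Omega |u|^{q(1-\theta)\beta}\,\dx\Big)^{1/\beta} = \Big(\int_\Omega |u|^p\,\dx\Big)^{q\theta/p}\Big(\int_\Omega |u|^r\,\dx\Big)^{q(1-\theta)/r}.
\]
In particular this already shows $u\in L^q(\Omega)$ as soon as $u\in L^p(\Omega)\cap L^r(\Omega)$. Taking the $q$-th root then yields
\[
\|u\|_{L^q(\Omega)} \leq \|u\|_{L^p(\Omega)}^{\theta}\,\|u\|_{L^r(\Omega)}^{1-\theta},
\]
which is exactly the claimed estimate.

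There is no genuine obstacle here: the whole content lies in choosing the conjugate pair $(\alpha,\beta)$ so that Hölder matches the interpolation identity, and in checking the boundary cases $\theta\in\{0,1\}$ and $r=\infty$ separately. The argument is a standard consequence of Hölder and requires no structural assumption on $\Omega$ beyond measurability, so it applies in the generality stated in the lemma.
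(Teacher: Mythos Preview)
Your proposal is correct and follows exactly the approach the paper indicates: the paper does not give a detailed proof but simply records that the interpolation inequality ``is a consequence of H\"older's inequality,'' which is precisely the factorization-and-conjugate-exponent argument you carry out.
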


%%%%%%%%%%%%%%%%%%%%%%%%

\subsubsection{Other material} \label{subsubsec:othermaterial}

%%%%%%%%%%%%%%%%%%%%%%%%

The subsequent lemma states a version of Kato's inequality. 

\begin{mylem} \label{lem:kato}
    Let~$k\in\N$ and~$B_R(x_0)\subset\R^n$. For any~$u\in W^{2,1}(B_R(x_0),\R^k)$ there holds
   \begin{equation*}
        |D|Du||\leq |D^2 u|\quad\text{a.e. in $B_R(x_0)$}.
    \end{equation*}
\end{mylem}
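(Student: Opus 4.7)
The plan is to reduce this vector-valued Kato inequality to a chain-rule argument for $v := Du$, which lies in $W^{1,1}(B_R(x_0), \R^{kn})$ thanks to $u \in W^{2,1}$. To avoid the singularity of the Euclidean norm at the origin, I would introduce the smooth approximation $f_\epsilon(y) := \sqrt{|y|^2 + \epsilon^2}$ for $y \in \R^{kn}$ and $\epsilon > 0$. Then $f_\epsilon \in C^\infty(\R^{kn})$ has globally bounded gradient $Df_\epsilon(y) = y/\sqrt{|y|^2 + \epsilon^2}$, so the standard Sobolev chain rule yields $f_\epsilon(v) - \epsilon \in W^{1,1}(B_R(x_0))$ together with the explicit formula
$$D_i\bigl(f_\epsilon(v)\bigr)(x) \;=\; \frac{\sum_{j,\alpha} v^j_\alpha(x)\, D_i v^j_\alpha(x)}{\sqrt{|v(x)|^2 + \epsilon^2}} \qquad \text{for a.e.\ } x \in B_R(x_0).$$

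The key pointwise estimate then comes from Cauchy--Schwarz applied in the numerator: $\bigl|\sum_{j,\alpha} v^j_\alpha D_i v^j_\alpha\bigr| \leq |v|\,|D_i v|$, whence
$$|D_i f_\epsilon(v)| \;\leq\; \frac{|v|}{\sqrt{|v|^2 + \epsilon^2}}\, |D_i v| \;\leq\; |D_i v|.$$
Summing in $i$ gives the uniform-in-$\epsilon$ bound $|Df_\epsilon(v)| \leq |Dv| = |D^2 u|$ almost everywhere.

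The final step is to pass to the limit $\epsilon \downarrow 0$. Since $|f_\epsilon(y) - \epsilon - |y|| \leq \epsilon$, we have $f_\epsilon(v) - \epsilon \to |Du|$ uniformly, hence in $L^1(B_R(x_0))$. On $\{v \neq 0\}$ the above explicit formula yields pointwise convergence of $Df_\epsilon(v)$ to $\bigl(\sum_{j,\alpha} v^j_\alpha Dv^j_\alpha\bigr)/|v|$, and the a.e.\ pointwise domination by $|Dv|\in L^1$ produces, via dominated convergence, an $L^1$-limit $g$. Testing against an arbitrary $\phi \in C^\infty_c(B_R(x_0))$ and passing to the limit on both sides of the integration-by-parts identity identifies $g$ as the weak derivative of $|Du|$. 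Combined with the preserved bound $|g| \leq |Dv|$, this delivers the desired inequality $|D|Du|| \leq |D^2 u|$ almost everywhere.

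The only delicate point — which is the main obstacle — is that on the zero set $\{Du = 0\}$ the chain-rule formula above is singular in the limit. This is handled by Stampacchia's classical result that the weak gradient of any Sobolev function vanishes a.e.\ on its zero set, applied here to each component of $v = Du$: it forces both $|Df_\epsilon(v)| \leq |Dv| = 0$ and, symmetrically after identification, $D|Du| = 0$ almost everywhere on $\{Du = 0\}$, so the inequality is trivial there. Everything else is a standard chain-rule plus dominated-convergence argument.
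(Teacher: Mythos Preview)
Your argument is correct and is the standard way to prove this version of Kato's inequality: regularise the norm via $f_\epsilon(y)=\sqrt{|y|^2+\epsilon^2}$, apply the $C^1$-chain rule for $v=Du\in W^{1,1}$, use Cauchy--Schwarz to get the uniform pointwise bound $|Df_\epsilon(v)|\le|Dv|$, and pass to the limit with dominated convergence together with Stampacchia's lemma on the zero set. One cosmetic remark: ``summing in $i$'' should more precisely be ``squaring and summing in $i$'', but the conclusion is unaffected.

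As for comparison with the paper: the paper does not supply a proof of this lemma at all. It is simply stated as a preliminary fact in the auxiliary material section, so there is nothing to compare against. Your proof stands on its own.
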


%%%%%%%%%%%%%%%%%%%%%%%%

The following result is extracted from~\cite[Chapter I, Proposition 3.1]{dibenedetto1993degenerate}.

\begin{mylem} \label{lem:parabolicsobolevembedding}
    Let~$\Omega\subset\R^n$ be a bounded domain and~$\Omega_T = \Omega\times(0,T)$. For any
    $$ u\in L^\infty(0,T;L^m(\Omega))\cap L^p(0,T;W^{1,p}_0(\Omega)) $$
    with~$m,p\geq 1$ there holds
    $$\iint_{\Omega_T}|u|^l\,\dx\dt \leq C\iint_{\Omega_T}|Du|^p\,\dx\dt \, \bigg(\sup\limits_{t\in(0,T)}\int_{\Omega\times\{t\}}|u|^m\,\dx \bigg)^{\frac{p}{n}},$$
    where~$l\coloneqq \frac{p(n+m)}{n}$ and~$C=C(m,n,p)$. 
\end{mylem}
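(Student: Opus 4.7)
The plan is to reduce the space-time integral bound to a pointwise-in-time inequality obtained by combining the classical Sobolev embedding on slices with a Hölder interpolation, and then to integrate in time using the $L^\infty_t L^m_x$ control. Concretely, for a.e.\ $t \in (0,T)$ the function $u(\cdot,t)$ belongs to $W^{1,p}_0(\Omega)$, so one works slice by slice and reassembles at the end.

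The key computation is the following. Assume first $1 \le p < n$ and set $p^\ast \coloneqq \frac{np}{n-p}$. Writing
\[
|u|^l = |u|^p \cdot |u|^{\frac{pm}{n}},
\]
and applying Hölder's inequality on $\Omega$ with the conjugate pair $\bigl(\tfrac{p^\ast}{p}, \tfrac{p^\ast}{p^\ast-p}\bigr) = \bigl(\tfrac{p^\ast}{p}, \tfrac{n}{n-p}\bigr)$, one checks directly that $\tfrac{pm}{n}\cdot\tfrac{p^\ast}{p^\ast-p} = m$ and $\tfrac{p^\ast - p}{p^\ast} = \tfrac{p}{n}$, so that
\[
\int_{\Omega} |u(\cdot,t)|^l \,\dx
\;\le\; \Bigl( \int_{\Omega} |u(\cdot,t)|^{p^\ast} \dx \Bigr)^{\!p/p^\ast}\!\!\!\!
\Bigl( \int_{\Omega} |u(\cdot,t)|^m \dx \Bigr)^{\!p/n}.
\]
The Sobolev–Gagliardo–Nirenberg inequality on $\Omega$, applied to $u(\cdot,t) \in W^{1,p}_0(\Omega)$, then yields
\[
\Bigl( \int_{\Omega} |u(\cdot,t)|^{p^\ast} \dx \Bigr)^{p/p^\ast} \le C(n,p)\int_{\Omega} |Du(\cdot,t)|^p \dx.
\]
Inserting this and bounding the $L^m$-integral by its essential supremum in time, followed by integration over $t \in (0,T)$, produces exactly the claimed estimate with $l = p + \frac{pm}{n} = \frac{p(n+m)}{n}$.

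The borderline and supercritical cases $p \ge n$ are handled essentially for free. For $p > n$, $W^{1,p}_0(\Omega) \hookrightarrow L^\infty(\Omega)$ gives even stronger pointwise control; for $p = n$, one replaces $p^\ast$ by any sufficiently large finite exponent $r$ (whose existence is guaranteed by $W^{1,n}_0 \hookrightarrow L^r$) and the same Hölder partition still produces the correct exponent $l$, with the constant $C$ depending only on $m,n,p$. The only delicate step is the bookkeeping of the exponents in the Hölder split — which is the main (modest) obstacle — and, in applications where $\Omega$ varies, the fact that the Sobolev constant for $W^{1,p}_0$ on an arbitrary bounded domain is dimension-dependent only, so $C$ indeed depends solely on $(m,n,p)$ as stated.
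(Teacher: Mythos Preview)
Your argument is correct and is exactly the standard proof (the paper does not give its own proof but merely cites DiBenedetto, \emph{Degenerate Parabolic Equations}, Chapter~I, Proposition~3.1, where the same slice-wise H\"older--Sobolev split is used). One small slip: the conjugate pair should read $\bigl(\tfrac{p^\ast}{p},\tfrac{p^\ast}{p^\ast-p}\bigr)=\bigl(\tfrac{n}{n-p},\tfrac{n}{p}\bigr)$, not $\bigl(\tfrac{p^\ast}{p},\tfrac{n}{n-p}\bigr)$; your subsequent checks $\tfrac{pm}{n}\cdot\tfrac{p^\ast}{p^\ast-p}=m$ and $\tfrac{p^\ast-p}{p^\ast}=\tfrac{p}{n}$ are nonetheless correct, so the display that follows is valid.
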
 

%%%%%%%%%%%%%%%%%%%%%%%%

The subsequent variant of Sobolev's inequality for convex domains can be inferred from~\cite[Lemma~2.3]{boundaryellipticpq}. 
\begin{mylem} \label{lem:sobolev}
    Let~$\Omega \subset\R^n$ be a bounded and convex set and~$1\leq p<n$. For any~$u\in W^{1,p}(\Omega)$ there holds
    \begin{align*}
        \|u\|_{L^{p^{*}}(\Omega)} \leq C (\diam \Omega)^n |\Omega|^{\frac{1}{n}-1} \bigg(\fint_{\Omega}|Du|^p\,\dx \bigg)^{\frac{1}{p}} + \bigg(\fint_{\Omega}|u|^p\,\dx \bigg)^{\frac{1}{p}},
    \end{align*}
    where~$p^* = \frac{np}{n-p}$, and~$C=C(n,p)$ denotes a positive constant.
\end{mylem}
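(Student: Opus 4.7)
The plan is to combine a convex-set version of the Poincaré integral representation with the Hardy--Littlewood--Sobolev inequality for Riesz potentials, and then to handle the mean value of $u$ separately via Jensen's inequality. First I would assume $u\in C^1(\overline{\Omega})$ and recover the general case $u\in W^{1,p}(\Omega)$ by density at the very end.

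\textbf{Step 1 (Poincaré representation on convex sets).} I start from the identity
$u(x)-(u)_\Omega = \tfrac{1}{|\Omega|}\int_\Omega (u(x)-u(y))\,\dy$ and write $u(x)-u(y)$ as the line integral of $Du$ along the segment $[x,y]\subset\Omega$, which is contained in $\Omega$ by convexity. Rewriting the outer integral in polar coordinates centered at $x$, bounding the radial length by $\diam\Omega$, and unrolling to an integral over $\Omega$ yields the classical pointwise bound
\[
|u(x)-(u)_\Omega|\leq \frac{(\diam\Omega)^n}{n\,|\Omega|}\int_\Omega \frac{|Du(y)|}{|x-y|^{n-1}}\,\dy
= \frac{(\diam\Omega)^n}{n\,|\Omega|}\,I_1\!\big(|Du|\chi_\Omega\big)(x),
\]
where $I_1$ is the Riesz potential of order $1$.

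\textbf{Step 2 (Riesz potential bound).} For $1<p<n$ the Hardy--Littlewood--Sobolev inequality gives
$\|I_1 f\|_{L^{p^*}(\R^n)}\leq C(n,p)\|f\|_{L^p(\R^n)}$ applied to $f=|Du|\chi_\Omega$. For the endpoint $p=1$, the weak-type bound for $I_1$ is insufficient, so I would instead invoke the classical Gagliardo--Nirenberg inequality $\|v\|_{L^{1^*}(\R^n)}\leq C(n)\|Dv\|_{L^1(\R^n)}$ applied to $v=(u-(u)_\Omega)\chi_\Omega$ after a convex extension argument, which yields the same estimate up to constants. Either way, combining with Step~1,
\[
\|u-(u)_\Omega\|_{L^{p^*}(\Omega)}\leq C(n,p)\,\frac{(\diam\Omega)^n}{|\Omega|}\,\|Du\|_{L^p(\Omega)}.
\]

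\textbf{Step 3 (Conversion to averaged norms and the mean term).} Using $\|Du\|_{L^p(\Omega)}=|\Omega|^{1/p}(\fint_\Omega|Du|^p\,\dx)^{1/p}$ and the identity $\tfrac{1}{p}-\tfrac{1}{p^*}=\tfrac{1}{n}$, the previous estimate rearranges into the form
\[
\bigg(\fint_\Omega |u-(u)_\Omega|^{p^*}\,\dx\bigg)^{\frac{1}{p^*}}\leq C(n,p)\,(\diam\Omega)^n |\Omega|^{\frac{1}{n}-1}\bigg(\fint_\Omega|Du|^p\,\dx\bigg)^{\frac{1}{p}},
\]
which reproduces the first term on the right-hand side of the claim. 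For the mean value, Jensen's inequality gives $|(u)_\Omega|\leq (\fint_\Omega|u|^p\,\dx)^{1/p}$, accounting precisely for the second summand in the statement. A final application of the triangle inequality $\|u\|_{L^{p^*}}\leq \|u-(u)_\Omega\|_{L^{p^*}}+\|(u)_\Omega\|_{L^{p^*}}$, together with the appropriate normalization, yields the stated inequality; a standard density argument extends it from $C^1(\overline\Omega)$ to $W^{1,p}(\Omega)$.

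\textbf{Main obstacles.} The chief technical point is the convex-set pointwise representation in Step~1, whose proof needs the ray from $x$ to stay in $\Omega$ (hence convexity) and a careful polar-coordinate estimate to produce the sharp factor $(\diam\Omega)^n/|\Omega|$ rather than an unrelated geometric quantity. The second delicate point is the endpoint $p=1$, where HLS only yields a weak-type bound and one must use instead the $W^{1,1}\hookrightarrow L^{1^*}$ Sobolev embedding (e.g., via the isoperimetric/coarea route) to close the argument with the same structure of constants.
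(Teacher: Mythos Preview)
The paper does not prove this lemma; it merely cites \cite[Lemma~2.3]{boundaryellipticpq}. Your argument via the pointwise Poincar\'e representation on convex sets combined with the mapping properties of the Riesz potential $I_1$ is the standard route to such inequalities (cf.\ Gilbarg--Trudinger, Lemmas~7.12 and~7.16) and is correct for $1<p<n$.

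Two small points. First, your Step~3 in fact derives the inequality with the \emph{averaged} $L^{p^*}$-norm $\big(\fint_\Omega|u|^{p^*}\,\dx\big)^{1/p^*}$ on the left-hand side, and this is the version consistent with the exponent $|\Omega|^{1/n-1}$; the paper's application of the lemma (with $p=\tfrac{2n}{n+2}$, $p^*=2$, producing mean integrals on both sides) confirms that this is the intended reading. Second, your handling of the endpoint $p=1$ via a ``convex extension argument'' for $(u-(u)_\Omega)\chi_\Omega$ is the one genuinely sketchy step: multiplying by $\chi_\Omega$ introduces a surface term in $D v$, and a generic extension will not preserve the sharp geometric factor $(\diam\Omega)^n/|\Omega|$. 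A cleaner endpoint argument combines your Step~1 bound with the weak-type $(1,1^*)$ estimate for $I_1$ and then upgrades to the strong estimate via a Maz'ya-type truncation. In any case, the paper only invokes the lemma with $p=\tfrac{2n}{n+2}>1$, so the endpoint is not actually needed here.
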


%%%%%%%%%%%%%%%%%%%%%%%%%%%%%%%%%

\section{Gradient estimate for smooth solutions} \label{sec:aprioriest}

In this section, we aim to derive an~\textit{a priori}~$L^\infty$-gradient estimate, assuming that both the solutions as well as the data are smooth. To be precise, we consider smooth, classical solutions
$$u\in C^3(\overline{\Omega}_T\cap Q_{2\rho}(z_0),\R^N)$$
with the property that~$u\equiv 0$ on the lateral boundary~$(\partial\Omega)_T\cap Q_{2\rho}(z_0)$ to a parabolic system of the type
\begin{equation} \label{apriori-equation}
    \partial_t u^i - \divv \big(a(|Du|) Du^i \big) = f^i \qquad\mbox{in~$\Omega_T\cap Q_{2\rho}(z_0)$,\quad for ~$i=1,\ldots,N$},
\end{equation}
where~$\Lambda_{2\rho}(t_0)\subset(0,T)$ and the coefficients~$a$ satisfy assumptions~\eqref{A-Voraussetzung1} --~\eqref{A-Voraussetzung2} with parameter~$\mu\in(0,1]$. The main result of this section is the following proposition, stating a quantitative local~$L^\infty$-gradient estimate. 

%%%%%%%%%%%%%%%%%%%%%%%%%%%%%%%%%%%

\begin{myproposition} [\textit{A priori} $L^\infty$-gradient estimate] \label{prop:apriorigradientsuper}
    Let~$\Omega\subset\R^n$ be a bounded and convex~$C^2$-set and~$Q_{2\rho}(z_0)$ be an arbitrary cylinder with~$\Lambda_{2\rho}(t_0)\subset(0,T)$. Moreover, let~$u\in C^3(\overline{\Omega}_T \cap Q_{2\rho}(z_0),\R^N )$ be a solution to the parabolic system~\eqref{apriori-equation} with
$$u\equiv 0 \qquad\mbox{on the lateral boundary~$(\partial\Omega)_T\cap Q_{2\rho}(z_0)$},$$
where~$2\leq p\leq q < p+\frac{4}{n+2}$, and~$f\in C^1(\overline{\Omega}_T \cap Q_{2\rho}(z_0),\R^N)$ with~$\supp f \Subset\Omega\times\R$. Then, there exists a positive constant~$C=C(C_1,C_2,N,n,p,q,\sigma,\Theta_{\rho}(x_0))\geq 1$, such that the quantitative $L^\infty$-gradient estimate
\begin{align} \label{est:apriorigradientsuper}
  &\sup\limits_{\Omega_T \cap Q_\rho(z_0)} |Du| \\
  & \qquad\,\, \leq C\bigg( \Big( 1+\|f\|^{\frac{(n+2)(n+2+\sigma)}{\sigma}}_{L^{n+2+\sigma}(\Omega_T\cap Q_{2\rho}(z_0))}\rho^{n+2} \Big)  \fiint_{\Omega_T \cap Q_{2\rho}(z_0)} (1+|Du|^2)^{\frac{p}{2}} \,\dx\dt \bigg) ^{\frac{2}{4+n(p-q)(n+2)}} \nonumber 
\end{align} 
holds true.
\end{myproposition}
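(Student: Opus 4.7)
The argument follows a Moser-type iteration applied to the quantity $\mathcal{V} := (1+|Du|^2)^{1/2}$, capped by an interpolation step that converts the resulting estimate in terms of the $q$-energy into one involving the $p$-energy. The starting point is a weighted Caccioppoli-type inequality for the second spatial derivatives of $u$ that holds up to the lateral boundary (Lemma~\ref{lem:eesecondorder}). To derive it, one differentiates the classical system~\eqref{apriori-equation} with respect to $x_s$, tests the resulting equation for $D_s u$ with $\eta^2 \mathcal{V}^{\gamma-2} D_s u$ for a smooth space-time cutoff $\eta$ and a free parameter $\gamma\ge 2$, and sums over $s$. The ellipticity bound of Lemma~\ref{lem:ellipticity} turns the diffusion term into the favourable quantity $\iint \eta^2 \mathcal{V}^{p-2}|D^2 u|^2 \mathcal{V}^{\gamma-2}\,\dx\dt$, while the inhomogeneity is handled by Hölder's inequality and the assumption $f\in L^{n+2+\sigma}$. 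The delicate point is the boundary integral produced by integrating by parts in $\Omega$: since $u\equiv 0$ on the lateral boundary, the tangential derivatives of $u$ vanish there, and the Grisvard differential geometric identity (as exploited in~\cite[Lemma~2.1]{boundaryparabolic}) reduces the boundary contribution to an integral weighted by the second fundamental form of $\partial\Omega$. The convexity of $\Omega$ fixes its sign in our favour, so this boundary term can be discarded.

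\textbf{Reverse Hölder inequality and Moser iteration.} Applying the parabolic Sobolev embedding of Lemma~\ref{lem:parabolicsobolevembedding} to $\eta\,\mathcal{V}^{\gamma/2}$—with the time-supremum and the $L^2$-gradient of the second derivative provided by the Caccioppoli estimate—one obtains a reverse Hölder-type inequality (Lemma~\ref{lem:reverseholder}) of schematic shape
\[
    \biggl(\fiint_{Q_r \cap \Omega_T} \mathcal{V}^{\kappa \gamma + p-q}\,\dx\dt\biggr)^{1/\kappa}
    \le \frac{C\gamma^a}{(R-r)^b}\fiint_{Q_R\cap\Omega_T} \mathcal{V}^{\gamma + q-p}\,\dx\dt + (\text{lower order in }f),
\]
with gain factor $\kappa = (n+2)/n$ on concentric cylinders $Q_r\subset Q_R\subset Q_{2\rho}(z_0)$. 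At this stage only the weaker range $q<p+\tfrac{4}{n}$ is needed, since this is exactly what guarantees that the Sobolev gain $\kappa\gamma$ outpaces the $(q-p)$-deficit between the two integrands. Iterating along geometric sequences $r_j\downarrow\rho$ and $\gamma_j = \kappa^j\gamma_0 \to \infty$, and controlling the accumulated constants by the algebraic Lemma~\ref{lem:algebraic}, yields an intermediate $L^\infty$-gradient bound whose right-hand side involves the $q$-energy of $Du$ together with a power of $\|f\|_{L^{n+2+\sigma}}$. This is the content of the estimate~\eqref{est:iterationfünf} anticipated in Remark~\ref{remarknachhauptresultatdrei}, with scaling deficit $\tfrac{1}{q}\cdot\tfrac{2q}{4+n(p-q)}$.

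\textbf{Interpolation from the $q$-energy to the $p$-energy.} To conclude~\eqref{est:apriorigradientsuper} one must trade the $q$-energy for the $p$-energy. Inserting $|Du|^q \le \|Du\|^{q-p}_{L^\infty(Q_R\cap\Omega_T)}\,|Du|^p$ into the previous estimate on an intermediate cylinder $Q_R$ with $\rho<R<2\rho$ and using the a priori boundedness of $|Du|$ granted by $u\in C^3$, one pulls out a power of $\|Du\|_{L^\infty(Q_R\cap\Omega_T)}$ that, via Young's inequality with a small parameter, can be reabsorbed into the supremum on the left precisely when the resulting exponent $(q-p)\cdot \tfrac{1}{q}\cdot\tfrac{2q}{4+n(p-q)} <1$. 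A direct computation shows this condition is equivalent to the upper bound $q<p+\tfrac{4}{n+2}$ imposed in~\eqref{parameter}. The classical iteration Lemma~\ref{lem:iteration} applied on the radii $\rho\le r<R\le 2\rho$ then eliminates the intermediate-radius dependence and delivers~\eqref{est:apriorigradientsuper} with the announced scaling deficit. The main obstacles in this program are accordingly twofold: first, the boundary Caccioppoli estimate, where the Grisvard identity, the $C^2$-regularity of $\partial\Omega$, and convexity must conspire to kill the boundary term; second, the interpolation step, where the sharper constraint $q<p+\tfrac{4}{n+2}$ from~\eqref{parameter}—rather than the weaker $q<p+\tfrac{4}{n}$ used for Sobolev—becomes indispensable, and the $C^3$-regularity of $u$ is needed so that the reabsorbed $L^\infty$-norm is finite to begin with.
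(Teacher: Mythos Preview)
Your proposal is correct and follows exactly the paper's strategy: the Caccioppoli estimate of Lemma~\ref{lem:eesecondorder} (with the boundary term discarded via the Grisvard identity and convexity), the reverse H\"older inequality of Lemma~\ref{lem:reverseholder}, Moser iteration yielding~\eqref{est:iterationfünf} with the $q$-energy, and the interpolation step of Section~\ref{subsec:interpolation} using Young's inequality and Lemma~\ref{lem:iteration}; your computation that the reabsorption exponent condition $(q-p)\cdot\tfrac{2}{4+n(p-q)}<1$ is equivalent to $q<p+\tfrac{4}{n+2}$ is precisely the content of~\eqref{quantestpnorm}.

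One technical point deserves attention: you invoke Lemma~\ref{lem:parabolicsobolevembedding}, which requires the test function to lie in $L^p(0,T;W^{1,p}_0)$, but $\eta\,\mathcal{V}^{\gamma/2}$ does \emph{not} vanish on $\partial\Omega\cap B_{2\rho}(x_0)$ (only the tangential part of $Du$ vanishes there, not $|Du|$). The paper instead applies the elliptic Sobolev inequality for convex domains (Lemma~\ref{lem:sobolev}) slice-wise in $t$---which needs no zero trace and brings in a second appearance of $\Theta_\rho(x_0)$---and then couples it with the $\sup_t$-term from the energy estimate~\eqref{est:reverseholderzwei} to recover the same parabolic gain $\kappa=(n+2)/n$.
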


%%%%%%%%%%%%%%%%%%%%%%%%%%%%%%%%%%%%%%%%%%%%%%%%

\subsection{Energy estimate involving second order derivatives} \label{subsec:secorderderivatives}

By following the strategy in~\cite[Proposition~3.2]{boundaryparabolic} and taking Lemma~\ref{lem:ellipticity} into account, we infer the following energy estimate verbatim. 

%%%%%%%%%%%%%%%%%%%%%%%%

\begin{mylem} \label{lem:eesecondorder}
Let the assumptions of Proposition~\ref{prop:apriorigradientsuper} hold true. Then, for any non-negative increasing~$\Phi\in C^1(\R_{\geq 0},\R_{\geq 0})$, any~$\phi\in C^\infty_0(B_{2\rho}(x_0))$, and any non-negative~$\eta\in W^{1,\infty}(\Lambda_{2\rho}(t_0),\R_{\geq 0})$, there holds
\begin{align} \label{est:eesecondorder}
    \iint_{ \Omega_T\cap Q_{2\rho}(z_0)} & \phi^2\eta \big(\partial_t \Psi(|Du|) + \Phi(|Du|) (\mu^2+|Du|^2)^{\frac{p-2}{2}}|D^2 u|^2 \big)  \,\dx\dt \\
    & \leq C \iint_{\Omega_T\cap Q_{2\rho}(z_0)} |D\phi|^2 \eta \Phi(|Du|) (\mu^2+|Du|^2)^{\frac{q}{2}}\,\dx\dt \nonumber \\
    &\quad + C \iint_{\Omega_T\cap Q_{2\rho}(z_0)} \phi^2 \eta \Phi(|Du|) Du\cdot Df \,\dx\dt, \nonumber
\end{align}
 with a positive constant~$C=C(C_1,C_2,p,q) \geq 1$, where
\begin{equation} \label{psifunction}
    \Psi(s) \coloneqq \int_{0}^{s} \Phi(r) r\,\dr.
\end{equation}
    
\end{mylem}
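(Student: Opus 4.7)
The plan is to follow, almost verbatim, the strategy of \cite[Proposition~3.2]{boundaryparabolic}, exploiting that the Uhlenbeck structure together with the ellipticity and growth bounds of Lemma~\ref{lem:ellipticity} adapt without essential modification to the $(p,q)$-setting. First I differentiate the classical system~\eqref{approxsystem} in the spatial direction $x_\alpha$---which is legitimate since $u \in C^3$ and $f \in C^1$---to obtain
\begin{equation*}
\partial_t D_\alpha u^i - \sum_{\beta,\gamma,j} D_\beta \bigl[A^{ij}_{\beta\gamma}(Du)\, D_\gamma D_\alpha u^j\bigr] = D_\alpha f^i,
\end{equation*}
where $A^{ij}_{\beta\gamma}(\xi) = \partial_{\xi^j_\gamma}[a(|\xi|)\xi^i_\beta]$ is the structure matrix from~\eqref{est:ableitungvonA}. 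I then test this identity against $\phi^2 \eta\, \Phi(|Du|)\, D_\alpha u^i$, sum over $i,\alpha$, and integrate over $\Omega_T \cap Q_{2\rho}(z_0)$.

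The parabolic term is handled by the chain rule, using $\tfrac{1}{2}\Phi(|Du|)\partial_t |Du|^2 = \partial_t \Psi(|Du|)$ with $\Psi$ as in~\eqref{psifunction}; this generates the contribution $\iint \phi^2 \eta\, \partial_t \Psi(|Du|)$ on the left. On the elliptic side, one integrates by parts in $x_\beta$; the derivative $D_\beta$ of the test function produces three bulk pieces plus a lateral boundary integral. The piece where $D_\beta$ hits the inner factor $D_\alpha u^i$ combines with $A^{ij}_{\beta\gamma}$ to form precisely the quadratic form of Lemma~\ref{lem:ellipticity}, whose lower bound yields the good term $\phi^2 \eta\, \Phi(|Du|)(\mu^2+|Du|^2)^{(p-2)/2}|D^2 u|^2$. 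The piece involving the factor $\Phi'(|Du|)\ge 0$ is, thanks to the monotonicity of $\Phi$ and the explicit Uhlenbeck form of $A^{ij}_{\beta\gamma}$, either non-negative or controlled by the good term via Kato's inequality (Lemma~\ref{lem:kato}), which replaces $|D|Du||$ by $|D^2 u|$. Finally, the piece where $D_\beta$ hits $\phi^2$ is estimated via the upper growth bound of Lemma~\ref{lem:ellipticity} followed by Young's inequality, producing $\varepsilon$ times the good term plus the claimed $|D\phi|^2 \Phi(|Du|)(\mu^2+|Du|^2)^{q/2}$ contribution; choosing $\varepsilon$ sufficiently small allows reabsorption. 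The right-hand side $D_\alpha f^i$ simply contracts with $\phi^2\eta\Phi(|Du|)D_\alpha u^i$ to give the forcing term $\iint \phi^2 \eta\, \Phi(|Du|)\, Du\cdot Df$.

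The decisive step, and the place where the $C^2$-regularity of $\partial\Omega$, the convexity of $\Omega$, and the homogeneous Dirichlet datum $u\equiv 0$ on $(\partial\Omega)_T \cap Q_{2\rho}(z_0)$ all come into play, is the treatment of the surviving boundary integral on $(\partial\Omega\cap B_{2\rho}(x_0))\times\Lambda_{2\rho}(t_0)$. Here I would invoke the differential-geometric identity of Grisvard recorded in \cite[Lemma~2.1]{boundaryparabolic}: the vanishing of $u$ on $\partial\Omega$ forces all tangential derivatives of $u$ to vanish there, so the boundary integrand reduces to an expression involving the second fundamental form of $\partial\Omega$ evaluated on tangential vectors, which is non-negative by convexity. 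Consequently the boundary term has the favorable sign and may be discarded. This is the principal obstacle of the argument: without convexity and the $C^2$-regularity of $\partial\Omega$, the boundary contribution would have indefinite sign and an up-to-the-boundary Caccioppoli inequality of the form~\eqref{est:eesecondorder} could not be obtained. Collecting all contributions and absorbing gives~\eqref{est:eesecondorder}.
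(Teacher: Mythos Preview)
Your proposal is correct and follows essentially the same approach as the paper's own argument: the paper explicitly states that the proof is identical to \cite[Proposition~3.2]{boundaryparabolic}, with the only change being the use of the $(p,q)$-ellipticity and growth bounds from Lemma~\ref{lem:ellipticity}, and it highlights the same decisive ingredient---Grisvard's differential-geometric identity combined with convexity and the homogeneous Dirichlet condition---to discard the lateral boundary term. Your sketch in fact supplies more detail than the paper does, but the underlying strategy coincides.
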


%%%%%%%%%%%%%%%%%%%%%%%%

\begin{remark} \upshape
The proof of Lemma~\ref{est:eesecondorder} is identical to the one in~\cite[Proposition~3.2]{boundaryparabolic} up to the point where we use the nonstandard~$(p,q)$-growth condition of our system~\eqref{pde}. In fact, the energy estimate from~\cite[Proposition~3.2]{boundaryparabolic} continues to hold true under our nonstandard~$(p,q)$-growth condition. By exploiting the ellipticity and growth condition from Lemma~\ref{lem:ellipticity}, we then establish the energy estimate~\eqref{est:eesecondorder}. We note that the assumed convexity and~$C^2$-regularity of the domain~$\Omega$, as well as the homogeneous Dirichlet boundary condition on the lateral boundary for~$u$, as stated in Lemma~\ref{lem:eesecondorder}, are essential for applying a certain identity from differential geometry that is taken from Grisvard~\cite[(3,1,1,8)]{grisvard2011elliptic}. This result is particularly important as it enables us to discard a boundary integral term due to its non-negative contribution. This idea stems from~\cite[Lemma~2.1 and~(3.14)]{boundaryparabolic} and also~\cite[Lemma~2.2 and~(3.15)]{boundaryellipticpq}.
\end{remark}

%%%%%%%%%%%%%%%%%%%%%%%%%%%%%%%%%%%%%%%%%%%%%%%%

\subsection{A reverse Hölder-type inequality} \label{subsec:reverseholder}

In this section, we derive the following inequality of reverse Hölder-type, which is the starting point for the subsequent Moser iteration procedure performed in the following Section~\ref{subsec:moser}. 

%%%%%%%%%%%%%%%%%%%%%%%%

\begin{mylem} \label{lem:reverseholder}
    Let the assumptions of Lemma~\ref{lem:eesecondorder} hold true and let~$\alpha\geq 0$ denote an arbitrarily chosen parameter. Then, for any~$\phi\in C^\infty_0(B_{2\rho}(x_0),[0,1])$ and any~$\eta\in W^{1,\infty}(\Lambda_{2\rho}(t_0),[0,1])$ with~$\eta(t_0-4\rho^2)=0$ as well as~$\partial_t\eta\geq 0$, the following reverse Hölder-type inequality holds:
    \begin{align} \label{est:reverseholder}
    & \bigg( \fiint_{\Omega_T\cap Q_{2\rho}(z_0)}  (\phi^2\eta)^{\frac{n+2}{n}}
     (\mu^2+|Du|^2)^{\frac{p+2\alpha+2\delta}{2}} \,\dx\dt \bigg)^{\frac{n}{n+2}} \\
     & \qquad \leq C (p+2\alpha)^{3\frac{n+2+\sigma}{\sigma}} \widetilde{C} \fiint_{\Omega_T\cap Q_{2\rho}(z_0)} \big( (\mu^2+|Du|^2)^{\frac{q+2\alpha}{2}}+1 \big) \,\dx\dt \nonumber
    \end{align}
with~$C=C(C_1,C_2,N,n,p,q,\sigma,\Theta_{\rho}(x_0))$. Here, we abbreviated
\begin{align} \label{est:reverseholderfaktor}
    \widetilde{C} \coloneqq 1 + \rho^2\big(\|D\phi\|_{L^\infty(B_{2\rho}(x_0))} + \|\partial_t\eta\|_{L^\infty(\Lambda_{2\rho}(t_0))} \big) +  \bigg( \rho^{\sigma} \iint_{\Omega_T\cap Q_{2\rho}(z_0)} |f|^{n+2+\sigma}\,\dx\dt \bigg)^{\frac{2}{\sigma}}
\end{align}
and
\begin{align} \label{est:delta}
    \delta \coloneqq \frac{2(\alpha+1)}{n}.
\end{align}
\end{mylem}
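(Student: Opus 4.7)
The plan is to derive \eqref{est:reverseholder} by combining the second-order energy estimate of Lemma~\ref{lem:eesecondorder} with the parabolic Sobolev embedding of Lemma~\ref{lem:parabolicsobolevembedding} via a single Moser-type step. First I would choose $\Phi(s)=(\mu^2+s^2)^\alpha$ in Lemma~\ref{lem:eesecondorder}, so that \eqref{psifunction} gives $\Psi(s)\simeq(\mu^2+s^2)^{\alpha+1}/(2(\alpha+1))$. Integrating by parts in time and using $\eta(t_0-4\rho^2)=0$ together with $\partial_t\eta\geq 0$ turns the $\iint\phi^2\eta\,\partial_t\Psi$-term into a sup-in-time quantity $\sup_t\int\phi^2\eta(\mu^2+|Du|^2)^{\alpha+1}$ on the LHS, sending $\iint\phi^2\partial_t\eta\,\Psi$ to the RHS where it is absorbed into the ambient $(q+2\alpha)$-energy via the prefactor $\rho^2\|\partial_t\eta\|_{L^\infty}$.

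The source term $\iint\phi^2\eta\Phi(|Du|)Du\cdot Df$ on the RHS of Lemma~\ref{lem:eesecondorder} is treated by integrating $Df$ by parts in space (no boundary contribution appears since $\supp f\Subset\Omega\times\R$) and then applying Young's inequality with conjugate exponents balanced against the integrability $|f|^{n+2+\sigma}$. All $|D^2u|$-factors produced by the IBP are absorbed back into the $|D^2u|^2$-integral on the LHS of Lemma~\ref{lem:eesecondorder}, while the residual $f$-contribution distills into the factor $\bigl(\rho^\sigma\iint|f|^{n+2+\sigma}\bigr)^{2/\sigma}$ appearing in $\widetilde C$. Careful bookkeeping through the chain of Young's inequalities produces the power $(p+2\alpha)^{3(n+2+\sigma)/\sigma}$. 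The outcome is a Caccioppoli-type estimate bounding both $\sup_t\int\phi^2\eta(\mu^2+|Du|^2)^{\alpha+1}$ and $\iint\phi^2\eta(\mu^2+|Du|^2)^{(p-2)/2+\alpha}|D^2u|^2$ by $C(p+2\alpha)^{3(n+2+\sigma)/\sigma}\widetilde C\rho^{-2}\iint_{\Omega_T\cap Q_{2\rho}(z_0)}\bigl[(\mu^2+|Du|^2)^{(q+2\alpha)/2}+1\bigr]$.

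With this Caccioppoli bound in hand, I would introduce the auxiliary function $V:=(\mu^2+|Du|^2)^{(p+2\alpha)/4}$ so that Kato's inequality (Lemma~\ref{lem:kato}) yields $|DV|^2\lesssim (p+2\alpha)^2(\mu^2+|Du|^2)^{(p-2)/2+\alpha}|D^2u|^2$; hence $\iint\phi^2\eta|DV|^2$ is controlled by the Caccioppoli bound up to a factor $(p+2\alpha)^2$. I would then apply Lemma~\ref{lem:parabolicsobolevembedding} (with $p_{\text{Sob}}=m=2$) to $w:=\phi\sqrt{\eta}\,V$, which vanishes on $\partial\Omega\cap B_{2\rho}$ in the sense of traces because $u$ does; since $w$ need not directly lie in $W^{1,2}_0(B_{2\rho})$, the convex-domain Sobolev inequality (Lemma~\ref{lem:sobolev}) is used to obtain the full embedding on $\Omega\cap B_{2\rho}$, introducing the geometric factor $\Theta_\rho(x_0)$. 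The result is $\iint w^{2(n+2)/n}\leq C\iint|Dw|^2\bigl(\sup_t\int w^2\bigr)^{2/n}$; raising both sides to the power $n/(n+2)$ and applying Young's inequality with conjugate exponents $(n+2)/n$ and $(n+2)/2$ replaces the product by the sum $\iint|Dw|^2+\sup_t\int w^2$, which is bounded by the Caccioppoli estimate.

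The main obstacle is a twofold exponent-matching issue. The Sobolev LHS integrand $w^{2(n+2)/n}=(\phi^2\eta)^{(n+2)/n}(\mu^2+|Du|^2)^{(p+2\alpha)(n+2)/(2n)}$ has exponent on $(\mu^2+|Du|^2)$ exceeding the target $(p+2\alpha)/2+\delta$ with $\delta=2(\alpha+1)/n$ by the non-negative quantity $(p-2)/n$; splitting the target integral according to $\{(\mu^2+|Du|^2)\geq 1\}$ versus its complement (where the target integrand is bounded by a universal constant absorbable into the ``$+1$'' on the RHS) resolves this discrepancy. Second, the Sobolev-required $\sup_t\int w^2$ involves $(\mu^2+|Du|^2)^{(p+2\alpha)/2}$, whereas the first step only controls $(\mu^2+|Du|^2)^{\alpha+1}$; the pointwise identity $(\mu^2+|Du|^2)^{(p+2\alpha)/2}=(\mu^2+|Du|^2)^{\alpha+1}(\mu^2+|Du|^2)^{(p-2)/2}$ combined with $(\mu^2+|Du|^2)^{(p-2)/2}\leq 1+(\mu^2+|Du|^2)^{(q-2)/2}$ (using $p\leq q$) reduces this mismatch to a sup-in-time quantity on the $(q+2\alpha)$-energy, which is dealt with by a further Caccioppoli step with $\Phi(s)=(\mu^2+s^2)^{(q-p)/2+\alpha}$ or by direct interpolation against the $(q+2\alpha)$-energy already present on the RHS. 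The secondary challenge is the $f$-handling, where the precise power $3(n+2+\sigma)/\sigma$ on $(p+2\alpha)$ emerges from balancing the sequence of Young's inequalities; here the assumption $p\geq 2$ in \eqref{parameter} is essential to keep all factors absorbable.
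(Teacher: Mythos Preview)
Your overall architecture---Caccioppoli from Lemma~\ref{lem:eesecondorder} plus a Sobolev step---matches the paper, and your handling of the time term and the $f$-term by integration by parts is essentially right. However, there is a genuine gap in the Sobolev step when $p>2$. Applying Lemma~\ref{lem:parabolicsobolevembedding} with $m=p_{\mathrm{Sob}}=2$ to $w=\phi\sqrt{\eta}\,(\mu^2+|Du|^2)^{(p+2\alpha)/4}$ requires you to control $\sup_t\int\phi^2\eta(\mu^2+|Du|^2)^{(p+2\alpha)/2}$, whereas the Caccioppoli step only delivers $\sup_t\int\phi^2\eta(\mu^2+|Du|^2)^{\alpha+1}$; the discrepancy is exactly the exponent $(p-2)/2$. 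Neither of your proposed fixes closes: a ``further Caccioppoli step'' with $\Phi(s)=(\mu^2+s^2)^{(q-2)/2+\alpha}$ would indeed yield $\sup_t\int\mathcal G^{q+2\alpha}$, but the right-hand side of that Caccioppoli step carries the space-time energy $\iint\mathcal G^{2q-2+2\alpha}$, which for $q>2$ is strictly larger than the $(q+2\alpha)$-energy you are allowed on the right of \eqref{est:reverseholder}; and you cannot interpolate a sup-in-time quantity against a space-time integral already on the right.

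The paper resolves this mismatch by \emph{not} using the parabolic Sobolev lemma with $m=2$. Instead it applies the spatial Sobolev inequality (Lemma~\ref{lem:sobolev}) with exponent $\tfrac{2n}{n+2}$ (so that the Sobolev conjugate is $2$) directly to the function $\phi^{(n+2)/n}\mathcal G(Du)^{(p+2\alpha+2\delta)/2}$, and then splits the resulting $L^{2n/(n+2)}$-integral by H\"older with exponents $\bigl(\tfrac{n+2}{2},\tfrac{n+2}{n}\bigr)$ into a factor carrying $\phi^2\mathcal G^{n\delta}$ and a factor carrying $\chi^2\mathcal G^{p+2\alpha-2}|D^2u|^2$. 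The whole point of the definition $\delta=\tfrac{2(\alpha+1)}{n}$ is that $n\delta=2(\alpha+1)$, so the first factor---after multiplying by $\eta$ and taking the supremum in time---is \emph{exactly} the sup-term that Caccioppoli controls, for every $p\ge 2$. This H\"older decoupling is the missing idea in your argument; once you have it, the remainder of your outline (handling of $f$ via H\"older with $\tfrac{n+2+\sigma}{2}$, interpolation, and the choice of $\epsilon$ producing the power $3\tfrac{n+2+\sigma}{\sigma}$) goes through essentially as in the paper.
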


%%%%%%%%%%%%%%%%%%%%%%%%

\begin{remark} \upshape
   We note that the exponent~$p+2\alpha+2\delta$ on the left-hand side of estimate~\eqref{est:reverseholder} is strictly greater than the exponent~$q+2\alpha$ on the right-hand side, such that the estimate is indeed of reverse Hölder-type. This matter of fact can be verified via
   \begin{align} \label{est:linksrechts}
       p+2\alpha+2\delta &= p+2\alpha+\frac{4(\alpha+1)}{n} = p+2\alpha\Big(1+\frac{2}{n}\Big) + \frac{4}{n} \\
       &= q+2\alpha\Big(1+\frac{2}{n}\Big) + \frac{4}{n} + p-q > q+2\alpha\Big(1+\frac{2}{n}\Big) > q + 2\alpha, \nonumber
   \end{align}
   which follows from the assumption~$q<p+\frac{4}{n+2}$ according to~\eqref{parameter}. Indeed, the weaker assumption
   $$ 2 \leq p\leq q < p+\frac{4}{n}$$
   on the parameters actually turns out to be sufficient at this point. However, for the interpolation argument in Section~\ref{subsec:interpolation}, the stronger assumption~\eqref{parameter} is necessary. 
\end{remark}

%%%%%%%%%%%%%%%%%%%%%%%%

\begin{proof}
  In Lemma~\ref{lem:eesecondorder} we choose the non-negative and increasing function~$\Phi$ as follows
  \begin{equation}
      \Phi(s) = \widetilde{\Phi}\Big(\sqrt{\mu^2 + s^2}\Big),\qquad\mbox{where~}\widetilde{\Phi}(r) = r^{2\alpha} \chi^2(r) \quad\mbox{for~$\alpha \geq 0$}.
  \end{equation}
  Here,~$\chi\in C^1(R_{\geq 0},[0,1])$ denotes a smooth cut-off function subject to the properties 
\begin{align*}
    \begin{cases}
        \chi\equiv 0 & \mbox{on~$[0,1]$}, \\
        \chi\equiv 1 & \mbox{on~$[2,\infty)$}, \\
        0\leq \chi'\leq 2 & \mbox{on~$(1,2)$}.
    \end{cases}
\end{align*}
 Additionally, the spatial cut-off function~$\phi$ is chosen in a way such that it is non-negative and bounded by one, i.e. we assume~$\phi\in C^\infty_0(B_{2\rho}(x_0),[0,1])$. By abbreviating~$\G(\xi) = \sqrt{\mu^2+|\xi|^2}$ for~$\xi\in\R^{Nn}$, we thus have that~$ \Phi(|\xi|)=\widetilde{\Phi}(\G(\xi))$. Since~$\mu\in(0,1]$, we bound~$\Psi$ further from above by a change of variables and by calculating
  \begin{align} \label{psiupper}
      \Psi(|Du|) &= \int_{0}^{|Du|} (\mu^2+s^2)^{\alpha} s\chi^2\Big(\sqrt{\mu^2 + s^2}\Big)  \,\ds \\
      &= \int_{\mu}^{\G(Du)} r^{2\alpha+1} \chi^2(r)  \,\dr \nonumber \\
      &\leq \frac{1}{2(\alpha+1)} \G(Du)^{2(\alpha+1)}. \nonumber
  \end{align}
Moreover, again due to~$\mu\in(0,1]$, we also obtain the following lower bound
\begin{equation} \label{psilower}
    \Psi(|Du|) \geq \frac{1}{2(\alpha+1)} (\G(Du)^{2(\alpha+1)}-1). 
\end{equation}
We commence by treating the integral term containing the datum~$f$ on the right-hand side of estimate~\eqref{est:eesecondorder} in Lemma~\ref{lem:eesecondorder}. An integration by parts yields
\begin{align*}
    C & \iint_{\Omega_T\cap Q_{2\rho}(z_0)}  \phi^2 \eta \Phi(|Du|) Du\cdot Df \,\dx\dt \\
    &= - C \sum\limits_{\alpha=1}^{n}\sum\limits_{i=1}^{N} \iint_{\Omega_T\cap Q_{2\rho}(z_0)} \eta f^i \Big[\Phi(|Du|)\phi^2 u^i_{x_\alpha} \Big]_{x_\alpha} \,\dx\dt \\
    &= - C \sum\limits_{\alpha,\beta=1}^{n}\sum\limits_{i,j=1}^{N} \iint_{\Omega_T\cap Q_{2\rho}(z_0)} \eta \phi^2 f^i \bigg(\Phi(|Du|) \delta^{ij}_{\alpha\beta} + \Phi'(|Du|)\frac{u^i_{x_\alpha} u^j_{x_\beta}}{|Du|} \bigg) u^j_{x_\alpha x_\beta} \,\dx\dt \\
    &\quad - 2C \sum\limits_{\alpha=1}^{n}\sum\limits_{i=1}^{N} \iint_{\Omega_T\cap Q_{2\rho}(z_0)} \eta \phi f_i \Phi(|Du|) \phi_{x_\alpha} u^i_{x_\alpha} \,\dx\dt \\
    &\eqqcolon \foo{I} + \foo{II}
\end{align*}
with~$C=C(C_1,C_2,p,q)$. Both integral quantities~$\foo{I}$ and~$\foo{II}$ are bounded further from above, where we obtain for the former
\begin{align*}
    \foo{I} \leq C \iint_{\Omega_T\cap Q_{2\rho}(z_0)} \eta \phi^2 |f| \big(\Phi(|Du|) + \Phi'(|Du|)|Du| \big) |D^2u| \,\dx\dt
\end{align*}
with~$C=C(C_1,C_2,N,n,p,q)$. Due to our construction of~$\Phi$ and~$\chi$,  and also due to the definition of~$\G$, we have that
\begin{align} \label{propertieschi}
   \begin{cases}
       \chi'(\G(Du)) \neq 0 \quad \Longleftrightarrow \quad 1< \G(|Du|) < 2, & \, \\
    \chi(\G(Du)) \neq 0 \quad \,\, \Longleftrightarrow \quad \G(|Du|)>1.
   \end{cases} 
\end{align}
These properties allow us to further estimate 
\begin{align*}
    \Phi(|Du|) & + \Phi'(|Du|)|Du| \\
    &\leq \G(Du)^{2\alpha}\chi(\G(Du)) \big( (1+2\alpha)\chi(\G(Du)) + 2\chi'(\G(Du))|Du| \big) \\
    &\leq C(1+2\alpha)\chi(\G(Du))\G(Du)^{2\alpha} \\
    &\leq C(1+2\alpha)\chi(\G(Du))\G(Du)^{2\alpha + \frac{p-2}{2}}.
\end{align*}
We note that the exponent in the last quantity on the right-hand side of the preceding estimate is always positive according to our assumption~\eqref{parameter}. Exploiting this reasoning in the above estimate for~$\foo{I}$ and also applying Young's inequality, there holds
\begin{align*}
    \foo{I} &\leq C(1+2\alpha) \iint_{\Omega_T\cap Q_{2\rho}(z_0)} \eta \phi^2 |f| \chi(\G(Du))\G(Du)^{2\alpha + \frac{p-2}{2}} |D^2u| \,\dx\dt \\
    &\leq \mbox{$\frac{1}{2}$} \iint_{\Omega_T\cap Q_{2\rho}(z_0)} \eta  \phi^2\chi^2(\G(Du))^{p+2(\alpha-1)}|D^2 u|^2  \,\dx\dt \\
    &\quad + C (1+2\alpha)^2 \iint_{\Omega_T\cap Q_{2\rho}(z_0)} \eta \phi^2 |f|^2 \G(Du)^{2\alpha} \,\dx\dt \\
    &\eqqcolon \mbox{$\frac{1}{2}$} \foo{I}_1 + C(1+2\alpha)^2 \foo{I}_2
\end{align*}
with constant~$C=C(C_1,C_2,N,n,p,q)$. Similarly, the second term~$\foo{II}$ is bounded from above by using that~$q\geq 2$ as follows
\begin{align*}
    \foo{II} &\leq C \iint_{\Omega_T\cap Q_{2\rho}(z_0)} \eta \phi |f| \Phi(|Du|) |D\phi| |Du| \,\dx\dt \\
    &\leq C \iint_{\Omega_T\cap Q_{2\rho}(z_0)} \eta \phi |f| \chi^2(\G(Du))\G(Du)^{2\alpha+1}  |D\phi| \,\dx\dt \\
    &\leq C \iint_{\Omega_T\cap Q_{2\rho}(z_0)} \eta \phi |f| \chi^2(\G(Du))\G(Du)^{2\alpha+\frac{q}{2}}  |D\phi| \,\dx\dt \\
    &\leq \mbox{$\frac{1}{2}$} \foo{I}_2 + C \iint_{\Omega_T\cap Q_{2\rho}(z_0)} \eta \chi^4(\G(Du))
    \G(Du)^{q+2\alpha} |D\phi|^2 \,\dx\dt
\end{align*}
with~$C=C(C_1,C_2,N,n,p,q)$. After reabsorbing the term~$\foo{I}_1$ into the left-hand side of~\eqref{est:eesecondorder} and using the definition of~$\foo{I}_2$ and the preceding upper bound for the remaining quantity~$\foo{II}$, we thus obtain
\begin{align} \label{est:reverseholdereins}
      & \iint_{ \Omega_T  \cap Q_{2\rho}(z_0)} \phi^2\eta \big(\partial_t \Psi(|Du|) + \chi^2(\G(Du)) \G(Du)^{p+2(\alpha-1)} |D^2 u|^2  \big)  \,\dx\dt \\
    &\quad \leq C \iint_{\Omega_T\cap Q_{2\rho}(z_0)} |D\phi|^2 \eta \chi^4(\G(Du)) \G(Du)^{q+2\alpha} \,\dx\dt \nonumber \\
    &\quad \quad + C(1+2\alpha)^2\iint_{\Omega_T\cap Q_{2\rho}(z_0)} \eta \phi^2 |f|^2 \G(Du)^{2\alpha}  \,\dx\dt \nonumber
\end{align}
with~$C$ exhibiting the same dependence as before. As a next step, we treat the term involving the time derivative in~\eqref{est:reverseholdereins}. Thereby, we choose~$\eta = \eta_1\eta_2$, where~$\eta_1 \in W^{1,\infty}(\Lambda_{2\rho}(t_0),[0,1])$ is a cut-off function satisfying~$\eta_1(t_0-4\rho^2)=0$ and~$\partial_t \eta_1 \geq 0$, while~$\eta_2 \in W^{1,\infty}([t_0-4\rho^2,t_0],[0,1])$ is given by
\begin{align*}
    \eta_2(t) \coloneqq \begin{cases}
        1 & \mbox{for $t\in[t_0-4\rho^2,\tau]$}, \\
        1-\frac{t-\tau}{\epsilon} & \mbox{for $t\in(\tau,\tau+\epsilon]$}, \\
        0 & \mbox{for $t\in[\tau+\epsilon,t_0]$},
    \end{cases}
\end{align*}
for arbitrarily chosen parameters~$\epsilon>0,\tau>0$ subject to the relation~$t_0 -4\rho^2<\tau<\tau+\epsilon<t_0$. An integration by parts therefore yields
\begin{align*}
    &\iint_{ \Omega_T  \cap Q_{2\rho}(z_0)} \phi^2\eta_1 \eta_2 \partial_t \Psi(|Du|) \,\dx\dt \\
    & \quad= - \iint_{ \Omega_T  \cap Q_{2\rho}(z_0)} \phi^2[\partial_t \eta_1\, \eta_2 + \eta_1 \partial_t \eta_2] \Psi(|Du|) \,\dx\dt \\
    &\quad= - \iint_{ \Omega_T  \cap Q_{2\rho}(z_0)} \phi^2 \partial_t \eta_1\, \eta_2 \Psi(|Du|) \,\dx\dt + \frac{1}{\epsilon} \iint_{ \Omega_T  \cap B_{2\rho}(x_0)\times(\tau,\tau+\epsilon)} \phi^2\eta_1 \Psi(|Du|) \,\dx\dt
\end{align*}
for any~$t_0 -4\rho^2<\tau<\tau+\epsilon<t_0$. Passing to the limit~$\epsilon\downarrow 0$ and exploiting estimate~\eqref{psiupper}, we further obtain for any~$t_0 -4\rho^2<\tau<t_0$ the bound
\begin{align*}
     &\int_{\Omega \cap B_{2\rho}(x_0) \times\{\tau\}} \phi^2\eta_1 \Psi(|Du|) \,\dx\dt \\
     &\quad + \iint_{ \Omega_T  \cap B_{2\rho}(x_0)\times(t_0-4\rho^2,\tau)} \phi^2 \eta_1 \chi^2(\G(Du)) \G(Du)^{p+2(\alpha-1)} |D^2 u|^2  \,\dx\dt \\
    &\leq C \iint_{\Omega_T\cap Q_{2\rho}(z_0)} |D\phi|^2 \eta \chi^4(\G(Du)) \G(Du)^{q+2\alpha} \,\dx\dt \\
    &\quad + C(1+2\alpha)^2\iint_{\Omega_T\cap Q_{2\rho}(z_0)} \eta \phi^2 |f|^2 \G(Du)^{2\alpha}  \,\dx\dt \\
    &\quad + \frac{C}{2(\alpha+1)} \iint_{\Omega_T\cap Q_{2\rho}(z_0)} \phi^2 \partial_t \eta_1\G(Du)^{2(\alpha+1)} \,\dx\dt. 
\end{align*}
Since~$\tau\in(t_0 -4\rho^2,t_0)$ is arbitrarily, we first pass to the limit~$\tau\uparrow t_0$ in the second integral on the left hand-side of the preceding estimate, while in the first term we take the supremum with respect to~$\tau\in(t_0 -4\rho^2,t_0)$. This way, we achieve
\begin{align*}
     \sup\limits_{\tau\in\Lambda_{2\rho}(t_0)} & \int_{\Omega \cap B_{2\rho}(x_0) \times\{\tau\}} \phi^2\eta_1 \Psi(|Du|) \,\dx\dt \\
     & + \iint_{ \Omega_T  \cap Q_{2\rho}(z_0)} \phi^2 \eta_1 \chi^2(\G(Du)) \G(Du)^{p+2(\alpha-1)} |D^2 u|^2  \,\dx\dt \\
    &\leq C \iint_{\Omega_T\cap Q_{2\rho}(z_0)} |D\phi|^2 \eta_1 \chi^4(\G(Du)) \G(Du)^{q+2\alpha} \,\dx\dt \\
    &\quad + C(1+2\alpha)^2\iint_{\Omega_T\cap Q_{2\rho}(z_0)} \eta_1 \phi^2 |f|^2 \G(Du)^{2\alpha}  \,\dx\dt \\
    &\quad + \frac{C}{2(\alpha+1)} \iint_{\Omega_T\cap Q_{2\rho}(z_0)} \phi^2 \partial_t \eta_1\G(Du)^{2(\alpha+1)} \,\dx\dt 
\end{align*}
with a constant~$C$ as before. Due to~\eqref{psilower}, the sup-term on the left-hand side may be bounded further below and the~$-1$ is shifted onto the right-hand side. Moreover, we multiply both sides of the inequality by~$p+2\alpha$ and take mean values. As a result, we achieve the estimate 
\begin{align} \label{est:reverseholderzwei}
      &\sup\limits_{\tau\in\Lambda_{2\rho}(t_0)}~\fint_{\Omega \cap B_{2\rho}(x_0) \times\{\tau\}} \phi^2\eta_1 \G(Du)^{2(\alpha+1)} \,\dx\dt \\
     & \quad + (p+2\alpha)\rho^2 \fiint_{ \Omega_T  \cap Q_{2\rho}(z_0)} \phi^2 \eta_1 \chi^2(\G(Du)) \G(Du)^{p+2(\alpha-1)} |D^2 u|^2  \,\dx\dt \nonumber \\
    &\leq C (p+2\alpha) \rho^2 \fiint_{\Omega_T\cap Q_{2\rho}(z_0)} \big( |D\phi|^2 \chi^4(\G(Du))  \G(Du)^{q+2\alpha} + \partial_t\eta_1\, \G(Du)^{2(\alpha+1)} \big) \,\dx\dt \nonumber \\
     &\quad + C(1+2\alpha)^3 \rho^2 \iint_{\Omega_T\cap Q_{2\rho}(z_0)} \eta_1 \phi^2 |f|^2 \G(Du)^{2\alpha}  \,\dx\dt + C \nonumber \\
     & \leq C(p+2\alpha) \foo{III}_1 + C(p+2\alpha)^3 \foo{III}_2 + C \eqqcolon \foo{III} \nonumber 
\end{align} 
with~$C=C(C_1,C_2,N,n,p,q)$, where we abbreviated
$$ \foo{III}_1 \coloneqq \rho^2 \fiint_{\Omega_T\cap Q_{2\rho}(z_0)} \big( \|D\phi\|^2_{L^\infty(B_{2\rho}(x_0))} \chi^4(\G(Du))  \G(Du)^{q+2\alpha} + \partial_t\eta_1\, \G(Du)^{2(\alpha+1)} \big) \,\dx\dt $$
as well as
$$ \foo{III}_2 \coloneqq \rho^2 \iint_{\Omega_T\cap Q_{2\rho}(z_0)} \eta_1 \phi^2 |f|^2 \G(Du)^{2\alpha}  \,\dx\dt. $$

Next, we aim to perform an interpolation argument. We recall the quantity~$\delta=\frac{2(\alpha+1)}{n}$ defined in~\eqref{est:delta}. The particular choice of this parameter becomes clear in the course of the proof. A calculation yields for any~$(x,t)\in (\Omega_T\cap Q_{2\rho}(z_0))$ the following
\begin{align*}
    \Big|  D \Big[&\phi^{\frac{n+2}{n}}  \chi^2(\G(Du))\G(Du)^{\frac{p+2\alpha+2\delta}{2}} \Big]\Big| \\ 
    &= \Big|\mbox{$\frac{n+2}{n}$} \phi^{\frac{2}{n}} D\phi \chi^2(\G(Du))\G(Du)^{\frac{p+2\alpha+2\delta}{2}} \\
    &\quad + 2 \phi^{\frac{n+2}{n}} \chi(\G(Du))\chi'(\G(Du)) \G(Du)^{-1} |Du| D|Du| \\
    &\quad + \mbox{$\frac{p+2\alpha+2\delta}{2}$} \phi^{\frac{n+2}{n}} \chi^2(\G(Du)) \G(Du)^{\frac{p+2\alpha+2\delta-4}{2}} |Du| D|Du| \Big| \\
    &\leq C(p+2\alpha) \phi^{\frac{n+2}{n}} \chi(\G(Du)) \G(Du)^{\frac{p+2\alpha+2\delta-2}{2}} |D^2u| \\
    &\quad + C |D\phi| \phi^{\frac{2}{n}} \chi^2(\G(Du)) \G(Du)^{\frac{p+2\alpha+2\delta}{2}},
\end{align*}
where we used the properties~\eqref{propertieschi}. Instead of estimating the factor~$\chi^2(\G(Du))$ by one, we keep this term in the last estimate. The constant~$C$ in the previous estimate only depends on the space dimension~$n$. We now exploit the previous calculation and employ the Sobolev inequality from Lemma~\ref{lem:sobolev}. Hereby, we note that, according to the reasoning provided in Section~\ref{subsec:convex}, the Sobolev constant from the same lemma can be controlled from above by a constant only depending on~$n$ and the geometric quantity~$\Theta_{\rho}(x_0)$. Additionally, since~$\phi\in C^\infty_0(B_{2\rho}(x_0))$ is smooth and compactly supported within~$B_{2\rho}(x_0)$, we readily obtain by Taylor's theorem the estimate
$$\|\phi\|_{L^\infty(B_{2\rho}(x_0))}\leq \rho \|D\phi\|_{L^\infty(B_{2\rho}(x_0))}.$$
Taking these preliminary considerations into account, we apply Lemma~\ref{lem:sobolev} with parameter~$p=\frac{2n}{n+2}$, which yields~$p^* = 2$, to achieve for any~$t\in \Lambda_{2\rho}(t_0)$ the following inequality
\begin{align*}
    &\fint_{\Omega\cap B_{2\rho}(x_0)} \phi^{\frac{2(n+2)}{n}} \chi^4(\G(Du))\G(Du)^{p+2\alpha+2\delta}\,\dx \\
    &\leq C \rho^2 \bigg(\fint_{\Omega\cap B_{2\rho}(x_0)} \Big| D \Big[\phi^{\frac{n+2}{n}} \chi^2(\G(Du))\G(Du)^{\frac{p+2\alpha+2\delta}{2}} \Big]\Big|^{\frac{2n}{n+2}}  \,\dx \bigg)^{\frac{n+2}{n}} \\
    &\quad + 2 \bigg(\fint_{\Omega\cap B_{2\rho}(x_0)} \Big| \phi^{\frac{n+2}{n}} \chi^2(\G(Du))\G(Du)^{\frac{p+2\alpha+2\delta}{2}}\Big|^{\frac{2n}{n+2}}  \,\dx \bigg)^{\frac{n+2}{n}} \\
    &\leq C \rho^2 (p+2\alpha)^2 \bigg(\fint_{\Omega\cap B_{2\rho}(x_0)} \Big| \phi^{\frac{n+2}{n}} \chi(\G(Du)) \G(Du)^{\frac{p+2\alpha+2\delta-2}{2}} |D^2u|\Big|^{\frac{2n}{n+2}}  \,\dx \bigg)^{\frac{n+2}{n}} \\
    &\quad + C \rho^2 \|D\phi\|^2_{L^\infty(B_{2\rho}(x_0))} \bigg(\fint_{\Omega\cap B_{2\rho}(x_0)} \Big| \phi^{\frac{2}{n}} \chi^2(\G(Du))\G(Du)^{\frac{p+2\alpha+2\delta}{2}}\Big|^{\frac{2n}{n+2}}  \,\dx \bigg)^{\frac{n+2}{n}},  
\end{align*}
with a constant~$C=C(C_1,C_2,N,n,p,q,\Theta_{\rho}(x_0))$. For both integral terms on the right-hand side of the preceding inequality, we apply Hölder's inequality with exponents~$(\frac{n+2}{2},\frac{n+2}{n})$ to the functions
\begin{align*}
\begin{cases}
    \Big|\phi^{\frac{n+2}{n}} \G(Du)^{\delta} |D^2u|\Big|^{\frac{2n}{n+2}}, \quad \Big| \chi(\G(Du)) \G(Du)^{\frac{p+2\alpha-2}{2}} |D^2u|\Big|^{\frac{2n}{n+2}}, & \\
    \Big| \phi^{\frac{2}{n}} \G(Du)^{\delta}\Big|^{\frac{2n}{n+2}}, \quad   \Big| \chi^2(\G(Du))\G(Du)^{\frac{p+2\alpha}{2}}\Big|^{\frac{2n}{n+2}}. &
    \end{cases}
\end{align*}
This way, we obtain
\begin{align} \label{est:reverseholderdrei}
   & \fint_{\Omega\cap B_{2\rho}(x_0)} \phi^{\frac{2(n+2)}{n}} 
   \G(Du)^{p+2\alpha+2\delta}\,\dx \leq C \rho^2\foo{IV}_1 \cdot\foo{IV}^{\frac{2}{n}}_2 
\end{align}
with~$C=C(C_1,C_2,N,n,p,q,\Theta_{\rho}(x_0))$, where 
\begin{align*}
     \foo{IV}_1 &\coloneqq (p+2\alpha)^2  \fint_{\Omega\cap B_{2\rho}(x_0)} \phi^{2} \chi^2(\G(Du)) \G(Du)^{p+2\alpha-2}|D^2u|^2\,\dx \\
     & \qquad + \|D\phi\|^2_{L^\infty(B_{2\rho}(x_0))} \fint_{\Omega\cap B_{2\rho}(x_0)} \chi^4(\G(Du)) \G(Du)^{p+2\alpha} \,\dx 
\end{align*}
and
$$ \foo{IV}_2 \coloneqq \fint_{\Omega\cap B_{2\rho}(x_0)} \phi^2 \G(Du)^{n\delta}\,\dx. $$

Due to our choice of~$\delta=\frac{2(\alpha+1)}{n}$, the exponent in the integral~$\foo{IV}_2$ equals the integrability exponent that is present in the sup-term of the energy estimate~\eqref{est:reverseholderzwei}. Therefore, we multiply both sides of the preceding inequality by~$\eta^{\frac{n+2}{n}}_1$ and take mean values with respect to~$t\in\Lambda_{2\rho}(t_0)$. Moreover, we apply the energy estimate~\eqref{est:reverseholderzwei}, which yields
\begin{align} \label{est:reverseholdervier}
    \fiint_{\Omega_T\cap Q_{2\rho}(z_0)} & (\phi^2\eta_1)^{\frac{n+2}{n}}
    \G(Du)^{p+2\alpha+2\delta} \,\dx\dt \\
    &\leq C\rho^2 \fint_{\Lambda_{2\rho}(t_0)}  \eta_1 \foo{IV}_1 \cdot (\eta_1 \foo{IV}_2)^{\frac{2}{n}} \,\dt \nonumber \\
    &\leq C \rho^2\bigg( \sup\limits_{\tau\in\Lambda_{2\rho}(t_0)}~\fint_{\Omega \cap B_{2\rho}(x_0) \times\{\tau\}} \phi^2\eta_1 \G(Du)^{2(\alpha+1)} \,\dx\dt\bigg)^{\frac{2}{n}} \fint_{\Lambda_{2\rho}(t_0)} \eta_1 \foo{IV}_1 \,\dt \nonumber \\
    &\leq C\rho^2 \foo{III}^{\frac{2}{n}} \fint_{\Lambda_{2\rho}(t_0)} \eta_1 \foo{IV}_1 \,\dt \nonumber
\end{align}
with~$C=C(C_1,C_2,N,n,p,q,\Theta_{\rho}(x_0))$. Exploiting the properties~\eqref{propertieschi} once more, we bound the right-hand side of the preceding estimate further from above by
\begin{align*}
    \fint_{\Lambda_{2\rho}(t_0)} \eta_1 \foo{IV}_1 \,\dt &\leq (p+2\alpha)^2  \fiint_{\Omega_T\cap Q_{2\rho}(z_0)} \phi^{2} \eta_1 \chi^2(\G(Du)) \G(Du)^{p+2\alpha-2}|D^2u|^2\,\dx\dt \\
    &\quad + \|D\phi\|^2_{L^\infty(B_{2\rho}(x_0))} \fiint_{\Omega_T\cap Q_{2\rho}(z_0)} \eta_1 \chi^4(\G(Du)) \G(Du)^{p+2\alpha} \,\dx\dt \\
    &\leq (p+2\alpha)^2  \fiint_{\Omega_T \cap Q_{2\rho}(z_0)} \phi^{2} \eta_1 \chi^2(\G(Du)) \G(Du)^{p+2(\alpha-1)}|D^2u|^2\,\dx\dt \\
    &\quad + \|D\phi\|^2_{L^\infty(B_{2\rho}(x_0))} \fiint_{\Omega_T\cap Q_{2\rho}(z_0)} \chi^4(\G(Du)) \G(Du)^{q+2\alpha} \,\dx\dt \\
    &\leq C \frac{(p+2\alpha)}{\rho^2} \foo{III}.
\end{align*}
Therefore, the left-hand side in~\eqref{est:reverseholderdrei} can be estimated as follows
\begin{align} \label{est:reverseholderfünf}
     \bigg( \fiint_{\Omega_T\cap Q_{2\rho}(z_0)}  (\phi^2\eta_1)^{\frac{n+2}{n}} 
     \G(Du)^{p+2\alpha+2\delta} \,\dx\dt \bigg)^{\frac{n}{n+2}} \leq C(p+2\alpha)\foo{III}
\end{align}
with constant~$C=C(C_1,C_2,N,n,p,q,\Theta_{\rho}(x_0))$. \,\\

We now treat the quantity $\foo{III}_2$ present in $\foo{III}$. Here, we employ Hölder's inequality with exponents $(\frac{n+2+\sigma}{2},\frac{n+2+\sigma}{n+\sigma})$. Additionally, we recall the comments made in Section \ref{subsec:convex}, allowing us to use the bound $\frac{\rho^{n+2}}{|\Omega_T\cap Q_{2\rho}(z_0)|} \leq C(n) \Theta_{\rho}(x_0)$. This way, we obtain
\begin{align} \label{est:fabkürz}
    \foo{III}_2 &\leq \Theta^{\frac{2}{n+2+\sigma}}_{\rho}(x_0) \underbrace{\bigg( \rho^{\sigma} \iint_{\Omega_T\cap Q_{2\rho}(z_0)} |f|^{n+2+\sigma}\,\dx\dt \bigg)^{\frac{2}{n+2+\sigma}}}_{ \eqqcolon  [[f]]^2_{n+2+\sigma;\Omega_T\cap Q_{2\rho}(z_0)}} \\
    &\quad \cdot\bigg( \fiint_{\Omega_T\cap Q_{2\rho}(z_0)} \big( \eta_1\phi^2 \G(Du)^{2\alpha} \big)^{\frac{n+2+\sigma}{n+\sigma}}\,\dx\dt \bigg)^{\frac{n+\sigma}{n+2+\sigma}} . \nonumber
\end{align}
Since the integrability exponent differs from the one in the left-hand side of~\eqref{est:reverseholderdrei}, we use an interpolation argument. In fact, we first employ Lemma~\ref{lem:interpolation} with parameters~$p=1, q=\frac{n+2+\sigma}{n+\sigma},r=\frac{n+2}{n}$, and then use Young's inequality, which yields for any~$\epsilon>0$ the following estimate
\begin{align*}
    \foo{III}_2 &\leq \Theta^{\frac{2}{n+2+\sigma}}_{\rho}(x_0) [[f]]^2_{n+2+\sigma;\Omega_T\cap Q_{2\rho}(z_0)} \bigg( \epsilon 
    \bigg( \fiint_{\Omega_T\cap Q_{2\rho}(z_0)} \big( \eta_1\phi^2 \G(Du)^{2\alpha} \big)^{\frac{n+2}{n}}\,\dx\dt \bigg)^{\frac{n+2}{n}} \\
    &\quad + C\epsilon^{-\frac{n+2}{\sigma}} \fiint_{\Omega_T\cap Q_{2\rho}(z_0)} \eta_1\phi^2 \G(Du)^{2\alpha} \,\dx\dt
    \bigg)
\end{align*}
with $C=C(n,p,\sigma)$. In order to estimate the right-hand side from above, we exploit the fact that
$$2\alpha \leq q+2\alpha, \qquad 2\alpha \frac{n+2}{n}\leq q+2\alpha+2\delta $$
and split the domain of integration into two subsets as follows: the first subset consists of points where $\G(Du)> 1$ holds true, allowing us to use the above identities, while on its compliment we have $\G(Du)\leq 1$, which enables us to bound the entire integrand by one. By additionally exploiting the bounds for $\eta_1$ and $\phi$, we thus achieve 
\begin{align*} 
        &\foo{III}_2  \\
        &\quad\leq \Theta^{\frac{2}{n+2+\sigma}}_{\rho}(x_0) [[f]]^2_{n+2+\sigma;\Omega_T\cap Q_{2\rho}(z_0)} \bigg( \epsilon 
    \bigg( \fiint_{\Omega_T\cap Q_{2\rho}(z_0)} \big( (\eta_1\phi^2)^{\frac{n+2}{n}} \G(Du)^{p+2\alpha+2\delta} + 1 \big)\,\dx\dt \bigg)^{\frac{n}{n+2}} \nonumber \\
    &\quad\quad + C\epsilon^{-\frac{n+2}{\sigma}} \fiint_{\Omega_T\cap Q_{2\rho}(z_0)} \big( \G(Du)^{q+2\alpha}+1 \big) \,\dx\dt
    \bigg). \nonumber
\end{align*}
By combining the estimate~\eqref{est:reverseholderfünf} and the preceding estimate for~$\foo{III}_2$, we infer
\begin{align*}
    & \bigg( \fiint_{\Omega_T\cap Q_{2\rho}(z_0)}  (\phi^2\eta_1)^{\frac{n+2}{n}} %\chi^4(\G(Du))
     \G(Du)^{p+2\alpha+2\delta} \,\dx\dt \bigg)^{\frac{n}{n+2}} \\
     &\leq  \epsilon C (p+2\alpha)^3 \Theta^{\frac{2}{n+2+\sigma}}_{\rho}(x_0) [[f]]^2_{n+2+\sigma;\Omega_T\cap Q_{2\rho}(z_0)} \bigg( \fiint_{\Omega_T\cap Q_{2\rho}(z_0)}  (\phi^2\eta_1)^{\frac{n+2}{n}}
     \G(Du)^{p+2\alpha+2\delta} \,\dx\dt \bigg)^{\frac{n}{n+2}}   \\
     &\quad+ C (p+2\alpha)^3\Theta^{\frac{2}{n+2+\sigma}}_{\rho}(x_0) [[f]]^2_{n+2+\sigma;\Omega_T\cap Q_{2\rho}(z_0)} \epsilon^{-\frac{n+2}{\sigma}} \fiint_{\Omega_T\cap Q_{2\rho}(z_0)} \big( \G(Du)^{q+2\alpha}+1 \big) \,\dx\dt \\
     & \quad + C (p+2\alpha) \foo{III}_1  + C
\end{align*}
with a constant~$C=C(C_1,C_2,N,n,p,q,\Theta_{\rho}(x_0))$. At this point, the free parameter~$\epsilon>0$ is chosen in such a way that there holds
$$\epsilon C (p+2\alpha)^3 \Theta^{\frac{2}{n+2+\sigma}}_{\rho}(x_0) [[f]]^2_{n+2+\sigma;\Omega_T\cap Q_{2\rho}(z_0)} = \mbox{$\frac{1}{2}$},  $$
allowing us to re-absorb the first term on the right-hand side of the previous estimate into the left-hand side. The second quantity on the right-hand side, which also involves the parameter~$\epsilon$, then yields
\begin{align*}
    \mbox{$\frac{1}{2}$} \epsilon^{-\frac{n+2}{\sigma}-1} &= (p+2\alpha)^3\Theta^{\frac{2}{n+2+\sigma}}_{\rho}(x_0) [[f]]^2_{n+2+\sigma;\Omega_T\cap Q_{2\rho}(z_0)} \epsilon^{-\frac{n+2}{\sigma}} \\
    &= C\big( (p+2\alpha)^3 
    [[f]]^2_{n+2+\sigma;\Omega_T\cap Q_{2\rho}(z_0)}\big)^{\frac{n+2+\sigma}{\sigma}}
\end{align*}
for some constant~$C=C(C_1,C_2,N,n,p,q,\sigma,\Theta_{\rho}(x_0))$. Thus, the preceding estimate overall turns into
\begin{align*}
    & \bigg( \fiint_{\Omega_T\cap Q_{2\rho}(z_0)}  (\phi^2\eta_1)^{\frac{n+2}{n}}
     \G(Du)^{p+2\alpha+2\delta} \,\dx\dt \bigg)^{\frac{n}{n+2}} \\
     & \quad \leq C (p+2\alpha)^{3\frac{n+2+\sigma}{\sigma}} \bigg(\foo{III}_1 + [[f]]^{2\frac{n+2+\sigma}{\sigma}}_{n+2+\sigma;\Omega_T\cap Q_{2\rho}(z_0)} \fiint_{\Omega_T\cap Q_{2\rho}(z_0)} \big( \G(Du)^{q+2\alpha}+1 \big) \,\dx\dt  +1 \bigg)
\end{align*}
with~$C=C(C_1,C_2,N,n,p,q,\sigma,\Theta_{\rho}(x_0))$. Finally, we employ Young's inequality in the term~$\foo{III}_1$ as well as the estimate~$\chi(\G(Du))\leq 1$, which implies the claimed reverse Hölder inequality as stated in~\eqref{est:reverseholder}.
\end{proof}

%%%%%%%%%%%%%%%%%%%%%%%%%%%%%%%%%%%%%%%%%%%%%%%%

\subsection{Moser iteration procedure} \label{subsec:moser}

As before, we use the abbreviation~$\G(Du) = (\mu^2+|Du|^2)^{\frac{1}{2}}$. By rewriting inequality~\eqref{est:reverseholder} according to~\eqref{est:linksrechts}, there holds
    \begin{align} \label{est:iterationeins}
    &  \fiint_{\Omega_T\cap Q_{2\rho}(z_0)}  (\phi^2\eta)^{\frac{n+2}{n}} %\chi^4(\G(Du))
     \G(Du)^{q+2\alpha (1+\frac{2}{n} ) + \frac{4}{n} + p-q} \,\dx\dt \\
     & \quad \leq C \bigg( (p+2\alpha)^{3\frac{n+2+\sigma}{\sigma}} \widetilde{C} \fiint_{\Omega_T\cap Q_{2\rho}(z_0)} \big( \G(Du) ^{q+2\alpha}+1 \big) \,\dx\dt \bigg)^{\frac{n+2}{n}} \nonumber.
    \end{align}
 We now define a sequence of parameters~$(\alpha_k)_{k\in\N_0}$ with~$\alpha_0 \coloneqq 0$ and
 $$ \alpha_k \coloneqq 2 \alpha_{k-1} \Big(1+\frac{2}{n} \Big) + \frac{4}{n} 
+ p - q $$
 for any~$k\in\N$. Then, it follows inductively that there holds
 $$\alpha_k = \Big( 1+\frac{n(p-q)}{4} \Big) \Big( \Big(1+\frac{2}{n} \Big)^k - 1 \Big) $$
for any~$k\in\N_0$. Further rewriting~\eqref{est:iterationeins}, we achieve
\begin{align} \label{est:iterationzwei}
     &  \fiint_{\Omega_T\cap Q_{2\rho}(z_0)}  (\phi^2\eta)^{\frac{n+2}{n}}
     \G(Du)^{q+2\alpha_k} \,\dx\dt \\
     & \quad \leq \bigg( C (p+2\alpha)^{3\frac{n+2+\sigma}{\sigma}} \widetilde{C} \fiint_{\Omega_T\cap Q_{2\rho}(z_0)} \big( \G(Du) ^{q+2\alpha_{k-1}}+1 \big) \,\dx\dt \bigg)^{\frac{n+2}{n}} \nonumber.
\end{align}
Next, we consider arbitrarily chosen radii~$r,s$, such that~$\rho\leq r<s\leq 2\rho$, and set
$$ \rho_k \coloneqq r + \frac{(s-r)}{2^k},\qquad \Q_k \coloneqq Q_{\rho_k}(z_0) $$
for any $k\in\N_0$. Moreover, we specify the cut-off functions $\phi=\phi_k$ and $\eta=\eta_k$. The spatial cut-off functions $\phi_k\in C^\infty_0(B_{\rho_k}(x_0),[0,1])$ are chosen in such a way that $\phi_k \equiv 1$ on~$B_{\rho_{k+1}}(x_0)$ and the standard estimate $|D\phi_k| \leq \frac{2^{k+2}}{s-r}$ is satisfied. The cut-off functions in time $\eta_k\in W^{1,\infty}(\Lambda_{\rho_k}(t_0))$ are chosen adequately so that there holds $\eta_k(t_0-\rho^2_k)=0$, as well as $\eta_k \equiv 1$ on $\Lambda_{\rho_{k+1}}(t_0)$, and also~$0\leq \partial_t\eta_k\leq \frac{2^{2(k+2)}}{(s-r)^2}$. By recalling the constant $\widetilde{C}$ from Lemma \ref{lem:reverseholder} and by exploiting the fact that $\alpha_{k-1} < \alpha_{k}$, we thus obtain
\begin{align} \label{est:iterationdrei}
     &  \frac{|\Omega_T \cap \Q_{k}|}{ |\Omega_T \cap \Q_{k-1}| } \fiint_{\Omega_T\cap \Q_{k}}  (\phi^2\eta)^{\frac{n+2}{n}}
     \G(Du)^{q+2\alpha_k} \,\dx\dt \\
     & \quad \quad \leq \bigg( 4^k  \delta \rho^2 \frac{(1+\alpha_k)^{3\frac{n+2+\sigma}{\sigma}}}{(s-r)^2}  \fiint_{\Omega_T\cap \Q_{k-1}} \big( \G(Du) ^{q+2\alpha_{k-1}}+1 \big) \,\dx\dt \bigg)^{\frac{n+2}{n}} \nonumber,
\end{align}
with~$C=C(C_1,C_2,N,n,p,q,\sigma, \Theta_\rho(x_0))$, where~$\delta \coloneqq C \big([[f]]^{2\frac{n+2+\sigma}{\sigma}}_{n+2+\sigma;\Omega_T\cap Q_{2\rho}(z_0)} + 1 \big)$. The factor on the left-hand side of the preceding estimate~\eqref{est:iterationdrei} is controlled from below by
\begin{align*}
    \frac{C}{\Theta_\rho(x_0)} \leq \frac{|\Omega_T \cap Q_{\rho}(x_0)|}{|Q_{2\rho}(x_0)|} \leq \frac{|\Omega_T \cap \Q_{k}|}{ |\Omega_T \cap \Q_{k-1}| }
\end{align*}
for some constant~$C=C(n)$. Next, for any~$k\in\N$, we define quantities
$$ A_k \coloneqq \fiint_{\Q_k} \G(Du)^{q+2\alpha_k} \,\dx\dt. $$
 The estimate~\eqref{est:iterationdrei} thus yields
\begin{align*}
    A_k \leq \bigg( 4^k  \delta \rho^2 \frac{(p+2\alpha)^{3\frac{n+2+\sigma}{\sigma}}}{(s-r)^2} \bigg)^{\frac{n+2}{n}}  (A_{k-1}+1)^{\frac{n+2}{n}}
\end{align*}
for all~$k\in\N$, where~$\delta>0$ is a constant with the same structure as in~\eqref{est:iterationdrei}. This inequality can be iterated to obtain
\begin{align*}
    A_k &\leq \bigg( 4^k  \delta \rho^2 \frac{(1+\alpha_k)^{3\frac{n+2+\sigma}{\sigma}}}{(s-r)^2} \bigg)^{\frac{n+2}{n}} \bigg( \bigg(  4^{k-1}  \delta \rho^2 \frac{(1+\alpha_{k-1})^{3\frac{n+2+\sigma}{\sigma}}}{(s-r)^2} \bigg)^{\frac{n+2}{n}} (A_{k-2}+1)+1 \bigg)^{\frac{n+2}{n}} \\
    &\leq \bigg( \bigg(  4^{k-1}  2\delta \rho^2 \frac{(1+\alpha_{k-1})^{3\frac{n+2+\sigma}{\sigma}}}{(s-r)^2} \bigg)^{\frac{n+2}{n}} (A_{k-2}+1)\bigg)^{\frac{n+2}{n}} \\
    &\leq \prod_{j=1}^{k} \bigg(  4^{j}  2\delta \rho^2 \frac{(1+\alpha_{j})^{3\frac{n+2+\sigma}{\sigma}}}{(s-r)^2} \bigg)^{(\frac{n+2}{n})^{k-j+1}}  (A_0 + 1)^{(\frac{n+2}{n})^k}
\end{align*}
for any~$k\in\N$. After taking both sides to the power~$\frac{1}{q+2\alpha_k}$, there holds
\begin{align} \label{est:iterationvier}
    A^{\frac{1}{q+2\alpha_k}}_k \leq \prod_{j=1}^{k} \bigg(  4^{j}  2\delta \rho^2 \frac{(1+\alpha_{j})^{3\frac{n+2+\sigma}{\sigma}}}{(s-r)^2} \bigg)^{ \frac{ (\frac{n+2}{n})^{k-j+1}}{q+2\alpha_k}}  (A_0 + 1)^{ \frac{(\frac{n+2}{n})^k}{q+2\alpha_k}}.
\end{align}
Regarding the exponent in the last quantity, we calculate
\begin{align*}
    \lim\limits_{k\to\infty} \frac{(\frac{n+2}{n})^k}{q+2\alpha_k} = \lim\limits_{k\to\infty} \frac{(\frac{n+2}{n})^k}{q + 2\big( 1+\frac{n(p-q)}{4} \big) \Big( \Big(1+\frac{2}{n} \Big)^k - 1 \Big) } = \frac{1}{2 + \frac{n(p-q)}{2}} = \frac{2}{4+n(p-q)}.
\end{align*}
Consequently, by passing to the limit, we have
\begin{align*}
    \lim\limits_{k\to\infty} (A_0 + 1)^{ \frac{(\frac{n+2}{n})^k}{q+2\alpha_k}} = \bigg( \fiint_{\Omega_T\cap \Q_s} ( \G(Du)^{q} + 1 )\,\dx\dt \bigg) ^{\frac{2}{4+n(p-q)}}
\end{align*}
for the second factor on the right-hand side of~\eqref{est:iterationvier}. For the treatment of the first factor on the right-hand side of~\eqref{est:iterationvier}, we abbreviate
$$0 < \kappa \coloneqq \frac{4+n(p-q)}{2}  \leq 2, $$
where the lower and upper bound are a consequence of assumption \eqref{parameter}. Thus, we may rewrite and estimate $\alpha_k$ as follows
$$ \alpha_k = \frac{\kappa}{2} \Big(\Big(\frac{n+2}{n}\Big)^k -1 \Big) \leq \Big(\frac{n+2}{n}\Big)^k -1  $$
for any~$k\in\N$. By further denoting
$$ \widetilde{\kappa} \coloneqq \frac{2\delta\rho^2}{(s-r)^2} = \frac{2C \rho^2}{(s-r)^2} \big([[f]]^{2\frac{n+2+\sigma}{\sigma}}_{n+2+\sigma;\Omega_T\cap Q_{2\rho}(z_0)} + 1 \big) $$
and also
$$  \beta \coloneqq 4 \Big( \frac{n+2}{n} \Big)^{3 \frac{n+2+\sigma}{\sigma}} \geq 1, $$
we therefore have that
$$ 4^j 2\delta \rho^2 \frac{(1+\alpha_j)^{3 \frac{n+2+\sigma}{\sigma}}}{(s-r)^2} \leq \widetilde{\kappa} \beta^j  $$
for any~$j=1,\ldots,k$. This allows us to bound the first factor on the right-hand side of~\eqref{est:iterationvier} by
\begin{align*}
    \prod_{j=1}^{k} \bigg(  4^{j}  2\delta \rho^2 \frac{(1+\alpha_{j})^{3\frac{n+2+\sigma}{\sigma}}}{(s-r)^2} \bigg)^{ \frac{ (\frac{n+2}{n})^{k-j+1}}{q+2\alpha_k}} &\leq \prod_{j=1}^{k} \widetilde{\kappa}^{\frac{ (\frac{n+2}{n})^{k-j+1}}{q+2\alpha_k}} \prod_{j=1}^{k} \beta ^{ \frac{ j(\frac{n+2}{n})^{k-j+1}}{q+2\alpha_k}} \\
    &\leq \prod_{j=1}^{k} \widetilde{\kappa}^{\frac{ (\frac{n+2}{n})^{k-j+1}}{\kappa\left((\frac{n+2}{n}\right)^k-1)}} \prod_{j=1}^{k} \beta ^{ \frac{ j(\frac{n+2}{n})^{k-j+1}}{\kappa((\frac{n+2}{n})^k-1)}}. 
\end{align*}
In order to treat the first quantity in the preceding estimate, we apply the first identity~\eqref{est:algeins} from Lemma~\ref{lem:algebraic} with the data~$(\widetilde{\kappa},\frac{n+2}{n},\kappa)$, which yields
\begin{align*}
    \prod_{j=1}^{k} \widetilde{\kappa}^{\frac{ (\frac{n+2}{n})^{k-j+1}}{\kappa ((\frac{n+2}{n})^k-1)}} = \widetilde{\kappa} ^{\frac{n+2}{2\kappa}}. 
\end{align*}
The second quantity is treated according to the second assertion~\eqref{est:algzwei} from the same lemma, which we apply now with~$(\beta,\frac{n+2}{n},\kappa)$, to obtain
\begin{align*}
    \prod_{j=1}^{k} \beta ^{ \frac{ j(\frac{n+2}{n})^{k-j+1}}{\kappa((\frac{n+2}{n})^k-1)}} \leq \beta ^{\frac{(n+2)^2}{4\kappa}}.
\end{align*}
Recalling our abbreviations for~$\kappa$ and~$\widetilde{\kappa}$, we achieve the following inequality
\begin{align*}
    \prod_{j=1}^{k} \bigg(  4^{j}  2\delta \rho^2 \frac{(1+\alpha_{j})^{3\frac{n+2+\sigma}{\sigma}}}{(s-r)^2} \bigg)^{ \frac{ (\frac{n+2}{n})^{k-j+1}}{q+2\alpha_k}} \leq (\beta\widetilde{\kappa}^{\frac{n+2}{n}})^{\frac{n+2}{4+n(p-q)}}.
\end{align*}
Since both~$\beta$ and~$\widetilde{\kappa}$ are independent of~$k\in\N$, we may pass to the limit~$k\to\infty$ in~\eqref{est:iterationvier}, which yields
\begin{align*}
    \limsup\limits_{k\to\infty} A^{\frac{1}{q+2\alpha_k}}_k &\leq (\beta\widetilde{\kappa}^{\frac{n+2}{n}})^{\frac{n+2}{4+n(p-q)}} \bigg( \fiint_{\Omega_T\cap \Q_s} ( \G(Du)^{q} + 1 )\,\dx\dt \bigg) ^{\frac{2}{4+n(p-q)}} \\
    &\leq C \Bigg( \frac{\rho^{n+2} \Big(1+[[f]]^{\frac{(n+2)(n+2+\sigma)}{\sigma}}_{n+2+\sigma;\Omega_T\cap Q_{2\rho}(z_0)}  \Big) }{(s-r)^{n+2}} \fiint_{\Omega_T\cap \Q_s} (1+|Du|^2)^{\frac{q}{2}} \,\dx\dt \Bigg) ^{\frac{2}{4+n(p-q)}}
\end{align*}
with~$C=C(C_1,C_2,N,n,p,q,\sigma,\Theta_{\rho}(x_0))$. As a last step, we note that~$\alpha_k\to\infty$ as~$k\to\infty$ and also~$\rho_k \to r$ as~$k\to\infty$, which yields the quantitative local~$L^\infty$-gradient estimate 
\begin{align} \label{est:iterationfünf}
    &\sup\limits_{\Omega_T \cap Q_r(z_0)} |Du| \\
    &\qquad\,\,= \lim\limits_{k\to\infty} \bigg( \fiint_{\Omega_T\cap Q_r(z_0)} |Du|^{q+2\alpha_k} \,\dx\dt \bigg) ^{\frac{1}{q+2\alpha_k}} \nonumber \\
    &\qquad\,\,\leq \limsup\limits_{k\to\infty} A^{\frac{1}{q+2\alpha_k}}_k \nonumber \\
    &\qquad\,\,\leq C \bigg( \frac{\rho^{n+2} \Big(1+[[f]]^{\frac{(n+2)(n+2+\sigma)}{\sigma}}_{n+2+\sigma;\Omega_T\cap Q_{2\rho}(z_0)} \Big) }{(s-r)^{n+2}} \fiint_{\Omega_T\cap Q_s(z_0)} (1+|Du|^2)^{\frac{q}{2}} \,\dx\dt \bigg) ^{\frac{1}{q} \cdot \frac{2q}{4+n(p-q)}} \nonumber
\end{align}
with a constant~$C=C(C_1,C_2,N,n,p,q,\sigma,\Theta_{\rho}(x_0))$, which holds true for any~$\rho\leq r<s\leq 2\rho$.

%%%%%%%%%%%%%%%%%%%%%%%%%%%%%%%%%%%%%%%%%%%%%%%%

\subsection{Interpolation} \label{subsec:interpolation}
We aim to reduce the integrability exponent~$q\geq p$ present in the right-hand side of the quantitative~$L^\infty$-gradient estimate~\eqref{est:iterationfünf} to the smaller exponent~$p\geq 2$. In order to achieve this, we rewrite the exponent, bound one of the appearing factors from above by taking the essential supremum, and then apply Young's inequality in order to reabsorb the sup-term by aid of the classical iteration Lemma~\ref{lem:iteration}. For this matter, we abbreviate
$$ \alpha = \frac{2q}{4+n(p-q)} $$
and note that due to the assumption~\eqref{parameter} there holds~$\frac{q}{\alpha(q-p)}>1$. Thus, we are allowed to apply Young's inequality with exponents~$(\frac{q}{\alpha(q-p)},\frac{q}{q-\alpha(q-p)})$ and obtain 
\begin{align} \label{quantestpnorm}
     &\sup\limits_{\Omega_T \cap Q_r(z_0)}  (1+|Du|^2)^{\frac{1}{2}} \\
    &\qquad\,\,\leq C \bigg( \frac{\rho^{n+2} \Big(1+[[f]]^{\frac{(n+2)(n+2+\sigma)}{\sigma}}_{n+2+\sigma;\Omega_T\cap Q_{2\rho}(z_0)} \Big) }{(s-r)^{n+2}} \fiint_{\Omega_T\cap Q_s(z_0)} (1+|Du|^2)^{\frac{p}{2} + \frac{q-p}{2}} \,\dx\dt \bigg) ^{\frac{\alpha}{q}} \nonumber \\
    &\qquad\,\,\leq \sup\limits_{\Omega_T \cap Q_s(z_0)}  \Big((1+|Du|^2)^{\frac{1}{2}}\Big)^{\frac{\alpha(q-p)}{q}} \nonumber \\
    &\qquad\,\,\,\quad \cdot C \bigg( \frac{\rho^{n+2} \Big(1+[[f]]^{\frac{(n+2)(n+2+\sigma)}{\sigma}}_{n+2+\sigma;\Omega_T\cap Q_{2\rho}(z_0)} \Big) }{(s-r)^{n+2}} \fiint_{\Omega_T\cap Q_s(z_0)} (1+|Du|^2)^{\frac{p}{2}} \,\dx\dt \bigg) ^{\frac{\alpha}{q}} \nonumber \\
    & \qquad\,\,\leq \frac{1}{2} \sup\limits_{\Omega_T \cap Q_s(z_0)}  (1+|Du|^2)^{\frac{1}{2}} \nonumber \\
    &\qquad\,\,\,\quad + C \bigg( \frac{\rho^{n+2} \Big( 1+[[f]]^{\frac{(n+2)(n+2+\sigma)}{\sigma}}_{n+2+\sigma;\Omega_T\cap Q_{2\rho}(z_0)} \Big) }{(s-r)^{n+2}} \fiint_{\Omega_T\cap Q_s(z_0)} (1+|Du|^2)^{\frac{p}{2}} \,\dx\dt \bigg) ^{\frac{\alpha}{q-\alpha(q-p)}}. \nonumber
\end{align}
 The claimed estimate~\eqref{est:apriorigradientsuper} now follows by an application of the standard iteration Lemma~\ref{lem:iteration} and the definition of~$[[f]]_{n+2+\sigma;\Omega_T\cap Q_{2\rho}(z_0)}$ according to~\eqref{est:fabkürz}. 

%%%%%%%%%%%%%%%%%%%%%%%%%%%%%%%%%%%%%%%%%%%%%%%%%%%%%%%%%%%%%%

\section{Approximation} \label{sec:approx}

Since the \textit{a priori} estimate~\eqref{est:apriorigradientsuper} from Section~\ref{sec:aprioriest} is stated for classical solutions~$u\in C^3(\overline{\Omega}_T\cap Q_{2\rho}(z_0),\R^N)$, we now perform a regularizing procedure of the original parabolic system~\eqref{pde}. This approach yields an approximating family of solutions that are regular up to the boundary, allowing us to exploit the \textit{a priori} gradient estimate from Proposition~\ref{prop:apriorigradientsuper}. Throughout this section, let
$$ u\in C^0\big(\Lambda_{2\rho}(t_0);L^2(\Omega \cap B_{2\rho}(x_0),\R^N) \big) \cap L^q \big(\Lambda_{2\rho}(t_0);W^{1,q}(\Omega \cap B_{2\rho}(x_0),\R^N)\big) $$
  denote a local weak solution to the parabolic system~\eqref{pde} under the set of assumptions~\eqref{parameter} -- \eqref{datumregularität}, which satisfies~$u\equiv 0$ on~$(\partial\Omega)_T \cap Q_{2\rho}(z_0)$ in the sense of traces for some~$z_0=(x_0,t_0)$ with spatial boundary point~$x_0\in\partial\Omega$ and~$\Lambda_{2\rho}(t_0)\subset(0,T).$ 

%%%%%%%%%%%%%%%%%%%%%%%%%%%%%%%%%%%%%%%%%%%%%%%%%%%%%%%%%%%%%%

\subsection{Approximation of the coefficients} \label{subsec:approxvectorfield}

We follow the approach from~\cite[Chapter 4.2]{boundaryparabolic}. Let us consider a standard positive and smooth mollifier $\phi\in C^\infty_0((-1,1))$ that satisfies $\int_{\R} \phi\,\dx =1$. For $\epsilon\in(0,1]$, we further denote its scaled variant by $\phi_\epsilon(x) \coloneqq \frac{\phi(\frac{x}{\epsilon})}{\epsilon}$ that is compactly supported within~$(-\epsilon,\epsilon)$. Moreover, we define
$$ V(s) \coloneqq a(e^s) $$
and let~$V_\epsilon(s) \coloneqq (V* \phi_\epsilon)(s) $ for any~$s\in\R$. The regularized coefficients are then given by
\begin{align} \label{approx}
    a_\epsilon(s) \coloneqq V_\epsilon(\log(s))
\end{align}
for~$s>0$. As outlined in~\cite[Appendix~A]{boundaryparabolic} and~\cite[Section~4.2]{boundaryellipticpq}, these regularized coefficients satisfy the following set of~$(p,q)$-growth conditions
\begin{align} \label{approxgrowth}
  \begin{cases}
       \widetilde{C}_1 (\mu^2+s^2)^{\frac{p-2}{2}} \leq a_\epsilon(s) \leq \widetilde{C}_2 (\mu^2+s^2)^{\frac{q-2}{2}}, \\
       \widetilde{C}_1 (\mu^2+s^2)^{\frac{p-2}{2}} \leq  a_\epsilon(s) + a'_\epsilon(s)s \leq \widetilde{C}_2 (\mu^2+s^2)^{\frac{q-2}{2}}, \\
       |a''_\epsilon(s)s^2| \leq \frac{\widetilde{C}_2}{\epsilon} (\mu^2+s^2)^{\frac{q-2}{2}} 
  \end{cases} 
\end{align}
with positive constants~$\widetilde{C}_1=\widetilde{C}_1(C_1,n,p)$ and~$\widetilde{C}_2=\widetilde{C}_2(C_2,n,q)$. Additionally, for any~$\epsilon\in(0,1]$ there holds the quantitative estimate
\begin{align} \label{smallnessest}
    |a_\epsilon(s) - a(s)| \leq  \epsilon \widetilde{C}(C_2,q) e^{q-2}(\mu^2+s^2)^{\frac{q-2}{2}}
\end{align}
for any~$s>0$. Now, we consider
\begin{equation} \label{coeffqgrowth}
    b_\epsilon(s) \coloneqq \epsilon(\mu^2+s^2)^{\frac{q-2}{2}} + a_\epsilon(s)
\end{equation}
for~$s>0$. According to the structure conditions~\eqref{approxgrowth}, we infer that the coefficients~$b_\epsilon$ satisfy the following~$q$-growth conditions
\begin{align} \label{approxqgrowth}
  \begin{cases}
       \epsilon(\mu^2+s^2)^{\frac{q-2}{2}}  \leq b_\epsilon(s) \leq \widehat{C}_2 (\mu^2+s^2)^{\frac{q-2}{2}}, \\
       \epsilon\widehat{C}_1 (\mu^2+s^2)^{\frac{q-2}{2}} \leq  b_\epsilon(s) + b'_\epsilon(s)s \leq \widehat{C}_2 (\mu^2+s^2)^{\frac{q-2}{2}}, \\
       |b''_\epsilon(s)s^2| \leq \widehat{C}_2(\frac{1}{\epsilon}+1) (\mu^2+s^2)^{\frac{q-2}{2}}
  \end{cases} 
\end{align}
with positive constants~$\widehat{C}_1=\widehat{C}_1(C_1,n,q)$ and~$\widehat{C}_2=\widehat{C}_2(C_2,n,q)$. We remark that by discarding the contribution of the coefficient's first term in~\eqref{coeffqgrowth}, it is easy to check that~$b_\epsilon$ also satisfies nonstandard~$(p,q)$-growth conditions of the form~\eqref{approxgrowth} with structure constants independent of~$\epsilon\in(0,1]$. 

%%%%%%%%%%%%%%%%%%%%%%%%%%%%%%%%%%%%%%%%%%%%%%%%%%%%%%%%%%%%%%

\subsection{Approximation of the system} \label{subsec:approxequation}

The next step of our regularizing procedure lies in utilizing the previous steps outlined in Section~\ref{subsec:approxvectorfield}. As in the statement of Theorem~\ref{hauptresultat}, we assume that~$u$ vanishes on the lateral boundary of~$(\partial\Omega)_T \cap Q_{2\rho}(z_0)$, i.e. that there holds
$$u \equiv 0 \qquad \mbox{on~$(\partial\Omega)_T \cap Q_{2\rho}(z_0)$}$$ 
in the sense of traces, where~$\Lambda_{2\rho}(t_0)\subset(0,T)$. The datum~$f\in L^{n+2+\sigma}(\Omega_T,\R^n)$ is first localized by considering its truncation~$f_E \coloneqq f \bigchi_E$, where we set
$$ E \coloneqq \{x\in\Omega:\dist(x,\partial\Omega)>\epsilon\}\times(0,T) \subset \Omega_T $$
for~$\epsilon\in(0,1]$ small enough. Then, we regularize~$f_E$ via~$f_\epsilon \coloneqq f_E*\eta_{\frac{\epsilon}{2}}$, where~$\eta\in C^\infty_0(B_1\times(-1,1))$ denotes a standard mollifier and~$\eta_\epsilon(z)\coloneqq\epsilon^{-1} \eta(\frac{z}{\epsilon})$. Accordingly, we have that~$f_\epsilon\in C^1(\overline{\Omega}_T\cap Q_{2\rho}(z_0),\R^N)$ and~$\supp f_\epsilon \Subset \Omega\times\R$, ensuring that the assumptions of the \textit{a priori} gradient estimate from Proposition~\ref{prop:apriorigradientsuper} are satisfied by the inhomogeneity~$f_\epsilon$. We now consider the unique weak solution
$$ u_\epsilon \in L^q\big( \Lambda_{2\rho}(t_0); W^{1,q}(\Omega\cap B_{2\rho}(x_0), \R^N) \big) $$
to the Cauchy-Dirichlet problem
\begin{align} \label{cauchydirichlet}
    \begin{cases}
        \partial_t u^i_\epsilon - \divv \big( b_{\epsilon}(|Du_\epsilon|) Du^i_\epsilon \big)= f^i_\epsilon  & \mbox{in~$\Omega_T \cap Q_{2\rho}(z_0)$}, \\
         u^i_\epsilon = u & \mbox{on~$\partial_p(\Omega_T \cap Q_{2\rho}(z_0))$} 
    \end{cases}
\end{align}
for any $i=1,\ldots,N\in\N$ and~$\epsilon\in(0,1]$, where the coefficients~$(b_{\epsilon})_{\epsilon\in (0,1]}$ are defined in~\eqref{coeffqgrowth}.
In particular, there holds~$u_\epsilon \equiv 0 $ on the lateral boundary~$(\partial\Omega)_T\cap Q_{2\rho}(z_0)$ due to the assumptions made on the local weak solution~$u$. Here, the initial datum~$u(\cdot,t_0-4\rho^2) \in L^2(\Omega_T \cap Q_{2\rho}(z_0),\R^N)$ is taken in the usual~$L^2$-sense, i.e.
\begin{equation} \label{randwertel2}
    \lim\limits_{h\downarrow 0} \fint_{t_0-4\rho^2}^{t_0-4\rho^2+h} \int_{\Omega_T \cap Q_{2\rho}(t_0)} |u_\epsilon(x,t)- u(x,t_0-4\rho^2)|^2 \,\dx\dt =0. 
\end{equation}
 Indeed, this approach is feasible according to our notion of local weak solutions in~\ref{defweakform}. The existence of a unique weak solution~$u_\epsilon$ now follows from~\cite[\S 2.1, Th{\'e}or{\'e}me 1.2 and Remarque 1.10]{lions1969quelques}, which asserts that
$$ u_\epsilon\in C^0\big(\Lambda_{2\rho}(t_0);L^2(\Omega\cap B_{2\rho}(x_0),\R^N) \big) \cap L^q\big(\Lambda_{2\rho}(t_0);u+W^{1,q}_0(\Omega\cap B_{2\rho}(x_0),\R^N) \big) $$
with a time derivative~$\partial_t u_\epsilon$ in space
$$ \partial_t u_\epsilon \in L^{\frac{q}{q-1}}\big( \Lambda_{2\rho}(t_0);W^{-1,\frac{q}{q-1}}(\Omega\cap B_{2\rho}(x_0),\R^N) \big). $$
In particular, for any~$\phi \in L^q\big(\Lambda_{2\rho}(t_0);u+W^{1,q}_0(\Omega\cap B_{2\rho}(x_0),\R^N) \big)$ there holds the weak form
\begin{align} \label{weakformduality}
    \iint_{\Omega_T\cap Q_{2\rho}(z_0)} \langle\partial_t u_\epsilon,u_\epsilon - \phi \rangle_{W^{1,q}_0(\Omega\cap B_{2\rho}(x_0),\R^N)} \, \dx\dt & + \iint_{\Omega_T\cap Q_{2\rho}(z_0)} b_\epsilon(|Du_\epsilon|)Du_\epsilon \cdot(Du_\epsilon-D\phi) \,\dx \dt \\
    & = \iint_{\Omega_T\cap Q_{2\rho}(z_0)} f_\epsilon \cdot(u_\epsilon-\phi) \, \dx\dt, \nonumber
\end{align}
where~$\langle\cdot,\cdot\rangle$ denotes the duality pairing between the spaces~$W^{1,q}_0$ and~$W^{-1,\frac{q}{q-1}}$. \,\\ 

At this point, we note that the coefficients~$(b_{\epsilon})_{\epsilon\in(0,1]}$ are smooth for any~$\epsilon\in (0,1]$ by construction and, according to~\eqref{approxqgrowth}, satisfy a standard~$q$-growth condition. In fact, the quantitative constants in~\eqref{approxqgrowth} depend on the parameter~$\epsilon\in(0,1]$ in general. However, the following regularity result for~$u_\epsilon$ is only used in a qualitative way and the presence of the approximating parameter~$\epsilon\in(0,1]$ within the structure constants does not pose any problem. Moreover,~$\Omega\subset\R^n$ is assumed to be a bounded and convex~$C^2$-set. Consequently, we are in the setting of~\cite[Appendix B: Regularity up to the boundary]{boundaryparabolic}, where we recall the standard~$q$-growth conditions~\eqref{approxqgrowth} for~$b_\epsilon$. By following their approach, which builds upon global Schauder estimates and a reflection technique, in combination with local~$C^{1,\alpha}$-regularity results, we obtain that the approximating solutions~$u_\epsilon$ are regular up to the boundary, i.e. there holds
\begin{align} \label{approxsolsmooth}
    u_\epsilon \in C^3\big( \overline{\Omega}_T \cap Q_{2\rho}(z_0),\R^N\big)
\end{align}
for any~$\epsilon\in (0,1]$. In other words,~$u_\epsilon$ is a regular, classical solution to~\eqref{cauchydirichlet} for any~$\epsilon\in(0,1]$. We also mention similar approximation procedures outlined by Bögelein, Duzaar, Marcellini, and Scheven in~\cite[Section~5]{boundaryellipticpq} and also by De Filippis and Piccinini in~\cite[Section~5.3]{cristiana}. Our next aim is the elaboration of appropriate energy estimates involving the approximating mappings~$(u_\epsilon)_{\epsilon\in(0,1]}$. 

%%%%%%%%%%%%%%%%%%%%%%%%%%%%%%%% 

\subsection{Comparison estimate} \label{subsec:energyestapproxsol}

In the following, we prove a comparison estimate involving the approximating weak solutions~$u_\epsilon$ and the original local weak solution~$u$. 

%%%%%%%%%%%%%%%%%%%%%%%%%%%%%%%% 

\begin{mylem} \label{lem:energyestimate}

For any~$\epsilon\in(0,1]$ there holds the energy estimate
\begin{align} \label{est:energyestimate}
  \sup\limits_{t\in\Lambda_{2\rho}(t_0)} &\int_{(\Omega\cap B_{2\rho}(x_0))\times\{t\}} |u_\epsilon - u|^2\,\dx + \iint_{\Omega_T \cap Q_{2\rho}(z_0) } |Du_\epsilon - Du|^{p}\,\dx\dt \\
     & \leq C \epsilon \iint_{\Omega_T \cap Q_{2\rho}(z_0)} (\mu^2+|Du|^2)^{\frac{q-2}{2}}|Du|^2\,\dx\dt \nonumber \\
     & \quad + C|\Omega_T\cap Q_{2\rho}(z_0)|^{\frac{p}{p-1}} \iint_{\Omega_T \cap Q_{2\rho}(z_0)} |f_\epsilon-f|^{\frac{p}{p-1}}\,\dx\dt \nonumber
\end{align}
with~$C=C(C_1,n,p)$. 
\end{mylem}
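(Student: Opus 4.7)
The natural strategy is to subtract the two weak formulations and test the resulting difference equation against the admissible function $u_\epsilon - u$, which by construction lies in $L^q(\Lambda_{2\rho}(t_0); W^{1,q}_0(\Omega \cap B_{2\rho}(x_0), \R^N))$ and vanishes at the initial slice $\{t = t_0 - 4\rho^2\}$ in the $L^2$-sense (cf.~\eqref{randwertel2}). Since the original solution $u$ is only assumed to satisfy Definition~\ref{defweakform} (no distributional time derivative is available), I would first regularize both equations by Steklov averages in time, test with the Steklov average of $u_\epsilon - u$, and then pass to the limit in the averaging parameter. This yields, for every $\tau \in \Lambda_{2\rho}(t_0)$, the identity
\begin{align*}
    \tfrac{1}{2}\!\!\int_{(\Omega\cap B_{2\rho}(x_0))\times\{\tau\}}\!\! |u_\epsilon - u|^2\,\dx &+ \iint_{(\Omega\cap B_{2\rho}(x_0))\times(t_0-4\rho^2,\tau)} \bigl[b_\epsilon(|Du_\epsilon|)Du_\epsilon - a(|Du|)Du\bigr]\cdot(Du_\epsilon - Du)\,\dx\dt \\
    &= \iint_{(\Omega\cap B_{2\rho}(x_0))\times(t_0-4\rho^2,\tau)} (f_\epsilon - f)\cdot(u_\epsilon - u)\,\dx\dt.
\end{align*}

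For the diffusion integrand I would split
\[
[b_\epsilon(|Du_\epsilon|)Du_\epsilon - a(|Du|)Du]\cdot(Du_\epsilon - Du) = \foo{A} + \foo{B},
\]
where $\foo{A} \coloneqq [b_\epsilon(|Du_\epsilon|)Du_\epsilon - b_\epsilon(|Du|)Du]\cdot(Du_\epsilon - Du)$ and $\foo{B} \coloneqq [b_\epsilon(|Du|) - a(|Du|)]Du\cdot(Du_\epsilon - Du)$. Observing (as noted at the end of Section~\ref{subsec:approxvectorfield}) that $b_\epsilon$ satisfies the $(p,q)$-growth conditions~\eqref{approxgrowth} with constants independent of $\epsilon$, the standard monotonicity estimate associated to such Uhlenbeck vector fields with $p \geq 2$ gives $\foo{A} \geq c(p,C_1)\,|Du_\epsilon - Du|^p$. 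For $\foo{B}$, I would use the definition $b_\epsilon(s) - a(s) = \epsilon(\mu^2+s^2)^{(q-2)/2} + (a_\epsilon(s) - a(s))$ together with the pointwise smallness estimate~\eqref{smallnessest}, yielding $|\foo{B}| \leq C\epsilon(\mu^2+|Du|^2)^{(q-2)/2}|Du||Du_\epsilon - Du|$, which after Young's inequality with exponents $(p, p/(p-1))$ produces a reabsorbable term $\tfrac{c}{4}|Du_\epsilon - Du|^p$ plus a term of the form $C\epsilon^{p/(p-1)}(\mu^2+|Du|^2)^{(q-2)p/(2(p-1))}|Du|^{p/(p-1)}$. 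A simple Young/interpolation argument (using $p \geq 2$) bounds the latter by a constant multiple of $\epsilon(\mu^2+|Du|^2)^{(q-2)/2}|Du|^2$, producing the first term on the right-hand side of~\eqref{est:energyestimate}.

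The right-hand side $\iint (f_\epsilon - f)\cdot(u_\epsilon - u)$ is treated by Hölder's inequality with exponents $(p, p/(p-1))$, followed by the spatial Poincaré inequality (applicable because $u_\epsilon - u$ vanishes on the lateral boundary $(\partial\Omega)_T \cap Q_{2\rho}(z_0)$), and finally Young's inequality with the same exponents. This yields a term of the form $\tfrac{c}{4}\iint |Du_\epsilon - Du|^p$ that is reabsorbed into the left-hand side, plus a remainder controlled by $C|\Omega_T\cap Q_{2\rho}(z_0)|^{p/(p-1)}\iint|f_\epsilon - f|^{p/(p-1)}$, where the volume factor is produced by combining Hölder with the Poincaré constant (which for $\Omega \cap B_{2\rho}(x_0)$ scales with $\rho$, and~$\rho^{n+2}$ comparable to the measure up to the geometric quantity $\Theta_\rho(x_0)$). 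Taking the supremum over $\tau \in \Lambda_{2\rho}(t_0)$ on the left completes the argument.

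The main technical obstacle is the rigorous justification of using $u_\epsilon - u$ as a test function in both equations simultaneously. Only $u_\epsilon$ enjoys a dual-space time derivative in view of~\eqref{weakformduality}, while $u$ carries merely $C^0(L^2)$ time regularity. Overcoming this requires a careful Steklov-averaging (or time-mollification) procedure, combined with the strict use of the initial condition~\eqref{randwertel2} to kill the boundary contribution at $t = t_0 - 4\rho^2$ when passing to the limit. Once this is in place, all remaining steps reduce to standard monotonicity, Poincaré, and Young-type manipulations.
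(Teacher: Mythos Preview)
Your overall strategy---subtracting the two weak formulations, testing with $u_\epsilon-u$, extracting monotonicity for the coercive part, and handling the right-hand side via H\"older--Poincar\'e--Young---is exactly the paper's approach. The time-regularization via Steklov averages versus the paper's cut-off $\eta_\gamma$ is a cosmetic difference (and in fact the paper observes that $u$ itself has a distributional time derivative in $L^{q'}(W^{-1,q'})$, so your worry on that front is unwarranted).

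However, your treatment of the error term $\foo{B}$ contains a genuine gap. After Young with exponents $(p,\tfrac{p}{p-1})$ you obtain a remainder
\[
C\,\epsilon^{\frac{p}{p-1}}(\mu^2+|Du|^2)^{\frac{(q-2)p}{2(p-1)}}|Du|^{\frac{p}{p-1}},
\]
whose homogeneity in $|Du|$ is $(q-1)\tfrac{p}{p-1}$. When $q>p$ this exceeds $q$, so the remainder is \emph{not} dominated by $(\mu^2+|Du|^2)^{(q-2)/2}|Du|^2$ for large $|Du|$, and no ``simple Young/interpolation'' can repair this (try $p=2$, $q=3$: the remainder behaves like $|Du|^4$, the target like $|Du|^3$). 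The estimate as you wrote it therefore fails in the genuinely non-standard regime $q>p$.

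The paper circumvents this by \emph{not} discarding the positive sign contribution coming from the $\epsilon$-regularization. It splits $b_\epsilon(|Du_\epsilon|)Du_\epsilon-a(|Du|)Du$ into three pieces, keeping the good term $\epsilon(\mu^2+|Du_\epsilon|^2)^{(q-2)/2}|Du_\epsilon|^2$ on the left, and then uses a case distinction $|Du|\lessgtr\tfrac12|Du_\epsilon|$ together with Young with exponents $(2,2)$ so that every bad $|Du_\epsilon|$-contribution is absorbed into this good term; only $C\epsilon(\mu^2+|Du|^2)^{(q-2)/2}|Du|^2$ survives. In your decomposition the same idea is available: retain in $\foo{A}$ the monotonicity of the $q$-Laplacian part, namely $\foo{A}\ge c|Du_\epsilon-Du|^p+c\,\epsilon(\mu^2+|Du|^2+|Du_\epsilon|^2)^{(q-2)/2}|Du_\epsilon-Du|^2$, and then apply Young with $(2,2)$ to $\foo{B}$ (rather than $(p,\tfrac{p}{p-1})$) so the resulting term $\tfrac{c}{2}\epsilon(\mu^2+|Du|^2)^{(q-2)/2}|Du_\epsilon-Du|^2$ is absorbed there.
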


%%%%%%%%%%%%%%%%%%%%%%%%%%%%%%%% 

\begin{proof}
We begin by subtracting the weak form for~$u_\epsilon$ and~$u$ on~$\Omega_T \cap Q_{2\rho}(z_0)$ and test the resulting equations with the test function~$\phi= \eta^2_\gamma (u_\epsilon-u) $. According to our construction of~$u_\epsilon$ in~\eqref{cauchydirichlet}, this choice of test function is indeed admissible up to an approximation argument. Here,~$\eta_\gamma\in W^{1,\infty}(\R)$ denotes a cut-off function in time, given by
\begin{align*}
    \eta_\gamma(t) = \begin{cases}
        1 & \mbox{for~$t\in(-\infty,\tau)$}, \\
        1 - \frac{t-\tau}{\gamma} & \mbox{for~$t\in[\tau,\tau+\gamma)$}, \\
        0 & \mbox{for~$t\in[\tau+\gamma,t_0)$},
    \end{cases}
\end{align*} 
for an arbitrary but fixed time~$\tau\in \Lambda_{2\rho}(t_0)\setminus\{t_0\}$ and a small parameter~$\gamma \in (0,\min\{ \tau-(t_0-4\rho^2), t_0-\tau\})$. At this point, we recall the identity~\eqref{weakformduality} and note that the local weak solution~$u$ exhibits a time derivative in the very same dual space, which can be inferred in a straightforward manner by exploiting the growth condition~\eqref{A-Voraussetzung2} and Hölder's inequality, cf.~\cite[Section 6.6]{bogelein2013parabolic}. Plugging the above test function into the resulting weak formulation involving~$u_\epsilon$ and~$u$ respectively, and integrating by parts, there holds 
\begin{align} \label{est:energyestweakform}
    \iint_{\Omega_T\cap Q_{2\rho}(z_0)} & \langle \partial_t  (u_\epsilon-u), \eta^2_\gamma ( u_\epsilon-u) \rangle_{W^{1,q}_0(\Omega\cap B_{2\rho}(x_0),\R^N)} \,\dx\dt \,\\
    &\quad + \iint_{\Omega_T\cap Q_{2\rho}(z_0)}  \eta^2_\gamma (b_{\epsilon}(|Du_\epsilon|) Du_\epsilon - a(|Du|)Du) \cdot (Du_\epsilon-Du) \,\dx\dt \nonumber \\
    &=  \iint_{\Omega_T\cap Q_{2\rho}(z_0)}  \eta^2_\gamma (f_\epsilon-f) \cdot (u_\epsilon-u)\,\dx\dt. \nonumber
\end{align}
The term involving the time derivative can be rewritten by the product rule as
\begin{align*}
     \iint_{\Omega_\epsilon\times\Lambda_{2\rho}(t_0)} &  \langle \partial_t  (u_\epsilon-u),\eta^2_\gamma( u_\epsilon-u ) \rangle_{W^{1,q}_0(\Omega\cap B_{2\rho}(x_0),\R^N)} \,\dx\dt \\
     &= \iint_{\Omega_T\cap Q_{2\rho}(z_0)} \langle (\partial_t (\eta_\gamma(u_\epsilon-u)), (\eta_\gamma (u_\epsilon-u))) \rangle_{W^{1,q}_0(\Omega\cap B_{2\rho}(x_0),\R^N)} \,\dx\dt \\
     &\quad - \iint_{\Omega_T\cap Q_{2\rho}(z_0)} \eta_\gamma' \eta_\gamma |u_\epsilon-u|^2 \,\dx\dt \\
    &= \frac{1}{2} \iint_{\Omega_T\cap Q_{2\rho}(z_0)} \partial_t |\eta_\gamma (u_\epsilon-u)|^2\,\dx\dt 
 - \iint_{\Omega_T\cap Q_{2\rho}(z_0)} \eta_\gamma' \eta_\gamma |u_\epsilon-u|^2 \,\dx\dt  \\
    &\eqqcolon \foo{I}_\gamma + \foo{II}_\gamma.
\end{align*}
The first quantity~$\foo{I}_\gamma$ vanishes, since we have~$u_\epsilon=u$ at the initial time~$t=t_0-4\rho^2$ and also~$\eta_\gamma =0$ at~$t=t_0$. On the contrary, by passing to the limit~$\gamma\downarrow 0$ in the second term, we obtain
\begin{align*} 
    \lim\limits_{\gamma\downarrow 0} \foo{II}_\gamma &= \int_{(\Omega\cap B_{2\rho}(x_0)) \times\{\tau\}}  |u_\epsilon-u|^2 \,\dx.  
\end{align*}
  In the remaining two integral quantities in~\eqref{est:energyestweakform}, we straightaway pass to the limit~$\gamma\downarrow 0$, which leads us to the identity
  \begin{align} \label{est:energylimit}
    & \int_{ (\Omega \cap B_{2\rho}(x_0)) \times\{\tau\}}  |u_\epsilon - u|^2 \,\dx \\
    & \qquad \quad + \iint_{\Omega_\tau \cap Q_{2\rho}(z_0)} (b_{\epsilon}(|Du_\epsilon|) Du_\epsilon - a(|Du|)Du) \cdot (Du_\epsilon-Du) \,\dx\dt \nonumber \\
    & \quad = \iint_{\Omega_\tau \cap Q_{2\rho}(z_0)} (f_\epsilon-f) \cdot (u_\epsilon-u)\,\dx\dt \nonumber
\end{align} 
  for any~$\tau\in \Lambda_{2\rho}(t_0)\setminus\{t_0\}$. Next, we concentrate on the treatment of the second term. For this, we use the definition of the coefficients~$b_\epsilon$ and rewrite the quantity as
\begin{align*}
    (b_{\epsilon}&(|Du_\epsilon|) Du_\epsilon - a(|Du|)Du) \cdot (Du_\epsilon-Du) \\
    & = (a_\epsilon(|Du_\epsilon|)Du_\epsilon - a_\epsilon(|Du|)Du) \cdot (Du_\epsilon-Du) \\
    &\quad + (a_\epsilon(|Du|Du-a(|Du|)Du) \cdot (Du_\epsilon - Du) \\
    &\quad + \epsilon (\mu^2+|Du_\epsilon|^2)^{\frac{q-2}{2}}|Du_\epsilon|^2 - \epsilon (\mu^2+|Du_\epsilon|^2)^{\frac{q-2}{2}} Du_\epsilon \cdot Du.
\end{align*}
In order to treat the first term, we employ the ellipticity estimate~\eqref{est:ellipticity} from Lemma~\ref{lem:ellipticity} to achieve
\begin{align*}
    (a_\epsilon(|Du_\epsilon|)Du_\epsilon & - a_\epsilon(|Du|)Du) \cdot (Du_\epsilon-Du)\\
    &\geq C \int_{0}^{1} (\mu^2+|Du+s(Du_\epsilon-Du)|^2)^{\frac{p-2}{2}}\, \ds |Du_\epsilon-Du|^2 \\
    & \geq C (\mu^2+|Du|^2+|Du_\epsilon|^2)^{\frac{p-2}{2}}|Du_\epsilon-Du|^p \\
    &\geq C |Du_\epsilon-Du|^p
\end{align*}
with a constant~$C=C(C_1,p)$. The second term is bounded from below by distinguishing between the cases where~$|Du|\leq \frac{1}{2}|Du_\epsilon|$ and~$|Du|>\frac{1}{2}|Du_\epsilon|$. Indeed, in the former case, we simply obtain
\begin{align*}
    - \epsilon (\mu^2 & +|Du_\epsilon|^2)^{\frac{q-2}{2}} Du_\epsilon \cdot Du \\
    &\geq - \epsilon (\mu^2+|Du_\epsilon|^2)^{\frac{q-2}{2}} |Du_\epsilon| |Du| \\
    &\geq - \mbox{$\frac{1}{2}$} \epsilon (\mu^2+|Du_\epsilon|^2)^{\frac{q-2}{2}} |Du_\epsilon|^2,
\end{align*}
whereas in the other case we apply Young's inequality to achieve
\begin{align*}
    - \epsilon (\mu^2 & +|Du_\epsilon|^2)^{\frac{q-2}{2}} Du_\epsilon \cdot Du \\
    &\geq - \epsilon (\mu^2+|Du_\epsilon|^2)^{\frac{q-2}{2}} |Du_\epsilon| |Du| \\
    &\geq - \mbox{$\frac{1}{2}$} \epsilon (\mu^2+|Du_\epsilon|^2)^{\frac{q-2}{2}} |Du_\epsilon|^2 - \mbox{$\frac{1}{2^{1-q}}$} \epsilon (\mu^2+|Du|^2)^{\frac{q-2}{2}} |Du|^2. 
\end{align*}
Consequently, the second quantity is bounded from below by
\begin{align*}
    (a_\epsilon(|Du|Du&-a(|Du|)Du) \cdot (Du_\epsilon - Du) \\
    &\geq - \mbox{$\frac{1}{2}$} \epsilon (\mu^2+|Du_\epsilon|^2)^{\frac{q-2}{2}} |Du_\epsilon|^2 - \mbox{$\frac{1}{2^{1-q}}$} \epsilon (\mu^2+|Du|^2)^{\frac{q-2}{2}} |Du|^2. 
\end{align*}
Finally, for the third term we use estimate~\eqref{smallnessest} and argue similarly as for the second term by distinguishing between the cases where either~$|Du|\leq \mbox{$\frac{1}{2}$}|Du_\epsilon|$ or~$|Du|> \mbox{$\frac{1}{2}$}|Du_\epsilon|$ holds true. Young's inequality then yields the lower bound
\begin{align*} 
    \epsilon (\mu^2+|Du_\epsilon|^2)^{\frac{q-2}{2}}|Du_\epsilon|^2 & - \epsilon (\mu^2+|Du_\epsilon|^2)^{\frac{q-2}{2}} Du_\epsilon \cdot Du \\
    & \geq - \mbox{$\frac{1}{2}$} \epsilon (\mu^2+|Du_\epsilon|^2)^{\frac{q-2}{2}} |Du_\epsilon|^2 - C \epsilon (\mu^2+|Du|^2)^{\frac{q-2}{2}} |Du|^2
\end{align*} 
with a constant~$C=C(C_2,q)$. Combining the preceding estimates for all three quantities, we obtain the lower bound
\begin{align*}
    (b_{\epsilon}&(|Du_\epsilon|) Du_\epsilon - a(|Du|)Du) \cdot (Du_\epsilon-Du) \geq c |Du_\epsilon-Du|^p - C \epsilon (\mu^2+|Du|^2)^{\frac{q-2}{2}} |Du|^2 
\end{align*}
with constants~$c=c(C_1,p)$ and~$C=C(C_2,q)$ that remain independent of~$\epsilon\in(0,1]$. Eventually, the term on the right-hand side involving the datum~$f$ in~\eqref{est:energylimit} is bounded above by Hölder's and Young's inequality and also by a slice-wise application of Poincaré's inequality. Notice that this is possible due the assumption~$u_\epsilon=u$ on~$\partial_p(\Omega_T\cap Q_{2\rho}(z_0))$. Moreover, we exploit the fact that there holds~$\frac{p}{p-1}<n+2+\sigma$. This way, we obtain 
\begin{align*}
    &\iint_{\Omega_\tau\cap Q_{2\rho}(z_0)} (f_\epsilon-f) \cdot (u_\epsilon-u)\,\dx\dt \\
    &~~\leq \bigg(\iint_{\Omega_\tau\cap Q_{2\rho}(z_0)}|f_\epsilon-f|^{\frac{p}{p-1}}\,\dx\dt \bigg)^{\frac{p-1}{p}} \bigg(\iint_{\Omega_\tau\cap Q_{2\rho}(z_0)}|u_\epsilon-u|^{p}\,\dx\dt \bigg)^{\frac{1}{p}} \\
    &~~\leq C|\Omega_T\cap Q_{2\rho}(z_0)| \bigg(\iint_{\Omega_\tau\cap Q_{2\rho}(z_0)}|f_\epsilon-f|^{\frac{p}{p-1}}\,\dx\dt \bigg)^{\frac{p-1}{p}} \bigg(\iint_{\Omega_\tau\cap Q_{2\rho}(z_0)}|Du_\epsilon-Du|^{p}\,\dx\dt \bigg)^{\frac{1}{p}} \\
    &~~\leq \mbox{$\frac{1}{2}$} c \iint_{\Omega_\tau\cap Q_{2\rho}(z_0)}|Du_\epsilon-Du|^{p}\,\dx\dt + C |\Omega_T\cap Q_{2\rho}(z_0)|^{\frac{p}{p-1}} \iint_{\Omega_\tau\cap Q_{2\rho}(z_0)}|f_\epsilon-f|^{\frac{p}{p-1}}\,\dx\dt
\end{align*}
with a constant~$C=C(C_1,n,p)$. Combining all our estimates and taking the supremum over~$\tau\in\Lambda_{2\rho}(t_0)\setminus\{t_0\}$, the claimed comparison estimate~\eqref{est:energyestimate} follows. 
\end{proof}

%%%%%%%%%%%%%%%%%%%%%%%%%%%%%%%%%%%%%%%%%%%%%%%%%%%%%%%%%%%%%% 

\section{Proof of the main result} \label{sec:hauptresultatbeweis}

In this section, we give the proof of our main result, i.e. of Theorem~\ref{hauptresultat}. We recall that the approximating solutions~$u_\epsilon$ are regular up to the boundary and in particular satisfy
$$ u_\epsilon \in C^3\big(\overline{\Omega}_T \cap Q_{2\rho}(z_0),\R^N\big) $$
for any~$\epsilon\in(0,1]$, as stated in~\eqref{approxsolsmooth} within Section~\ref{subsec:approxequation}. Moreover, according to the reasoning provided in Section~\ref{subsec:approxequation}, the mollified datum~$f_\epsilon$ is smooth by construction for any~$\epsilon\in(0,1]$ and satisfies~$\supp f_\epsilon \Subset \Omega\times\R$. Thus, the~\textit{a priori}~$L^\infty$-gradient estimate~\eqref{est:apriorigradientsuper} from Proposition~\ref{prop:apriorigradientsuper} is at our disposal for~$u_\epsilon$, which we apply to the data~$(b_\epsilon$,~$f_\epsilon$,~$\Omega)$. The quantitative estimate~\eqref{est:apriorigradientsuper} reads as
\begin{align} \label{est:mainresulteins}
  & \sup\limits_{(\Omega_\epsilon)_T \cap Q_\rho(z_0)} |Du_\epsilon| \\
  & \,\,\,\quad \leq C\bigg( \Big( \|f_\epsilon\|^{\frac{(n+2)(n+2+\sigma)}{\sigma}}_{L^{n+2+\sigma}((\Omega_\epsilon)_T\cap Q_{2\rho}(z_0))} \rho^{n+2} + 1 \Big)  \fiint_{(\Omega_\epsilon)_T \cap Q_{2\rho}(z_0)} (1+|Du_\epsilon|^{p}) \,\dx\dt \bigg) ^{\frac{2}{4+(p-q)(n+2)}} \nonumber
\end{align}
with~$C=C(C_1,C_2,N,n,p,q,\sigma,\Theta_{\rho}(x_0))\geq 1$. We note that the constant~$C$ in particular does not depend on~$\epsilon\in(0,1]$, which is an immediate consequence of the comment made right after~\eqref{approxqgrowth}, asserting that~$b_\epsilon$ satisfies a nonstandard~$(p,q)$-growth condition of the type~\eqref{approxgrowth}. From the comparison estimate~\eqref{est:energyestimate} from Lemma~\ref{lem:energyestimate} we infer that~$Du_\epsilon \to Du$ strongly in~$L^p(\Omega_T\cap Q_{2\rho}(z_0),\R^N)$ as~$\downarrow 0$, since there holds~$f_\epsilon \to f$ in~$L^{\frac{p}{p-1}}(\Omega_T\cap Q_{2\rho}(z_0),\R^N)$ as~$\downarrow 0$. Moreover, we may pass to a subsequence~$(\epsilon_i)_{i\in\N}$ to infer that~$Du_{\epsilon_i} \to Du$ a.e. in~$\Omega_T \cap Q_{2\rho}(z_0)$. For the datum~$f_\epsilon$, we furthermore have that~$f_\epsilon \to f$ in~$L^{n+2+\sigma}(\Omega_T\cap Q_{2\rho}(z_0),\R^N)$ in the limit~$\epsilon \downarrow 0$. Consequently, we may pass to the limit~$\epsilon_i \downarrow 0$ on both sides of~\eqref{est:mainresulteins} and use the lower semicontinuity of the~$L^\infty$-norm with respect to a.e. convergence, to deduce that
\begin{align*} 
  &\sup\limits_{\Omega_T \cap Q_{\rho}(z_0)} |Du| \\
  &\qquad\,\,\leq C\bigg( \Big( \|f\|^{\frac{(n+2)(n+2+\sigma)}{\sigma}}_{L^{n+2+\sigma}(\Omega_T\cap Q_{2\rho}(z_0))} \rho^{n+2} + 1 \Big)  \fiint_{\Omega_T \cap Q_{2\rho}(z_0)} (1+|Du|^{p}) \,\dx\dt \bigg) ^{\frac{2}{4+(p-q)(n+2)}} \nonumber
\end{align*}
with~$C=C(C_1,C_2,N,n,p,q,\sigma,\Theta_{\rho}(x_0))\geq 1$. This finishes the proof of Theorem~\ref{hauptresultat}. 

%%%%%%%%%%%%%%%%%%%%%%%%%%%%%%%%%%%%%%%%%%%%%%%%%%%%%%%%%%%%%% 

\nocite{*}
\bibliographystyle{plain}
\bibliography{Boundary_Regularity_Strunk}

%%%%%%%%%%%%%%%%%%%%%%%%%% 

\end{document}